
\documentclass[12pt]{amsart}
\usepackage{amsmath,amsthm,amsfonts,amssymb,mathrsfs}
\date{\today}

\usepackage{color}
\input xymatrix
\xyoption{all}

\usepackage{hyperref}

  \setlength{\textwidth}{18.5truecm}
   \setlength{\textheight}{25.2truecm}
   \setlength{\oddsidemargin}{-28.5pt}
   \setlength{\evensidemargin}{-28.5pt}
   \setlength{\topmargin}{-30pt}


\newtheorem{theorem}{Theorem}[section]

\newtheorem{proposition}[theorem]{Proposition}
\newtheorem{corollary}[theorem]{Corollary}
\newtheorem{lemma}[theorem]{Lemma}
\theoremstyle{definition}
\newtheorem{example}[theorem]{Example}
\newtheorem{remark}[theorem]{Remark}

\newtheorem{definition}[theorem]{Definition}

\begin{document}

\title[On locally compact semitopological $0$-bisimple inverse $\omega$-semigroups]{On locally compact semitopological $0$-bisimple inverse $\omega$-semigroups}

\author{Oleg~Gutik}
\address{Faculty of Mathematics, National University of Lviv,
Universytetska 1, Lviv, 79000, Ukraine}
\email{o\_gutik@lnu.edu.ua,
ovgutik@yahoo.com}

\keywords{Semigroup, semitopological semigroup, topological semigroup, bicyclic monoid, locally compact space, zero, compact ideal, bisimple semigroup, $0$-bisimple semigroup}

\subjclass[2010]{Primary 22A15, 54D45, 54H10; Secondary 54A10, 54D30, 54D40.}

\begin{abstract}
We describe the structure of Hausdorff locally compact semitopological $0$-bisimple inverse $\omega$-semigroups with compact maximal subgroups. In particular, we show that a Hausdorff locally compact semitopological $0$-bisimple inverse $\omega$-semigroup with a compact maximal subgroup is either compact or it is a topological sum of its $\mathscr{H}$-classes. We describe the structure of Hausdorff locally compact semitopological $0$-bisimple inverse $\omega$-semigroups with a monothetic maximal subgroups. We show the following dichotomy: a $T_1$  locally compact semitopological Reilly semigroup $\left(\textbf{B}(\mathbb{Z}_{+},\theta)^0,\tau\right)$ over the additive group of integers $\mathbb{Z}_{+}$, with adjoined zero and with a non-an\-ni\-hi\-la\-ting homomorphism $\theta$: the semigroup $\left(\textbf{B}(\mathbb{Z}_{+},\theta)^0,\tau\right)$ is either compact or discrete. At the end we discuss on the remainder under the closure of the discrete Reilly semigroup $\textbf{B}(\mathbb{Z}_{+},\theta)$ in a semitopological semigroup.
\end{abstract}

\maketitle

\section{Introduction and preliminaries}\label{section-1}

Further we shall follow the terminology of \cite{Carruth-Hildebrant-Koch-1983-1986, Clifford-Preston-1961-1967, Engelking-1989, Ruppert-1984}. By $\mathbb{N}$ (resp. $\mathbb{N}_0$) we denote the set of all positive (resp., non-negative) integers and by $\mathbb{Z}_{+}$ we denote the additive group of integers. We shall say that a set $A$ intersects almost all elements of a infinite family $\mathscr{K}=\{K_i\}_{i\in\Omega}$ of non-empty sets if $A\cap K_i=\varnothing$ only for finitely many elements of $\mathscr{K}$. All topological spaces, considered in this paper, are Hausdorff, if the otherwise is not stated explicitely.  If $A$ is a subset of a topological space $X$  then by $\operatorname{cl}_X(A)$ and $\operatorname{int}_X(A)$ we denote the closure and interior of $A$ in $X$, respectively.

A subset $A$ of a topological space $X$ is called \emph{regular open} if $A=\operatorname{int}_X(\operatorname{cl}_X(A))$.

We recall that a topological space $X$ is called
\begin{itemize}
  \item \emph{semiregular} if $X$ has a base consisting of regular open subsets;
  \item \emph{compact} if each open cover of $X$ has a finite subcover;
  \item \emph{locally compact} if each point of $X$ has an open neighbourhood with the compact closure;
  \item \emph{\v{C}ech-complete} if $X$ is Tychonoff and there exists a compactification $cX$ of $X$ such that the remainder $cX\setminus c(X)$ is an $F_\sigma$-set in $cX$.
\end{itemize}
It is well-known (see \cite[Section~3.9]{Engelking-1989}) and easy to show that every locally compact space is semiregular and \v{C}ech-complete.

Given a semigroup $S$, we shall
denote the set of idempotents of $S$ by $E(S)$.
A semigroup $S$ with the adjoined  zero  will be denoted by
$S^0$  (cf.
\cite{Clifford-Preston-1961-1967}).

A semigroup $S$ is called \emph{inverse} if for every $x\in S$ there exists a unique $x^{-1}\in S$ such that $xyx=x$ and $yxy=y$. In the following this element $y$ we shall denote by $x^{-1}$ and call \emph{inverse of} $x$. A map $\operatorname{\textsf{inv}}\colon S\to S$ which poses every $s\in S$ its inverse is called \emph{inversion}.

The subset of idempotents of a semigroup $S$ we shall denote by $E(S)$. If $S$ is an inverse semigroup then $E(S)$ is closed under multiplication and we shall refer to $E(S)$  as a \emph{band} (or the \emph{band of} $S$). If the band $E(S)$ is non-empty, then the semigroup operation on $S$ determines the following {\em natural partial order}  $\leqslant$ on $E(S)$: $e\leqslant f$ if and only if $ef=fe=e$.  A \emph{semilattice} is a commutative semigroup of idempotents. A semilattice $E$ is called {\em linearly ordered} or a \emph{chain} if its natural order is a linear order. A chain $L$ is called an \emph{$\omega$-chain} if $L$ is isomorphic to $\{0,-1,-2,-3,\ldots\}$ with the semigroup operation $x\cdot y=\min\{x,y\}$, thus the natural partial order on $\{0,-1,-2,-3,\ldots\}$ coincides with the usual order~$\leqslant$.

A \emph{semitopological} (\emph{topological}) \emph{semigroup} is a topological space with a separately continuous (jointly continuous) semigroup operation. An inverse topological semigroup with continuous inversion is called a \emph{topological inverse semigroup}. Also, a \emph{semitopological} (\emph{topological}) \emph{group} is a topological space with a separately continuous (jointly continuous) group operation (and inversion).

A topology $\tau$ on a semigroup $S$ is called:
\begin{itemize}
  \item \emph{semigroup} if $(S,\tau)$ is a topological semigroup;
  \item \emph{shift-continuous} if $(S,\tau)$ is a semitopological semigroup; \; and
  \item \emph{inverse semigroup} if $(S,\tau)$ is a topological inverse semigroup.
\end{itemize}

The \emph{bicyclic semigroup} ${\mathscr{C}}(p,q)$ is the semigroup with
the identity $1$ generated by two elements $p$ and $q$ subject
only to the condition $pq=1$.
The bicyclic monoid is a combinatorial bisimple $F$-inverse semigroup and it plays the
important role in the algebraic theory of semigroups and in the
theory of topological semigroups. For example well-known
Andersen's Theorem~\cite{Andersen-1952} states that\break a~($0$-)simp\-le
semigroup with an idempotent is completely ($0$-)simple if and only if it does not
contain the bicyclic semigroup. Eberhart and Selden showed that the bicyclic semigroup admits only the discrete semigroup topology and if a topological semigroup $S$ contains it as a dense subsemigroup then ${\mathscr{C}}(p,q)$ is an open subset of $S$~\cite{Eberhart-Selden-1969}. Bertman and  West in \cite{Bertman-West-1976} extended these results for semitopological semigroups. In \cite{Bardyla-2016, Bardyla-Gutik-2016, Chuchman-Gutik-2010, Chuchman-Gutik-2011, Fihel-Gutik-2011, {Gutik-2015}, Gutik-Maksymyk-2016??, Gutik-Pozdnyakova-2014, Gutik-Repovs-2011, Gutik-Repovs-2012, Mesyan-Mitchell-Morayne-Peresse-2016} these topologizability results were extended to generalizations of the bicyclic semigroup. Stable and $\Gamma$-compact topological semigroups do not contain the bicyclic semigroup~\cite{Anderson-Hunter-Koch-1965, Hildebrant-Koch-1988}. The problem of an embedding of the bicyclic monoid into compact-like topological semigroups was discussed in \cite{Banakh-Dimitrova-Gutik-2009, Banakh-Dimitrova-Gutik-2010, Gutik-Repovs-2007}.

In \cite{Eberhart-Selden-1969} Eberhart and Selden proved that if the bicyclic monoid ${\mathscr{C}}(p,q)$ is a dense subsemigroup of a topological monoid $S$ and $I=S\setminus{\mathscr{C}}(p,q)\neq\varnothing$ then $I$ is a two-sided ideal of the semigroup $S$. Also, there they described the closure of the bicyclic monoid ${\mathscr{C}}(p,q)$ in a locally compact topological inverse semigroup. The closure of the bicyclic monoid in a countably compact (pseudocompact) topological semigroups was studied in \cite{Banakh-Dimitrova-Gutik-2010}.

In the paper \cite{Gutik-2015} was shown the following dichotomy: a locally compact  semitopological bicyclic semigroup with an  adjoined zero is either compact or discrete.  Also there was shown a counterpart of this result for a locally compact semitopological bicyclic semigroup with an adjoined compact ideal. On the other hand, there was constructed a \v{C}ech-complete first countable topological inverse bicyclic semigroup with adjoined non-isolated zero which is not compact. The above dichotomy was extended by Bardyla in \cite{Bardyla-2016} to locally compact $\lambda$-polycyclic semitopological monoids and to locally compact semitopological interassociates of the bicyclic monoid \cite{Gutik-Maksymyk-2016}.

Let $S$ be a monoid and $\theta\colon S\to H_S(1)$ a homomorphism from $S$ into the group of units $H_S(1)$ of $S$. The set $\mathbb{N}_0\times S\times \mathbb{N}_0$ with the semigroup operation
\begin{equation*}
  (i, s, j)\cdot(m,t,n)=\left(i+m-\min\{j,m\},\theta^{m-\min\{j,m\}}(s)\theta^{j-\min\{j,m\}}(t),j+n-\min\{j,m\}\right),
\end{equation*}
where  $i,j,m,n\in \mathbb{N}_0$, $s,t\in S$ and $\theta^0$ is an identity map on $S$, is called the \emph{Bruck--Reilly extension} of the monoid $S$ and it will be denoted by $\textbf{BR}(S,\theta)$. We observe that if $S$ is a trivial monoid (i.e., $S$ is a singleton set), then $\textbf{BR}(S,\theta)$ is isomorphic to the bicyclic semigroup  ${\mathscr{C}}(p,q)$ and in case when $\theta\colon S\to H_S(1)$ is the annihilating homomorphism (i.e., $\theta(s)=1_S$ for all $s\in S$, where $1_S$ is the identity of the monoid $S$), then $\textbf{BR}(S,\theta)$ is called the \emph{Bruck semigroup} over the monoid $S$ \cite{Bruck-1958}.

For arbitrary $i,j\in \mathbb{N}_0$ and non-empty subset $A$ of the semigroup $S$ we define subsets $A_{i,j}$ and $A_{i,j}^0$ of $\textbf{BR}(S,\theta)$ as follows: $A_{i,j}=\left\{(i, s, j)\colon s\in A\right\}$ and $A_{i,j}^0=A_{i,j}\cup\left\{(i+1,1_S,j+1)\right\}$.

A regular semigroup $S$ is \emph{$\omega$-semigroup} if $E(S)$ is isomorphic to the $\omega$-chain and $S$ is \emph{bisimple} if $S$ constitutes a single $\mathscr{D}$-class.

The construction of the Bruck semigroup over a monoid was used in \cite{Bruck-1958} for the proof of the statement that every semigroup embeds into a simple monoid. Also Reilly and Warne prove that every bisimple regular $\omega$-semigroup $S$ is isomorphic to the Bruck--Reilly extension of some group \cite{Reilly-1966, Warne-1966}, i.e., for every bisimple regular $\omega$-semigroup $S$ there exists a group $G$ and a homomorphism $\theta\colon G\to G$ such that $S$ is isomorphic to the Bruck--Reilly extension $\textbf{BR}(G,\theta)$. A bisimple regular $\omega$-semigroup is called Reilly semigroup \cite{Petrich-1984}. Later similar as in the Petrich book \cite{Petrich-1984}, the Reilly semigroup $\textbf{BR}(G,\theta)$ we shall denote by $\textbf{B}(G,\theta)$.

Gutik constructed a topological counterpart of the Bruck construction in \cite{Gutik-1994}. Also in \cite{Gutik-1994, Gutik-1997} he studied the problem of a topologization of the Bruck semigroup over a topological semigroup. A.~Selden described the structure of locally compact bisimple regular topological $\omega$-semigroups and studied the closures of such semigroups in locally compact topological semigroups in \cite{Selden-1975, Selden-1976, Selden-1977}.
In the paper \cite{Gutik-Pavlyk-2009} topologizations of the Bruck--Reilly extension $\textbf{BR}(S,\theta)$ of a semitopological semigroup $S$ which turns $\textbf{BR}(S,\theta)$ into a semitopological semigroup are studied.

An inverse semigroup $S$ is called a \emph{$0$-bisimple $\omega$-semigroup} if $S$ has two $\mathscr{D}$-classes: $S\setminus \{0\}$ and $\{0\}$, and  $E(S)\setminus \{0\}$ is order isomorphic to the $\omega$-chain. The results of the Lallement and Petrich paper \cite{Lallement-Petrich-1969} imply that every $0$-bisimple inverse $\omega$-semigroup is isomorphic to a Reilly semigroup with adjoined zero $\textbf{B}(G,\theta)^0$.

In this paper we describe the structure of locally compact semitopological $0$-bisimple inverse $\omega$-semigroups with a compact maximal subgroup. In particular, we show that a locally compact semitopological $0$-bisimple inverse $\omega$-semigroup with a compact maximal subgroup is either compact or topologically isomorphic to the topological sum of its $\mathscr{H}$-classes. We describe the structure of locally compact semitopological $0$-bisimple inverse $\omega$-semigroups with a monothetic maximal subgroups. In particular we prove the dichotomy that a locally compact $T_1$-semitopological Reilly semigroup $\left(\textbf{B}(\mathbb{Z}_{+},\theta)^0,\tau\right)$ over the additive group of integers $\mathbb{Z}_{+}$, with adjoined zero and with a non-an\-ni\-hi\-la\-ting homomorphism $\theta$: the semigroup $\left(\textbf{B}(\mathbb{Z}_{+},\theta)^0,\tau\right)$ is either compact or discrete. At the end we discuss on the remainder under the closure of the discrete Reilly semigroup $\textbf{B}(\mathbb{Z}_{+},\theta)$ in a semitopological semigroup.

\section{On a topological Bruck--Reilly extension of a semitopological monoid}\label{section-2}

Later we need the following proposition  which is a simple generalization of Lemma~1.2 from \cite{Munn-Reilly-1966} and follows from Theorem~XI.1.1 of \cite{Petrich-1984}.

\begin{proposition}\label{proposition-2.1}
Let $S$ be an arbitrary monoid and  $\theta\colon S\to H_S(1)$ a homomorphism from $S$ into the group of units $H_S(1)$ of $S$. Then a map \emph{$\eta\colon \textbf{BR}(S,\theta)\to \mathscr{C}(p,q)$}, $(i, s, j)\mapsto q^ip^j$ is a homomorphism and hence the relation $\eta^\natural$ on \emph{$\textbf{BR}(S,\theta)$} defined in the following way
\begin{equation*}
  (i, s, j)\eta^\natural(m,t,n) \qquad \hbox{if and only if} \qquad i=m \quad \hbox{and} \quad j=n
\end{equation*}
is a congruence.
\end{proposition}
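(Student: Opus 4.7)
The plan is to verify the homomorphism property directly from the multiplication rules and then invoke the standard fact that the kernel of a homomorphism is a congruence.

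First I would recall the normal form in the bicyclic monoid. Every element of $\mathscr{C}(p,q)$ can be written uniquely as $q^ip^j$ with $i,j\in N$, and since $pq=1$, one has
\begin{equation*}
q^ip^j\cdot q^mp^n = q^{i+m-\min\{j,m\}}p^{j+n-\min\{j,m\}},
\end{equation*}
which is obtained by cancelling $\min\{j,m\}$ pairs $pq$ in the middle of $q^ip^jq^mp^n$. I would also note that this normal form implies $q^ip^j=q^mp^n$ if and only if $i=m$ and $j=n$.

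Next I would check that $\eta$ respects the operation. Apply $\eta$ to the Bruck--Reilly product formula given in the excerpt: since $\eta$ forgets the middle $S$-coordinate,
\begin{equation*}
\eta\bigl((i,s,j)(m,t,n)\bigr)=q^{i+m-\min\{j,m\}}p^{j+n-\min\{j,m\}}.
\end{equation*}
On the other hand, using the displayed normal-form product,
\begin{equation*}
\eta(i,s,j)\cdot\eta(m,t,n)=q^ip^j\cdot q^mp^n=q^{i+m-\min\{j,m\}}p^{j+n-\min\{j,m\}}.
\end{equation*}
The two expressions coincide, so $\eta$ is a homomorphism. This is the entire content of the first assertion; no subtlety arises because the middle $S$-coordinate, on which the twisting by $\theta$ acts, is simply discarded by $\eta$.

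For the second assertion, recall that for any semigroup homomorphism $\varphi\colon T_1\to T_2$ the kernel relation $\ker\varphi=\{(x,y)\colon\varphi(x)=\varphi(y)\}$ is a congruence on $T_1$. Apply this to $\eta$: by the uniqueness of the normal form in $\mathscr{C}(p,q)$, one has $\eta(i,s,j)=\eta(m,t,n)$ iff $q^ip^j=q^mp^n$ iff $i=m$ and $j=n$. Thus $\eta^\natural=\ker\eta$, and the claim follows. There is no real obstacle here; the proposition is a routine verification, and the only step that requires a moment's attention is writing out the normal-form product in $\mathscr{C}(p,q)$ to see that it matches the first and third components of the Bruck--Reilly multiplication.
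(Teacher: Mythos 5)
Your proof is correct: the computation of the normal-form product in $\mathscr{C}(p,q)$ matches the first and third coordinates of the Bruck--Reilly multiplication, and identifying $\eta^\natural$ with $\ker\eta$ settles the congruence claim. The paper itself gives no proof, merely citing Lemma~1.2 of Munn--Reilly and Theorem~XI.1.1 of Petrich, and your direct verification is exactly the routine argument behind those references, so nothing further is needed.
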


\begin{proposition}\label{proposition-2.2}
Let \emph{$\tau_{\textbf{BR}}$} be a topology on \emph{$\textbf{BR}(S,\theta)$} such that the set $S_{i,j}$ is open in \emph{$\left(\textbf{BR}(S,\theta),\tau_{\textbf{BR}}\right)$} for all $i,j\in \mathbb{N}_0$. Then $\eta^\natural$ is a closed congruence on \emph{$\left(\textbf{BR}(S,\theta),\tau_{\textbf{BR}}\right)$}.
\end{proposition}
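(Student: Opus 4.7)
The plan is to show directly that $\eta^\natural$, viewed as a subset of $\textbf{BR}(S,\theta)\times \textbf{BR}(S,\theta)$ equipped with the product topology, is closed by exhibiting its complement as a union of basic open sets.

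First I would record the structural observation that the $\eta^\natural$-equivalence classes coincide with the sets $S_{i,j}$ for $(i,j)\in N\times N$, and that these sets partition $\textbf{BR}(S,\theta)$. Since each $S_{i,j}$ is open by hypothesis, and
\begin{equation*}
  \textbf{BR}(S,\theta)\setminus S_{i,j}=\bigcup_{(m,n)\neq (i,j)} S_{m,n}
\end{equation*}
is also open as a union of open sets, each $S_{i,j}$ is in fact clopen. This fact is not strictly needed for the conclusion but makes the product-topology argument transparent.

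Next I would pick an arbitrary pair $\bigl((i,s,j),(m,t,n)\bigr)$ lying in the complement of $\eta^\natural$, which by definition means $(i,j)\neq (m,n)$. The product $S_{i,j}\times S_{m,n}$ is then an open neighbourhood of this pair in $\textbf{BR}(S,\theta)\times \textbf{BR}(S,\theta)$. For any $(i',s',j')\in S_{i,j}$ and $(m',t',n')\in S_{m,n}$ one has $i'=i$, $j'=j$, $m'=m$, $n'=n$, so $(i',j')\neq (m',n')$ and the pair fails to lie in $\eta^\natural$. Hence $S_{i,j}\times S_{m,n}$ is contained in the complement of $\eta^\natural$.

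Consequently the complement of $\eta^\natural$ is a union of open rectangles, hence open, so $\eta^\natural$ is closed in the product topology. I do not anticipate a serious obstacle: the only content of the argument is that the open sets $S_{i,j}$ separate distinct $\eta^\natural$-classes in the product. The argument does not use anything about the semigroup operation or the homomorphism $\theta$; it uses only the hypothesis that the sets $S_{i,j}$ are open, together with Proposition~\ref{proposition-2.1} to guarantee that $\eta^\natural$ is a congruence in the first place.
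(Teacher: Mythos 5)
Your argument is correct and is essentially the paper's own proof: both exhibit, for a non-$\eta^\natural$-equivalent pair, the disjoint open neighbourhoods $S_{i,j}$ and $S_{m,n}$, so that $S_{i,j}\times S_{m,n}$ misses the relation, making the complement of $\eta^\natural$ open in the product topology, with Proposition~\ref{proposition-2.1} supplying the congruence property. You merely spell out the product-topology step (and the unneeded clopenness remark) that the paper leaves implicit.
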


\begin{proof}
Fix an arbitrary non-$\eta^\natural$-equivalent elements $(i, s, j),(m,t,n)\in\textbf{BR}(S,\theta)$. Then the definition of the relation $\eta^\natural$ implies that $S_{i,j}\cap S_{m,n}=\varnothing$. Since $S_{i,j}$ and $S_{m,n}$ are open disjoint neighbourhoods of the points $(i, s, j)$ and $(m,t,n)$ in the topological space $\left(\textbf{BR}(S,\theta),\tau_{\textbf{BR}}\right)$, Proposition~\ref{proposition-2.1} implies that $\eta^\natural$ is a closed congruence on \emph{$\left(\textbf{BR}(S,\theta),\tau_{\textbf{BR}}\right)$}.
\end{proof}

\begin{definition}\label{definition-2.3}
Let $\mathcal{C}$ be a class of semitopological semigroups and $(S,\tau_S)\in\mathcal{C}$. If $\tau_{\textbf{BR}}$ is a topology on $\textbf{BR}(S,\theta)$ such that $\left(\textbf{BR}(S,\theta),\tau_{\textbf{BR}}\right)\in\mathcal{C}$ and for some $i\in \mathbb{N}_0$ the subsemigroup $S_{i,i}$ with the topology restricted from $\left(\textbf{BR}(S,\theta),\tau_{\textbf{BR}}\right)$ is topologically isomorphic to $(S,\tau_S)$ under the map $\xi_i\colon S_{i,i}\to S$, $(i,s,i)\mapsto s$, then $\left(\textbf{BR}(S,\theta),\tau_{\textbf{BR}}\right)$ is called a \emph{topological Bruck--Reilly extension} of $(S,\tau_S)$ in the class $\mathcal{C}$.
\end{definition}

For all non-negative integers $i,j,m,n$ and an arbitrary Bruck--Reilly extension $\textbf{BR}(S,\theta)$ we define a map $\phi^{i,j}_{m,n}\colon \textbf{BR}(S,\theta)\to \textbf{BR}(S,\theta)$, $x\mapsto (m,1_S,i)\cdot x \cdot (j,1_S,n)$.

\begin{proposition}\label{proposition-2.4}
Let \emph{$\left(\textbf{BR}(S,\theta),\tau_{\textbf{BR}}\right)$} be a topological Bruck--Reilly extension of a semitopological semigroup $(S,\tau_S)$ in the class of semitopological semigroups. Then the following assertions hold:
\begin{itemize}
  \item[$(i)$] for all non-negative integers $i,j,m,n$ the restrictions $\phi^{i,j}_{m,n}|_{S_{i,j}}\colon S_{i,j}\to S_{m,n}$  and $\phi^{m,n}_{i,j}|_{S_{m,n}}\colon S_{m,n}\to S_{i,j}$  are mutually invertible homeomorphisms between subspaces $S_{i,j}$ and \emph{$S_{m,n}$ of $\left(\textbf{BR}(S,\theta),\tau_{\textbf{BR}}\right)$};

  \item[$(ii)$] for all non-negative integers $i,m$ the restrictions $\phi^{i,i}_{m,m}|_{S_{i,i}}\colon S_{i,i}\to S_{m,m}$ and $\phi^{m,m}_{i,i}|_{S_{m,m}}\colon S_{m,m}\to S_{i,i}$ are mutually invertible topological isomorphisms between semitopological subsemigroups $S_{i,i}$ and $S_{m,m}$ of \emph{$\left(\textbf{BR}(S,\theta),\tau_{\textbf{BR}}\right)$};
  \item[$(iii)$] the homomorphism $\theta\colon S\to H_S(1)$ is a continuous map;
  \item[$(iv)$] for all non-negative integers $i,j,m,n$ the restrictions $\phi^{i,j}_{m,n}|_{S_{i,j}^0}\colon S_{i,j}^0\to S_{m,n}^0$  and $\phi^{m,n}_{i,j}|_{S_{m,n}^0}\colon S_{m,n}^0\to S_{i,j}^0$  are mutually invertible homeomorphisms between subspaces $S_{i,j}^0$ and $S_{m,n}^0$ of \emph{$\left(\textbf{BR}(S,\theta),\tau_{\textbf{BR}}\right)$};

  \item[$(v)$] if $\theta\colon S\to H_S(1)$ is an annihilating homomorphism then for all non-negative integers $i,m$ the restrictions $\phi^{i,i}_{m,m}|_{S_{i,i}^0}\colon S_{i,i}^0\to S_{m,m}^0$ and $\phi^{m,m}_{i,i}|_{S_{m,m}^0}\colon S_{m,m}^0\to S_{i,i}^0$ are mutually invertible topological isomorphisms between semitopological subsemigroups $S_{i,i}^0$ and $S_{m,m}^0$ of \emph{$\left(\textbf{BR}(S,\theta),\tau_{\textbf{BR}}\right)$};
  \item[$(vi)$] for every positive integer $i$ the left and right shifts \emph{$\rho_{(i,1_S,i)}\colon \textbf{BR}(S,\theta)\to \textbf{BR}(S,\theta)$,  $x\mapsto x\cdot (i,1_S,i)$} and \emph{$\lambda_{(i,1_S,i)}\colon \textbf{BR}(S,\theta)\to \textbf{BR}(S,\theta)$, $x\mapsto(i,1_S,i)\cdot x$} are retractions, and hence \emph{$(i,1_S,i)\textbf{BR}(S,\theta)$}, \emph{$\textbf{BR}(S,\theta)(i,1_S,i)$} are closed subsets, and hence $S_{0,0}$ is an open subset of \emph{$\left(\textbf{BR}(S,\theta),\tau_{\textbf{BR}}\right)$};

  \item[$(vii)$] for every positive integer $i$ the sets $S_{0,i}$ and $S_{i,0}$ are open in \emph{$\left(\textbf{BR}(S,\theta),\tau_{\textbf{BR}}\right)$};

  \item[$(viii)$] for any non-negative integer $k$ and for every positive integer $i$ the sets $S_{0,k}\cup S_{1,k+1}\cup\cdots\cup S_{i,k+i}$ and $S_{k,0}\cup S_{k+1,1}\cup\cdots\cup S_{k+i,i}$ are open in \emph{$\left(\textbf{BR}(S,\theta),\tau_{\textbf{BR}}\right)$}.
\end{itemize}
\end{proposition}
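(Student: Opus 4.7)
The plan is to combine separate continuity of multiplication with a few routine computations in the Bruck--Reilly product formula. The key identity is that $(m,1_S,i)\cdot(i,s,j)\cdot(j,1_S,n) = (m,s,n)$ (the two $\min$ terms trivialise and $\theta^0=\mathrm{id}$), together with the fact that $\theta(1_S)=1_S$ since $\theta$ is a homomorphism into a group. This at once handles items~$(i)$ and~$(iv)$: the maps in question are restrictions of compositions of left and right shifts, hence continuous by separate continuity, and the identity shows they are mutually inverse bijections. For~$(iv)$ one additionally checks that $(m,1_S,i)\cdot(i+1,1_S,j+1)\cdot(j,1_S,n)=(m+1,1_S,n+1)$, so the extra ``zero-like'' point is sent to its analogue. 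In the diagonal case~$(ii)$ the map is also a semigroup homomorphism, since $(i,s,i)(i,t,i)=(i,st,i)$; for~$(v)$ the annihilation hypothesis is essential to make $S_{i,i}^0$ a subsemigroup, because $(i,s,i)(i+1,1_S,i+1)=(i+1,\theta(s),i+1)$ lies in $S_{i,i}^0$ only when $\theta(s)=1_S$.

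For~$(iii)$, I would compute $(0,1_S,1)(0,s,0)=(0,\theta(s),1)$ and then transport back to $S_{0,0}$ via the homeomorphism $\phi^{0,1}_{0,0}$ from~$(i)$; the map $\theta$ thus factors as a composition of continuous maps $S\to S_{0,0}\to S_{0,1}\to S_{0,0}\to S$ (using that, by~$(ii)$, the topological isomorphism $\xi_{i_0}$ granted by the definition forces $\xi_0$ to be a topological isomorphism as well). For~$(vi)$, the element $(i,1_S,i)$ is idempotent by direct computation, so the shifts by it are continuous idempotent selfmaps, i.e.\ retractions; retracts of Hausdorff spaces are closed (they are the equaliser of the retraction with the identity), giving the two closedness claims. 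A short case analysis on $\min\{b,1\}$ then shows $\textbf{BR}(S,\theta)\cdot(1,1_S,1)=\{(a,s,b):b\ge 1\}$ and symmetrically $(1,1_S,1)\cdot\textbf{BR}(S,\theta)=\{(a,s,b):a\ge 1\}$; the complement of their union is exactly $S_{0,0}$. For~$(vii)$, an analogous case analysis yields $\rho_{(i,1_S,0)}^{-1}(S_{0,0})=S_{0,i}$ and $\lambda_{(0,1_S,i)}^{-1}(S_{0,0})=S_{i,0}$, so openness propagates.

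The main obstacle is~$(viii)$: the obvious cut-outs using principal ideals only give the full ``square'' $\bigcup_{0\le a,b\le i}S_{a,b}$, not a diagonal stripe, because one-sided multiplication cannot detect the equality $a=b$. The remedy is the two-sided sandwich $\psi_i(x):=(0,1_S,i)\cdot x\cdot(i,1_S,0)$, continuous by separate continuity. Branching the Bruck--Reilly computation on whether $a\le i$ or $a>i$ and on whether $b\ge a$ or $b<a$, one checks that the four cases all collapse to the single condition $a=b$ and $a\le i$; thus $\psi_i^{-1}(S_{0,0})=\{(a,s,a):0\le a\le i\}=S_{0,0}\cup\cdots\cup S_{i,i}$, establishing the case $k=0$ of both stripes. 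The shifted stripe $S_{0,k}\cup\cdots\cup S_{i,k+i}$ is then the preimage of this diagonal under $\rho_{(k,1_S,0)}$, and the transverse stripe $S_{k,0}\cup\cdots\cup S_{k+i,i}$ the preimage under $\lambda_{(0,1_S,k)}$, by the same kind of case analysis, which delivers the two remaining open sets.
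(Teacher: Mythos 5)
Your proposal is correct, and for items $(i)$--$(vi)$ it follows essentially the paper's own route: the sandwich identity $(m,1_S,i)\cdot(i,s,j)\cdot(j,1_S,n)=(m,s,n)$ plus separate continuity for $(i)$, $(ii)$, $(iv)$, $(v)$ (with the extra computation sending $(i+1,1_S,j+1)$ to $(m+1,1_S,n+1)$, and the annihilation hypothesis making $S_{i,i}^0$ a subsemigroup, exactly as in the paper); transporting $\theta$ through the blocks and using that $(ii)$ upgrades the single $\xi_{i_0}$ of the definition to all $\xi_i$ for $(iii)$ (the paper uses the right shift by $(i+1,1_S,i+1)$ and the diagram $\theta=\xi_{i+1}\circ\rho_{(i+1,1_S,i+1)}\circ\xi_i^{-1}$, you use $(0,1_S,1)(0,s,0)=(0,\theta(s),1)$ followed by $\phi^{0,1}_{0,0}$ --- same idea); and for $(vi)$ the idempotent-shift retraction together with the closed-equaliser argument in a Hausdorff space, which is precisely the paper's appeal to Exercise~1.5.C of Engelking. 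Where you genuinely diverge is in $(vii)$ and $(viii)$. The paper proves $(vii)$ by contradiction at a point of $S_{0,i}$, using a neighbourhood $U$ with $U\cdot(i,1_S,0)\subseteq S_{0,0}$ and the computation $(0,t,j)\cdot(i,1_S,0)\notin S_{0,0}$ for $j<i$; and it proves $(viii)$ by a neighbourhood-shrinking argument (choosing $V$ inside the complement of two principal ideals so that $V\cdot(i,1_S,0)\subseteq S_{k+i,0}$ forces $m=k+n$) followed by induction along the stripe. You instead exhibit the relevant sets directly as preimages of the open set $S_{0,0}$ from $(vi)$: $S_{0,i}=\rho_{(i,1_S,0)}^{-1}(S_{0,0})$, $S_{i,0}=\lambda_{(0,1_S,i)}^{-1}(S_{0,0})$, and the diagonal stripe $S_{0,0}\cup\cdots\cup S_{i,i}$ as $\psi_i^{-1}(S_{0,0})$ for the two-sided sandwich $\psi_i(x)=(0,1_S,i)\cdot x\cdot(i,1_S,0)$, the shifted stripes then being preimages of this stripe under $\rho_{(k,1_S,0)}$ and $\lambda_{(0,1_S,k)}$; I checked the case analyses (the four branches of $\psi_i$ do collapse to the condition $a=b\leqslant i$), so this is sound. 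Your route buys a shorter, contradiction-free and induction-free proof of $(vii)$--$(viii)$ that isolates the one geometric input (openness of $S_{0,0}$), while the paper's version packages the same multiplication formulas as local neighbourhood estimates; both rest on identical ingredients, so the difference is one of economy rather than of substance.
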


\begin{proof}
Assertions $(i)$ and $(ii)$ follow from the definition of the semigroup $\textbf{BR}(S,\theta)$ and the separate continuity of the semigroup operation in $\left(\textbf{BR}(S,\theta),\tau_{\textbf{BR}}\right)$.

$(iii)$ Since $\left(i,s,i\right)\cdot\left(i+1,1_S,i+1\right)=\left(i+1,\theta(s),i+1\right)$ for any $s\in S$, assertions $(i)$ and $(ii)$, and the separate continuity of the semigroup operation of $\left(\textbf{BR}(S,\theta),\tau_{\textbf{BR}}\right)$ imply that the homomorphism $\theta\colon S\to H_S(1)$ is a continuous map, because the following diagram
\begin{equation*}
\xymatrix{
S_{i,i}\ar[rrr]^{\rho_{(i+1,1_S,i+1)}} &&& S_{i+1,i+1}\ar[dd]^{\xi_{i+1}}\\
&&&\\
S\ar[rrr]_{\theta}\ar[uu]^{\xi_i^{-1}} &&& S
}
\end{equation*}
commutes, where $\rho_{(i+1,1_S,i+1)}$ is the right shift on the semigroup $\textbf{BR}(S,\theta)$ by the element $(i+1,1_S,i+1)$..

$(iv)$ Since $\left(n,1_S,i\right)(i+1,1_S, j+1)(j,1_S,m)=(n+1,1_S,m+1)$,
assertion $(iv)$ follows from the definition of the semigroup $\textbf{BR}(S,\theta)$, the separate continuity of the semigroup operation in $\left(\textbf{BR}(S,\theta),\tau_{\textbf{BR}}\right)$ and $(iii)$.

$(v)$ If $\theta\colon S\to H_S(1)$ is the annihilating homomorphism then it is obvious that $S_{i,i}^0$ is a subsemigroup of $\textbf{BR}(S,\theta)$ for any non-negative integer $i$ such that $(i+1,1_S,i+1)$ is zero of $S_{i,i}^0$, and next we apply item $(iv)$.

$(vi)$ The semigroup operation of $\textbf{BR}(S,\theta)$ implies that
\begin{equation*}
  (i,1_S,i)\cdot(m,s,n)=\left(m, s,n\right) \quad \hbox{and} \quad (m,s,n)\cdot(i,1_S,i)=\left(m, s,n\right), \qquad \hbox{for all integers} \quad m,n\geqslant i,
\end{equation*}
and
\begin{equation*}
\begin{split}
  (i,1_S,i)\cdot(m,s,n)= & \;(i,\theta^{i-m}(s),i+n-m)\neq\left(m, s,n\right) \quad \hbox{and} \\
    (m,s,n)\cdot(i,1_S,i)= &\;(i-n+m,\theta^{i-n}(s),i)\neq\left(m, s,n\right), \qquad \hbox{for all non-negative integers} \quad m,n< i.
\end{split}
\end{equation*}
Then the above arguments, \cite[Exercise.~1.5.C]{Engelking-1989}, Hausdorffness of $\left(\textbf{BR}(S,\theta),\tau_{\textbf{BR}}\right)$ and the separate continuity of the semigroup operation of $\left(\textbf{BR}(S,\theta),\tau_{\textbf{BR}}\right)$ imply our assertion.

$(vii)$ Item $(vi)$ implies that $S_{0,0}\cup S_{0,1}\cup\cdots\cup S_{0,i}$ and $S_{0,0}\cup S_{1,0}\cup\cdots\cup S_{i,0}$ are open subsets of $\left(\textbf{BR}(S,\theta),\tau_{\textbf{BR}}\right)$. Then  $S_{0,i}=(S_{0,0}\cup S_{0,1}\cup\cdots\cup S_{0,i})\cap \rho_{(i,1_S,0)}^{-1}(S_{0,0})$ is an open subset of $\left(\textbf{BR}(S,\theta),\tau_{\textbf{BR}}\right)$, as well. Similar arguments show that $S_{i,0}$ is an open subset of $\left(\textbf{BR}(S,\theta),\tau_{\textbf{BR}}\right)$, too.

$(viii)$ Fix an arbitrary non-negative integer $k$, an arbitrary positive integer $i$ and any element $(k+i,s,i)\in S_{k+i,i}$. Then  $(k+i,s,i)\cdot(i,1_S,0)=(k+i,s,0)$ and hence the separate continuity of the semigroup operation in $\left(\textbf{BR}(S,\theta),\tau_{\textbf{BR}}\right)$ and assertion  $(vii)$ imply that there exists an open neighbourhood $V$ of the point $(k+i,s,i)$ in $\left(\textbf{BR}(S,\theta),\tau_{\textbf{BR}}\right)$ such that $V\cdot (i,1_S,0)\subseteq S_{k+i,0}$. Without loss of generality we may assume that
\begin{equation*}
V\subseteq \textbf{BR}(S,\theta)\setminus\left((k+i+1,1_S,k+i+1)\textbf{BR}(S,\theta)\cup \textbf{BR}(S,\theta)(i+1,1_S,i+1)\right).
\end{equation*}
Fix an arbitrary $(m,t,n)\in V$. Then we have that $m\leqslant k+i$ and $n\leqslant i$. This implies that
\begin{equation*}
  (m,t,n)\cdot(i,1_S,0)=\left(m+i-n,\theta^{i-n}(t),0\right)\in S_{k+i,0},
\end{equation*}
and hence $m=k+n$. This implies that $S_{k,0}\cup S_{k+1,1}\cup\cdots\cup S_{k+i,i}=\rho_{(i,1_S,0)}(S_{k+i,0})$ is an open subset of $\left(\textbf{BR}(S,\theta),\tau_{\textbf{BR}}\right)$, because all shifts are continuous in $\left(\textbf{BR}(S,\theta),\tau_{\textbf{BR}}\right)$. The proof of openness of the set $S_{0,k}\cup S_{1,k+1}\cup\cdots\cup S_{i,k+i}$ in $\left(\textbf{BR}(S,\theta),\tau_{\textbf{BR}}\right)$ is similar.
\end{proof}

Proposition~\ref{proposition-2.4} and Theorem~3.3.8 of \cite{Engelking-1989} imply the following

\begin{corollary}\label{corollary-2.5}
Every semitopological bisimple inverse $\omega$-semigroup $S$ is topologically isomorphic to a topological Bruck--Reilly extension \emph{$\left(\textbf{B}(G,\theta),\tau_{\textbf{B}}\right)$} of a semitopological group $(G,\tau_G)$ in the class of semitopological semigroups. Moreover, if $S$ is locally compact then $(G,\tau_G)$ is a locally compact topological group.
\end{corollary}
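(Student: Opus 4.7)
The plan is to combine the Reilly--Warne algebraic structure theorem with the topological information in Proposition~\ref{proposition-2.4} to reconstruct $S$ as a topological Bruck--Reilly extension, reading the topology on the maximal subgroup off as the subspace topology on the open $\mathscr{H}$-class of an idempotent.

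First, by the Reilly--Warne theorem cited in Section~\ref{section-1}, there exist a group $G$ and a homomorphism $\theta \colon G\to G$ together with an algebraic isomorphism $S \cong \textbf{B}(G,\theta)$. Transporting $\tau_S$ across this isomorphism yields a Hausdorff semitopological semigroup topology $\tau_{\textbf{B}}$ on $\textbf{B}(G,\theta)$ with $S$ topologically isomorphic to $\left(\textbf{B}(G,\theta),\tau_{\textbf{B}}\right)$. By Proposition~\ref{proposition-2.4}$(vi)$ (applied with the monoid being the group $G$), the subset $G_{0,0}$ is open in $\left(\textbf{B}(G,\theta),\tau_{\textbf{B}}\right)$. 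Define $\tau_G$ on $G$ to be the unique topology for which $\xi_0 \colon G_{0,0}\to G$, $(0,g,0)\mapsto g$, is a homeomorphism. Then $\tau_G$ is Hausdorff, and since $G_{0,0}$ is an open subsemigroup on which multiplication reads $(0,g,0)(0,h,0)=(0,gh,0)$, the separate continuity of multiplication on $\left(\textbf{B}(G,\theta),\tau_{\textbf{B}}\right)$ restricts to separate continuity of multiplication on $(G,\tau_G)$. Thus $(G,\tau_G)$ is a Hausdorff semitopological group, and Definition~\ref{definition-2.3} is verified for $i=0$; this gives the first assertion.

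For the moreover part, suppose $S$ is locally compact. Then $\left(\textbf{B}(G,\theta),\tau_{\textbf{B}}\right)$ is locally compact Hausdorff, so by Theorem~3.3.8 of \cite{Engelking-1989} its open subspace $G_{0,0}$ is locally compact, and hence so is $(G,\tau_G)$. Invoking Ellis' theorem -- a locally compact Hausdorff semitopological group is automatically a topological group -- we conclude that $(G,\tau_G)$ is a locally compact topological group.

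The one point requiring care is the exploitation of the \emph{openness} of $G_{0,0}$ rather than merely its presence as a subspace: openness both produces a bona fide semitopological group structure on $G$ from separate continuity on the whole Bruck--Reilly extension, and allows local compactness to descend to $G_{0,0}$. With Proposition~\ref{proposition-2.4}$(vi)$ supplying this openness, the remaining verifications are routine bookkeeping against Definition~\ref{definition-2.3}.
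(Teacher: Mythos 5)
Your proposal is correct and follows essentially the same route as the paper: the paper's proof is precisely the combination of the Reilly--Warne structure theorem, Proposition~\ref{proposition-2.4} (in particular the openness of $S_{0,0}$ from item $(vi)$), Theorem~3.3.8 of \cite{Engelking-1989} for local compactness of the open $\mathscr{H}$-class, and Ellis' theorem for the ``moreover'' part, exactly as you argue.
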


We observe that the continuity of the group operation and inversion in a locally compact semitopological group $(G,\tau_G)$ follows from well-known Ellis' Theorem (see \cite[Theorem~2]{Ellis-1957}).

The following corollary describes the structure of $0$-bisimple inverse $\omega$-semigroups and it follows from Theorem~4.2 of \cite{Munn-1970} (also see corresponding statements in \cite{Lallement-Petrich-1969} and \cite{McAlister-1974}).

\begin{corollary}\label{corollary-2.6}
Every $0$-bisimple inverse $\omega$-semigroup $S$ is isomorphic to a Reilly semigroup with adjoined zero \emph{$\textbf{B}(G,\theta)^0=\textbf{B}(G,\theta)\sqcup\{\textsf{0}\}$}.
\end{corollary}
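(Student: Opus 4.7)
The plan is to reduce the statement to the Reilly--Warne theorem (recalled in the excerpt) by peeling off the adjoined zero. Writing $S' := S \setminus \{\textsf{0}\}$, I would check that $S'$ is itself a bisimple regular $\omega$-semigroup; the conclusion then follows by applying Reilly--Warne to $S'$ and adjoining $\textsf{0}$ back to obtain $S \cong \textbf{B}(G,\theta)^0$.

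The first step, and the only delicate one, is to show $S'$ is closed under multiplication. Given non-zero $a,b \in S$, the inverse-semigroup identity $ab = a(a^{-1}a)(bb^{-1})b$ and the sandwich relation $a^{-1}(ab)b^{-1} = (a^{-1}a)(bb^{-1})$ show that $ab = \textsf{0}$ if and only if $(a^{-1}a)(bb^{-1}) = \textsf{0}$. But $a^{-1}a$ and $bb^{-1}$ are non-zero idempotents (note $a^{-1} = \textsf{0}$ would force $a = aa^{-1}a = \textsf{0}$), and since $E(S)\setminus\{\textsf{0}\}$ is linearly ordered, their product equals the smaller of them and is therefore non-zero. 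Hence $ab \neq \textsf{0}$, and $S'$ is a subsemigroup of $S$.

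Next I would verify that $S'$ is an inverse semigroup with $E(S') = E(S)\setminus\{\textsf{0}\}$ an $\omega$-chain, and that $S'$ is bisimple. The bisimplicity reduces to checking that Green's relations do not change when $\textsf{0}$ is deleted: any equation $xa = b$ or $ax = b$ with $a,b \neq \textsf{0}$ forces $x \neq \textsf{0}$ (since $\textsf{0} \cdot a = a \cdot \textsf{0} = \textsf{0} \neq b$), so the $\mathscr{L}$-, $\mathscr{R}$-, and hence $\mathscr{D}$-classes of $S'$ are the restrictions of those of $S$. By hypothesis $S$ has a unique non-zero $\mathscr{D}$-class, so $S'$ consists of a single $\mathscr{D}$-class.

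Having established that $S'$ is a bisimple regular $\omega$-semigroup, the Reilly--Warne theorem yields a group $G$ and an endomorphism $\theta\colon G\to G$ with $S' \cong \textbf{B}(G,\theta)$; extending this isomorphism by sending $\textsf{0}$ to the adjoined zero gives $S \cong \textbf{B}(G,\theta)^0$. The main obstacle, as noted, is the multiplicative closure of $S'$, which depends crucially on the linearity of the $\omega$-chain of non-zero idempotents; the remaining steps are essentially bookkeeping.
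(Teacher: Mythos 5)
Your argument is correct, and it takes a different route from the paper. The paper does not prove Corollary~2.6 directly: it simply cites Theorem~4.2 of Munn's paper on $0$-bisimple inverse semigroups (with Lallement--Petrich and McAlister as alternative sources), i.e.\ it appeals to the general structure theory of $0$-bisimple inverse semigroups, which covers the case where products of non-zero elements may be zero (as happens, say, in Brandt semigroups). You instead reduce everything to the Reilly--Warne theorem, which the paper also recalls, by first proving that the zero is genuinely \emph{adjoined}: for non-zero $a,b$ the equivalence $ab=\textsf{0}\Leftrightarrow (a^{-1}a)(bb^{-1})=\textsf{0}$ together with the fact that $E(S)\setminus\{\textsf{0}\}$ is a chain (so the product of two non-zero idempotents is the smaller of them, hence non-zero) shows $S\setminus\{\textsf{0}\}$ is closed under multiplication; your remaining checks (closure under inversion, $E(S')=E(S)\setminus\{\textsf{0}\}$ an $\omega$-chain, and the restriction of Green's relations, which for inverse semigroups is immediate from $a\,\mathscr{L}\,b\Leftrightarrow a^{-1}a=b^{-1}b$ and $a\,\mathscr{R}\,b\Leftrightarrow aa^{-1}=bb^{-1}$) are routine and correctly handled. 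What your approach buys is a self-contained, elementary proof that isolates exactly where the $\omega$-chain hypothesis is used (primeness of the zero), at the cost of redoing a special case of machinery that Munn's theorem already provides; what the paper's citation buys is brevity and placement of the corollary inside the standard theory of $0$-bisimple inverse semigroups.
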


Later in this paper by $\textsf{0}$ we shall denote zero of a Reilly semigroup with adjoined zero $\textbf{B}(G,\theta)^0$.

By Theorem~3.3.8 from \cite{Engelking-1989} every open subspace of a locally compact space is locally compact and hence Corollaries~\ref{corollary-2.5} and~\ref{corollary-2.6} imply the following theorem.

\begin{theorem}\label{theorem-2.7}
Every semitopological $0$-bisimple inverse $\omega$-semigroup $S$ is topologically isomorphic to a topological Bruck--Reilly extension \emph{$\left(\textbf{B}(G,\theta)^0,\tau_{\textbf{B}}^0\right)$} of a semitopological group $(G,\tau_G)$ with adjoined zero (not necessarily as an isolated point) in the class of semitopological semigroups. Moreover, if $S$ is locally compact then $(G,\tau_G)$ is a locally compact topological group.
\end{theorem}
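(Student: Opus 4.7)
The plan is to deduce Theorem~\ref{theorem-2.7} from Corollaries~\ref{corollary-2.5} and~\ref{corollary-2.6} by restricting the semitopological structure to the open complement of $\textsf{0}$. First I would invoke Corollary~\ref{corollary-2.6} to realise $S$ algebraically as $\textbf{B}(G,\theta)^0$ for some group $G$ and endomorphism $\theta\colon G\to G$, and transport $\tau_S$ along this isomorphism to produce a Hausdorff shift-continuous topology $\tau_{\textbf{B}}^0$ on $\textbf{B}(G,\theta)^0$.

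The key observation is that, $S$ being Hausdorff, the singleton $\{\textsf{0}\}$ is closed in $(\textbf{B}(G,\theta)^0,\tau_{\textbf{B}}^0)$, so the bisimple inverse $\omega$-subsemigroup $\textbf{B}(G,\theta)=\textbf{B}(G,\theta)^0\setminus\{\textsf{0}\}$ is open. With the induced topology $\tau_{\textbf{B}}$ it remains a Hausdorff semitopological semigroup (separate continuity passes to any subsemigroup, and Hausdorffness to any subspace), so Corollary~\ref{corollary-2.5} applies and exhibits $(\textbf{B}(G,\theta),\tau_{\textbf{B}})$ as a topological Bruck--Reilly extension of a semitopological group $(G,\tau_G)$ in the class of Hausdorff semitopological semigroups. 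Restoring $\textsf{0}$ (possibly as a non-isolated point) then recovers $(\textbf{B}(G,\theta)^0,\tau_{\textbf{B}}^0)$ as exactly the structure described in the theorem.

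For the moreover clause, local compactness of $S$ combined with openness of $\textbf{B}(G,\theta)$ forces $(\textbf{B}(G,\theta),\tau_{\textbf{B}})$ to be locally compact by Theorem~3.3.8 of \cite{Engelking-1989}, and then the local-compactness half of Corollary~\ref{corollary-2.5} upgrades $(G,\tau_G)$ to a locally compact topological group. This argument has essentially no obstacle; its single substantive point is recognising that Hausdorffness of $S$ makes $\{\textsf{0}\}$ closed and hence $\textbf{B}(G,\theta)$ open, which is what powers both the reduction to Corollary~\ref{corollary-2.5} and the preservation of local compactness under restriction.
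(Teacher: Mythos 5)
Your proposal is correct and follows essentially the same route as the paper: the paper likewise combines Corollary~\ref{corollary-2.6} (algebraic structure), Corollary~\ref{corollary-2.5} applied to the open complement of zero, and Theorem~3.3.8 of \cite{Engelking-1989} to transfer local compactness to that open subsemigroup. Your only added detail—that Hausdorffness (indeed $T_1$) makes $\{\textsf{0}\}$ closed, so $\textbf{B}(G,\theta)$ is open—is exactly the observation the paper leaves implicit.
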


\section{On locally compact semitopological $0$-bisimple inverse $\omega$-semigroups with compact maximal subgroups}\label{section-3}

In this section we describe the structure of locally compact semitopological $0$-bisimple inverse $\omega$-semigroups with compact maximal subgroups.

\begin{lemma}\label{lemma-3.1}
Let $S$ be a semitopological $0$-bisimple inverse $\omega$-semigroup with compact maximal subgroups. Then every non-zero $\mathscr{H}$-class of $S$ is an open-and-closed subset of $S$.
\end{lemma}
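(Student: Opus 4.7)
The plan is to apply Theorem~\ref{theorem-2.7} to identify $S$ with a topological Bruck--Reilly extension $\left(\textbf{B}(G,\theta)^0,\tau_{\textbf{B}}^0\right)$ of a locally compact topological group $(G,\tau_G)$ with adjoined zero; the assumption that every maximal subgroup of $S$ is compact, combined with Definition~\ref{definition-2.3} and Proposition~\ref{proposition-2.4}$(ii)$, forces $(G,\tau_G)$ itself to be compact. Under this identification the non-zero $\mathscr{H}$-classes of $S$ are exactly the sets $G_{i,j}=\{(i,g,j):g\in G\}$, $i,j\in N$, so it suffices to prove that each such $G_{i,j}$ is both closed and open in $S$.

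Closedness is the easy half. By Definition~\ref{definition-2.3} together with Proposition~\ref{proposition-2.4}$(ii)$, the subspace $G_{0,0}$ is homeomorphic to $G$, and Proposition~\ref{proposition-2.4}$(i)$ provides a homeomorphism of subspaces $G_{0,0}\to G_{i,j}$. Hence every $G_{i,j}$ is homeomorphic to the compact group $G$ and is therefore a compact subset of the Hausdorff space $S$, which makes it closed.

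For openness I plan to use the open ``diagonal strips'' supplied by Proposition~\ref{proposition-2.4}$(viii)$, treating the cases $i\leqslant j$ and $i\geqslant j$ symmetrically. Suppose $i\leqslant j$ and put $k=j-i$. For $i=0$ the set $G_{0,k}$ is open directly by Proposition~\ref{proposition-2.4}$(vi)$ (if $k=0$) or $(vii)$ (if $k\geqslant 1$). For $i\geqslant 1$, Proposition~\ref{proposition-2.4}$(viii)$ yields that
\[
U_i:=G_{0,k}\cup G_{1,k+1}\cup\cdots\cup G_{i,k+i}
\]
is open in $S$, whereas the shorter strip
\[
U_{i-1}:=G_{0,k}\cup G_{1,k+1}\cup\cdots\cup G_{i-1,k+i-1}
\]
is a finite union of compacta by the previous paragraph, hence closed in the Hausdorff space $S$. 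Therefore $G_{i,j}=G_{i,k+i}=U_i\setminus U_{i-1}$ is the intersection of two open sets and so is open. The case $i\geqslant j$ is handled symmetrically using the second family of diagonal strips in Proposition~\ref{proposition-2.4}$(viii)$. The main subtlety is precisely here: Proposition~\ref{proposition-2.4}$(viii)$ supplies only unions of $\mathscr{H}$-classes as open sets, so it is compactness of $G$ that allows one to convert the shorter strip into a closed set and thereby peel off the ``corner'' class $G_{i,j}$ from the rest of its strip.
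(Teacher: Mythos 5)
Your proposal is correct and follows essentially the paper's route: identify $S$ with a topological Bruck--Reilly extension $\left(\textbf{B}(G,\theta)^0,\tau_{\textbf{B}}^0\right)$ of a compact group via Theorem~\ref{theorem-2.7}, obtain closedness from compactness of each $G_{i,j}$ (Proposition~\ref{proposition-2.4}$(i)$--$(ii)$ plus Hausdorffness), and obtain openness by peeling the class off a larger open set supplied by Proposition~\ref{proposition-2.4} using that finite unions of the compact classes are closed. The only cosmetic difference is that the paper's (very terse) proof cites items $(ii)$ and $(vi)$ --- i.e.\ removes the closed principal ideals generated by $(i+1,1_G,i+1)$ and $(j+1,1_G,j+1)$ and then peels off the remaining finitely many compact classes --- whereas you use the open diagonal strips of item $(viii)$; both are the same idea.
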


\begin{proof}
By Theorem~\ref{theorem-2.7} there exist a compact semitopological group (which by the Ellis Theorem is a topological group \cite{Ellis-1957}) $(G,\tau_G)$ and a continuous homomorphism $\theta\colon G\to G$ such that the semitopological semigroup $S$ is topologically isomorphic to a topological Bruck--Reilly extension $\left(\textbf{B}(G,\theta)^0,\tau_{\textbf{B}}^0\right)$ of $(G,\tau_G)$ with an adjoined zero. It is obvious that every non-zero $\mathscr{H}$-class of the semigroup $\textbf{B}(G,\theta)^0$ coincides with $G_{i,j}$ for some $i,j\in \mathbb{N}_0$. Then items $(ii)$ and $(vi)$ of Proposition~\ref{proposition-2.4} imply the assertion of the lemma.
\end{proof}

For an arbitrary group $G$ and a homomorphism $\theta\colon G\to G$ we define a map $\eta\colon \textbf{BR}(G,\theta)^0\to \mathscr{C}^0={\mathscr{C}}(p,q)\cup\{0\}$ by the formulae $\eta(i, g, j)=q^ip^j$ and $\eta(\textsf{0})=0$, for $g\in G$ and $i,j\in \mathbb{N}_0$. Then using Proposition~\ref{proposition-2.1} we can show that the map $\eta$ is a homomorphism and hence the relation $\eta^\natural$ on $\textbf{BR}(G,\theta)$ defined in the following way
\begin{equation}\label{eq-3.1}
  (i, s, j)\eta^\natural(m,t,n) \quad \hbox{if and only if} \quad i=m \quad \hbox{and} \quad j=n, \qquad \hbox{and}\qquad \textsf{0}\eta^\natural \textsf{0}
\end{equation}
is a congruence on the semigroup $\textbf{BR}(G,\theta)^0$.

\begin{lemma}\label{lemma-3.2}
Let \emph{$\left(\textbf{B}(G,\theta)^0,\tau_{\textbf{B}}^0\right)$} be a semitopological semigroup with a compact group of units. Then $\eta^\natural$ is a closed congruence on \emph{$\left(\textbf{B}(G,\theta)^0,\tau_{\textbf{B}}^0\right)$}.
\end{lemma}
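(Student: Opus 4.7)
The plan is to show that the complement of the graph of $\eta^\natural$ is open in $\textbf{B}(G,\theta)^0\times\textbf{B}(G,\theta)^0$, by exhibiting, for every pair of non-$\eta^\natural$-related points, a product open neighbourhood that misses the graph entirely. This parallels the strategy of Proposition~\ref{proposition-2.2}, except that I must separately handle pairs involving the adjoined zero $\textsf{0}$. The key input is Lemma~\ref{lemma-3.1}, which (crucially using compactness of the maximal subgroups) provides that each non-zero $\mathscr{H}$-class $G_{i,j}$ is open-and-closed in $\left(\textbf{B}(G,\theta)^0,\tau_{\textbf{B}}^0\right)$.

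First I fix $x,y\in \textbf{B}(G,\theta)^0$ with $(x,y)\notin \eta^\natural$ and split into cases. In the case $x=(i,s,j)$ and $y=(m,t,n)$ with $(i,j)\neq (m,n)$, I take as separating neighbourhood $G_{i,j}\times G_{m,n}$; both factors are open by Lemma~\ref{lemma-3.1}, the sets are disjoint, and by the very definition of $\eta^\natural$ no point of $G_{i,j}$ is $\eta^\natural$-related to any point of $G_{m,n}$. Hence $G_{i,j}\times G_{m,n}$ avoids the graph of $\eta^\natural$.

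The remaining case is when exactly one of the two points equals $\textsf{0}$. Up to symmetry, suppose $x=(i,s,j)\in G_{i,j}$ and $y=\textsf{0}$. I take $U=G_{i,j}$ as the neighbourhood of $x$, which is open by Lemma~\ref{lemma-3.1}. For the neighbourhood of $\textsf{0}$ I set $V=\textbf{B}(G,\theta)^0\setminus G_{i,j}$; this is open precisely because $G_{i,j}$ is \emph{closed} in $\left(\textbf{B}(G,\theta)^0,\tau_{\textbf{B}}^0\right)$ (again Lemma~\ref{lemma-3.1}), and it contains $\textsf{0}$. Any point of $V$ is either $\textsf{0}$ itself or lies in some $G_{m,n}$ with $(m,n)\neq(i,j)$, so no point of $V$ is $\eta^\natural$-related to any point of $U$. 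Therefore $U\times V$ is an open neighbourhood of $(x,\textsf{0})$ disjoint from the graph of $\eta^\natural$, and the symmetric argument handles $(x,y)=(\textsf{0},(m,t,n))$.

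Since every non-related pair admits such a product neighbourhood, the complement of $\eta^\natural$ is open, so $\eta^\natural$ is closed. I do not expect any real obstacle here: Lemma~\ref{lemma-3.1} has already absorbed the compactness hypothesis and done the topological work, and the argument reduces to a straightforward case analysis on whether $\textsf{0}$ is involved; the only small point of care is remembering to use \emph{both} the openness and the closedness of each $G_{i,j}$ (openness to separate two non-zero classes, closedness to separate $\textsf{0}$ from a non-zero class).
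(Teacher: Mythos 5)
Your proof is correct and follows essentially the same route as the paper: the paper also separates two non-$\eta^\natural$-related non-zero points by $G_{i,j}\times G_{m,n}$ and a non-zero point from $\textsf{0}$ by $G_{i,j}\times\left(\textbf{B}(G,\theta)^0\setminus G_{i,j}\right)$, relying on Lemma~\ref{lemma-3.1} for the open-and-closedness of the $\mathscr{H}$-classes. Nothing further is needed.
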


\begin{proof}
Fix arbitrary non-$\eta^\natural$-equivalent non-zero elements $(i, s, j)$ and $(m,t,n)$ of the Reilly semigroup with adjoined zero $\textbf{B}(G,\theta)^0$. Then $G_{i,j}$ and $G_{m,n}$ are open-and-closed neighbourhoods of the points $(i, s, j)$ and $(m,t,n)$ in the space $\left(\textbf{B}(G,\theta)^0,\tau_{\textbf{B}}^0\right)$, respectively, such that $\eta^\natural\cap\left(G_{i,j}\times G_{m,n}\right)=\varnothing$. Also, the above arguments imply that $G_{i,j}\times\left(\textbf{B}(G,\theta)^0\setminus G_{i,j}\right)$ is an open-and-closed neighbourhood of the ordered pair $\left((i, s, j),\textsf{0}\right)$ in $\textbf{B}(G,\theta)^0\times \textbf{B}(G,\theta)^0$ with the product topology which does not intersect the congruence $\eta^\natural$ of semigroup $\textbf{B}(G,\theta)^0$. Hence we get that $\eta^\natural$ is a closed congruence on the semitopological semigroup $\left(\textbf{B}(G,\theta)^0,\tau_{\textbf{B}}^0\right)$.
\end{proof}

Later we shall need the following notions. A continuous map $f\colon X\to Y$ from a topological space $X$ into a topological space $Y$ is called:
\begin{itemize}
  \item[$\bullet$] \emph{quotient} if the set $f^{-1}(U)$ is open in $X$ if and only if $U$ is open in $Y$ (see \cite{Moore-1925} and \cite[Section~2.4]{Engelking-1989});
  \item[$\bullet$] \emph{hereditarily quotient} (or \emph{pseudoopen}) if for every $B\subset Y$ the restriction $f|_{B}\colon f^{-1}(B)\to B$ of $f$ is a quotient map (see \cite{McDougle-1958, McDougle-1959, Arkhangelskii-1963} and \cite[Section~2.4]{Engelking-1989});
  \item[$\bullet$] \emph{open} if $f(U)$ is open in $Y$ for every open subset $U$ in $X$;
  \item[$\bullet$] \emph{closed} if $f(F)$ is closed in $Y$ for every closed subset $F$ in $X$;
  \item[$\bullet$] \emph{perfect} if $X$ is Hausdorff, $f$ is a closed map and all fibers $f^{-1}(y)$ are compact subsets of $X$ \cite{Vainstein-1947}.
\end{itemize}
Every perfect map is closed, every closed map   and every hereditarily quotient map are quotient \cite{Engelking-1989}. Moreover a continuous map $f\colon X\to Y$ from a topological space $X$ onto a topological space $Y$ is hereditarily quotient if and only if for every $y\in Y$ and every open subset $U$ in $X$ which contains $f^{-1}(y)$ we have that $y\in\operatorname{int}_Y(f(U))$ (see \cite[2.4.F]{Engelking-1989}).

\begin{lemma}\label{lemma-3.4}
Let \emph{$\left(\textbf{B}(G,\theta)^0,\tau_{\textbf{B}}^0\right)$} be a semitopological semigroup with a compact group of units. Then the quotient natural homomorphism $\eta\colon \textbf{B}(G,\theta)^0\to \mathscr{C}^0$ is an open map.
\end{lemma}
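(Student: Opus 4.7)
The plan is to verify openness via the saturation characterization. Since $\eta$ is the \emph{quotient} natural homomorphism modulo the congruence $\eta^{\natural}$ from Lemma~\ref{lemma-3.2}, the codomain $\mathscr{C}^{0}$ carries the quotient topology, and proving that $\eta$ is open is equivalent to showing that the $\eta^{\natural}$-saturation $\eta^{-1}(\eta(U))$ is open in $\textbf{B}(G,\theta)^{0}$ for every open $U\subseteq\textbf{B}(G,\theta)^{0}$.

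First I would make the $\eta^{\natural}$-classes explicit. A non-zero class is exactly $G_{i,j}=\{(i,s,j):s\in G\}$ for some $i,j\in N$, and the only remaining class is the singleton $\{\textsf{0}\}$. A direct unwinding of the definition of $\eta$ yields, for an arbitrary $U\subseteq\textbf{B}(G,\theta)^{0}$,
\begin{equation*}
\eta^{-1}(\eta(U)) \;=\; U \;\cup\; \bigcup\bigl\{G_{i,j} : i,j\in N,\; U\cap G_{i,j}\neq\varnothing\bigr\}.
\end{equation*}
The point $\textsf{0}$ needs no separate treatment: its $\eta^{\natural}$-class is a singleton, so $\textsf{0}\in\eta^{-1}(\eta(U))$ if and only if $\textsf{0}\in U$, and in that case $\textsf{0}$ is already supplied by the summand $U$ itself.

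Next I would invoke Lemma~\ref{lemma-3.1}: under the standing hypothesis that the maximal subgroup $G$ is compact, every non-zero $\mathscr{H}$-class $G_{i,j}$ is open (indeed clopen) in $\left(\textbf{B}(G,\theta)^{0},\tau_{\textbf{B}}^{0}\right)$. Consequently $\eta^{-1}(\eta(U))$ is a union of open sets and therefore open, so by the definition of the quotient topology $\eta(U)$ is open in $\mathscr{C}^{0}$; thus $\eta$ is an open map.

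I do not expect any serious obstacle: the substantive content has already been packaged into Lemma~\ref{lemma-3.1}, and what remains is the set-theoretic identification of the saturation together with the observation that the zero class is a singleton. The only small thing to keep straight is the interpretation of the word ``quotient'' in the statement, which places the quotient topology on $\mathscr{C}^{0}$ and reduces ``$\eta$ is open'' to ``saturations of opens are open''.
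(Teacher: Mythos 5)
Your proposal is correct and follows essentially the same route as the paper's proof: identify the saturation $\eta^{-1}(\eta(U))=U\cup\bigcup\{G_{i,j}\colon G_{i,j}\cap U\neq\varnothing\}$, apply Lemma~\ref{lemma-3.1} to see it is open, and use the quotient-topology characterization of openness. The only (cosmetic) difference is that you treat open sets with $\textsf{0}\notin U$ by the same saturation argument, whereas the paper disposes of that case separately by citing the openness and discreteness of $\mathscr{C}(p,q)$ in $\mathscr{C}^0$; your uniform treatment is if anything slightly cleaner.
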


\begin{proof}
If $U$ is an open subset of $\left(\textbf{B}(G,\theta)^0,\tau_{\textbf{B}}^0\right)$ such that $U\not\ni \textsf{0}$ then $\eta(U)$ is an open subset of $\mathscr{C}^0$, because $\mathscr{C}(p,q)$ is a discrete open subset of the space $\mathscr{C}^0$ (see \cite[Proposition~1]{Bertman-West-1976}).

Suppose $U\ni \textsf{0}$ is an open subset of $\left(\textbf{B}(G,\theta)^0,\tau_{\textbf{B}}^0\right)$. Put $U^*=\eta^{-1}\left(\eta(U)\right)$. Then $U^*=\eta^{-1}\left(\eta(U^*)\right)$. Since $\eta\colon \textbf{B}(G,\theta)^0\to \mathscr{C}^0$ is a natural homomorphism,
\begin{equation*}
  U^*=\bigcup\left\{G_{i,j}\colon G_{i,j}\cap U\neq\varnothing\right\}\cup \{\textsf{0}\}.
\end{equation*}
Since every $0$-bisimple inverse $\omega$-semigroup is isomorphic to a Reilly semigroup with adjoined zero, the last equality and Lemma~\ref{lemma-3.1} imply that $U^*$ is an open subset of the space $\left(\textbf{B}(G,\theta)^0,\tau_{\textbf{B}}^0\right)$, and since $\eta$ is a quotient map and $U^*=\eta^{-1}\left(\eta(U^*)\right)$, we conclude that $\eta(U)$ is an open subset of the space $\mathscr{C}^0$.
\end{proof}

The following simple example from the paper \cite{Gutik-2015} shows that  the semigroup $\mathscr{C}^0$ admits a topology $\tau_{\operatorname{\textsf{Ac}}}$ making it a compact semitopological semigroup.

\begin{example}[\cite{Gutik-2015}]\label{example-3.5}
On the semigroup $\mathscr{C}^0$ we define a topology $\tau_{\operatorname{\textsf{Ac}}}$ in the following way:
\begin{itemize}
  \item[$(i)$] every element of the bicyclic monoid ${\mathscr{C}}(p,q)$ is an isolated point in the space $(\mathscr{C}^0,\tau_{\operatorname{\textsf{Ac}}})$;
  \item[$(ii)$] the family $\mathscr{B}(0)=\left\{U\subseteq \mathscr{C}^0\colon U\ni 0 \hbox{~and~} {\mathscr{C}}(p,q)\setminus U \hbox{~is finite}\right\}$ is a base of the topology $\tau_{\operatorname{\textsf{Ac}}}$ at zero $0\in\mathscr{C}^0$,
\end{itemize}
i.e., $\tau_{\operatorname{\textsf{Ac}}}$ is the topology of the Alexandroff one-point compactification of the discrete space ${\mathscr{C}}(p,q)$ with the remainder $\{0\}$. The semigroup operation in $(\mathscr{C}^0,\tau_{\operatorname{\textsf{Ac}}})$ is separately continuous, because all elements of the bicyclic semigroup ${\mathscr{C}}(p,q)$ are isolated points in the space $(\mathscr{C}^0,\tau_{\operatorname{\textsf{Ac}}})$ and left
and right translations are finite-to-one functions in ${\mathscr{C}}(p,q)$ (see \cite[Lemma I.1]{Eberhart-Selden-1969}).
\end{example}

\begin{lemma}\label{lemma-3.6}
Let \emph{$\left(\textbf{B}(G,\theta)^0,\tau_{\textbf{B}}^0\right)$} be a locally compact semitopological semigroup with a compact group of units. Then the quotient semigroup \emph{$\textbf{B}(G,\theta)^0/\eta^\natural$} with the quotient topology is topologically isomorphic to the semigroup $\mathscr{C}^0$ with either the topology $\tau_{\operatorname{\textsf{Ac}}}$ or the discrete topology.
\end{lemma}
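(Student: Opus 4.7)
The plan is to pass to the quotient $\mathscr{C}^0$ via the natural homomorphism $\eta$ and reduce the problem to the known dichotomy for locally compact semitopological $\mathscr{C}^0$ from \cite{Gutik-2015}. The new work is to verify that the quotient topology on $\mathscr{C}^0$ is Hausdorff, locally compact, and shift-continuous, and then to identify that topology in the compact alternative.

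First, Lemma~\ref{lemma-3.2} says that $\eta^\natural$ is a closed congruence, hence the quotient $\mathscr{C}^0$ is Hausdorff. By Lemma~\ref{lemma-3.4} the quotient homomorphism $\eta$ is open, so it is a continuous open surjection from a Hausdorff locally compact space, which is well known to yield a locally compact quotient. For shift-continuity of multiplication on the quotient, I will fix $s,t\in \mathscr{C}^0$ and an open $V\ni s\cdot t$, lift $s,t$ to $\widetilde s,\widetilde t\in\textbf{B}(G,\theta)^0$, use separate continuity in $\textbf{B}(G,\theta)^0$ to produce an open $W\ni \widetilde s$ with $W\cdot\widetilde t\subseteq \eta^{-1}(V)$, and then push forward via $\eta$: by openness of $\eta$, $\eta(W)$ is an open neighborhood of $s$ and $\eta(W)\cdot t=\eta(W\cdot\widetilde t)\subseteq V$. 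The continuity of left translations is symmetric.

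With these hypotheses in hand, the main result of \cite{Gutik-2015} gives immediately that the quotient topology on $\mathscr{C}^0$ is either compact or discrete. In the discrete case the lemma is established. In the compact case I use Lemma~\ref{lemma-3.1}: each non-zero $\mathscr{H}$-class $G_{i,j}=\eta^{-1}(q^ip^j)$ is open and saturated under $\eta^\natural$, and openness of $\eta$ therefore forces $\{q^ip^j\}$ to be open in the quotient. Thus every point of $\mathscr{C}(p,q)$ is isolated; since an infinite discrete space cannot be compact, $0$ must be the unique non-isolated point. A compact Hausdorff space in which the complement of a single point is a discrete (open, dense) subspace is necessarily the Alexandroff one-point compactification of that discrete part, because compact subsets of a discrete space are finite, so every neighborhood of the non-isolated point is cofinite. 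This identifies the quotient topology with $\tau_{\operatorname{\textsf{Ac}}}$ of Example~\ref{example-3.5}.

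I do not anticipate a serious obstacle beyond the bookkeeping of the quotient passage; the only step that genuinely uses all of the previous machinery is the verification that separate continuity descends to the quotient, where the openness of $\eta$ established in Lemma~\ref{lemma-3.4} plays the essential role. Once that is in place, the statement reduces to the $\mathscr{C}^0$ dichotomy together with the elementary observation that a compact Hausdorff one-point extension of a discrete space is its Alexandroff compactification.
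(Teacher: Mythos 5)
Your proposal is correct and follows essentially the same route as the paper: openness of $\eta$ (Lemma~\ref{lemma-3.4}) together with Engelking's theorem on open images gives a Hausdorff locally compact semitopological quotient algebraically equal to $\mathscr{C}^0$, and the dichotomy of \cite{Gutik-2015} finishes the argument, your extra verifications (descent of separate continuity, identification of the compact alternative with $\tau_{\operatorname{\textsf{Ac}}}$) being details the paper subsumes in that citation. One small point of ordering: closedness of $\eta^\natural$ alone does not yield Hausdorffness of the quotient --- you must combine it with the openness of $\eta$ (or argue directly from the clopen saturated $\mathscr{H}$-classes, as the paper does) --- but since you establish openness immediately afterwards, this is only a presentational matter.
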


\begin{proof}
By Lemma~\ref{lemma-3.1} every non-zero $\mathscr{H}$-class of the semigroup $\textbf{B}(G,\theta)^0$ is an open-and-closed subset of $\left(\textbf{B}(G,\theta)^0,\tau_{\textbf{B}}^0\right)$ and hence the quotient space $\left(\textbf{B}(G,\theta)^0,\tau_{\textbf{B}}^0\right)/\eta^\natural$ is Hausdorff. Lemma~\ref{lemma-3.4} implies that $\eta$ is an open map. By Theorem~3.3.15 from \cite{Engelking-1989}, $\textbf{B}(G,\theta)^0/\eta^\natural$ with the quotient topology is a locally compact space. Then Theorem~1 of \cite{Gutik-2015} implies the statement of our lemma.
\end{proof}

Lemma~\ref{lemma-3.6} implies the following proposition.

\begin{proposition}\label{proposition-3.7}
Let \emph{$\left(\textbf{B}(G,\theta)^0,\tau_{\textbf{B}}^0\right)$} be a locally compact semitopological semigroup with a compact group of units and an isolated zero \emph{$\textsf{0}$}. Then the topological space \emph{$\left(\textbf{B}(G,\theta)^0,\tau_{\textbf{B}}^0\right)$} is homeomorphic to topological sum of its $\mathscr{H}$-classes with induced from \emph{$\left(\textbf{B}(G,\theta)^0,\tau_{\textbf{B}}^0\right)$} topologies, i.e.,
\emph{\begin{equation*}
\left(\textbf{B}(G,\theta)^0,\tau_{\textbf{B}}^0\right)=\bigoplus\left\{G_{i,j}\colon i,j\in \mathbb{N}_0\right\}\oplus\{\textsf{0}\}.
\end{equation*}}
\end{proposition}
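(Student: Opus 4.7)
My plan is to use Lemma~\ref{lemma-3.6} together with the isolated zero hypothesis to pin down the quotient topology, and then lift this to a partition of $\textbf{B}(G,\theta)^0$ into open subsets, which is the definition of a topological sum.

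First I would show that the quotient $\textbf{B}(G,\theta)^0/\eta^\natural$ is the discrete $\mathscr{C}^0$. By Lemma~\ref{lemma-3.6} it is either $(\mathscr{C}^0,\tau_{\operatorname{\textsf{Ac}}})$ or discrete. By Lemma~\ref{lemma-3.4}, the natural homomorphism $\eta\colon \textbf{B}(G,\theta)^0\to \mathscr{C}^0$ is open. Since $\textsf{0}$ is isolated in $\left(\textbf{B}(G,\theta)^0,\tau_{\textbf{B}}^0\right)$, the singleton $\{\textsf{0}\}$ is open and hence $\eta(\{\textsf{0}\})=\{0\}$ is open in the quotient. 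In $(\mathscr{C}^0,\tau_{\operatorname{\textsf{Ac}}})$ the element $0$ is the non-isolated point of the Alexandroff compactification (Example~\ref{example-3.5}), so that possibility is excluded and the quotient must carry the discrete topology.

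Next I would transfer discreteness back along $\eta$. In the discrete $\mathscr{C}^0$ every singleton $\{q^ip^j\}$ is open, so by continuity of $\eta$ each preimage $\eta^{-1}(\{q^ip^j\})=G_{i,j}$ is open in $\left(\textbf{B}(G,\theta)^0,\tau_{\textbf{B}}^0\right)$, and likewise $\eta^{-1}(\{0\})=\{\textsf{0}\}$ is open. Since the family $\{G_{i,j}\colon i,j\in N\}\cup\{\{\textsf{0}\}\}$ is precisely the partition of $\textbf{B}(G,\theta)^0$ into $\mathscr{H}$-classes (together with the zero class), and each piece is open in the ambient topology, the topology $\tau_{\textbf{B}}^0$ coincides with the topological sum topology on this partition. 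This yields the claimed homeomorphism
\[
\left(\textbf{B}(G,\theta)^0,\tau_{\textbf{B}}^0\right)=\bigoplus\left\{G_{i,j}\colon i,j\in N\right\}\oplus\{\textsf{0}\}.
\]

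There is no real obstacle: once Lemma~\ref{lemma-3.6} is in hand, the only thing to verify is that the isolated-zero assumption forces the quotient to be discrete rather than the Alexandroff compactification, which is immediate from openness of $\eta$ (Lemma~\ref{lemma-3.4}). The remaining step is the standard fact that a partition of a space into open subsets realises it as the topological sum of those subspaces.
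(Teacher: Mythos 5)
Your proposal is correct and is essentially the paper's own argument: the paper derives Proposition~\ref{proposition-3.7} in one line from Lemma~\ref{lemma-3.6}, and your write-up simply fills in that derivation (isolated zero plus openness of $\eta$ from Lemma~\ref{lemma-3.4} rules out the $(\mathscr{C}^0,\tau_{\operatorname{\textsf{Ac}}})$ alternative, so the quotient is discrete, and an open partition of a space is a topological sum). The only minor remark is that openness of the non-zero $\mathscr{H}$-classes $G_{i,j}$ is already available directly from Lemma~\ref{lemma-3.1}, so the pull-back through the discrete quotient is not strictly needed, but your reasoning is sound as written.
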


\begin{lemma}\label{lemma-3.8}
Let \emph{$\left(\textbf{B}(G,\theta)^0,\tau_{\textbf{B}}^0\right)$} be a locally compact semitopological semigroup with compact maximal subgroups and a non-isolated zero \emph{$\textsf{0}$}. Then for every open neighbourhood \emph{$U_\textsf{0}$} of zero in \emph{$\left(\textbf{B}(G,\theta)^0,\tau_{\textbf{B}}^0\right)$} there are only  finitely many non-zero $\mathscr{H}$-classes $G_{i,j}$, $i,j\in \mathbb{N}_0$, non-intersecting with  \emph{$U_\textsf{0}$}.
\end{lemma}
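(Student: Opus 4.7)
The plan is to reduce the statement to the Alexandroff-compactification description of the quotient provided by Lemma~\ref{lemma-3.6}, and then translate the cofiniteness property of $\tau_{\operatorname{\textsf{Ac}}}$-neighbourhoods of $0$ back to $\textbf{B}(G,\theta)^0$ via the open map $\eta$.

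First I would verify that the quotient $\textbf{B}(G,\theta)^0/\eta^\natural$ \emph{cannot} be discrete. By Lemma~\ref{lemma-3.4} the natural homomorphism $\eta\colon \textbf{B}(G,\theta)^0\to\mathscr{C}^0$ is open, and by Lemma~\ref{lemma-3.2} the congruence $\eta^\natural$ is closed, so $\eta$ descends to the homeomorphism between the quotient and one of the two spaces in Lemma~\ref{lemma-3.6}. Since $\eta^{-1}(0)=\{\textsf{0}\}$, if $\{0\}$ were open in $\mathscr{C}^0$, then $\{\textsf{0}\}$ would be open in $\left(\textbf{B}(G,\theta)^0,\tau_{\textbf{B}}^0\right)$, contradicting the hypothesis that $\textsf{0}$ is non-isolated. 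Hence the quotient is topologically isomorphic to $(\mathscr{C}^0,\tau_{\operatorname{\textsf{Ac}}})$.

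Next, fix an arbitrary open neighbourhood $U(\textsf{0})$ of $\textsf{0}$. By Lemma~\ref{lemma-3.4} the image $\eta(U(\textsf{0}))$ is an open neighbourhood of $0$ in $(\mathscr{C}^0,\tau_{\operatorname{\textsf{Ac}}})$, so by the description of $\tau_{\operatorname{\textsf{Ac}}}$ in Example~\ref{example-3.5} the set $\mathscr{C}(p,q)\setminus\eta(U(\textsf{0}))$ is finite. Now I would use the tautology
\begin{equation*}
G_{i,j}\cap U(\textsf{0})=\varnothing \qquad \Longleftrightarrow \qquad q^ip^j\notin \eta(U(\textsf{0})),
\end{equation*}
which holds because $\eta^{-1}(q^ip^j)=G_{i,j}$ and $\eta$ is just the obvious projection on non-zero elements. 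Consequently the set of pairs $(i,j)\in N\times N$ for which $G_{i,j}\cap U(\textsf{0})=\varnothing$ is in bijection with $\mathscr{C}(p,q)\setminus \eta(U(\textsf{0}))$, hence finite, which is the desired conclusion.

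The only mild subtlety is confirming the implication \emph{``$\textsf{0}$ non-isolated $\Rightarrow$ quotient is $(\mathscr{C}^0,\tau_{\operatorname{\textsf{Ac}}})$''}, and this relies essentially on the openness of $\eta$ (Lemma~\ref{lemma-3.4}); once this is in place the rest is a direct reading of the Alexandroff neighbourhood base at $0$ back through $\eta^{-1}$. I do not expect any genuine obstacle: the argument is essentially a bookkeeping translation between $U(\textsf{0})$ and $\eta(U(\textsf{0}))$ afforded by the fact that the fibers of $\eta$ are precisely the $\mathscr{H}$-classes $G_{i,j}$.
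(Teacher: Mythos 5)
Your proof is correct and follows essentially the same route as the paper: both arguments push $U(\textsf{0})$ through the open natural homomorphism $\eta$ (Lemma~\ref{lemma-3.4}), invoke the dichotomy of Lemma~\ref{lemma-3.6}, use the non-isolatedness of $\textsf{0}$ to exclude the discrete quotient, and exploit that the fibres of $\eta$ over non-zero elements are exactly the $\mathscr{H}$-classes $G_{i,j}$. The only difference is presentational: the paper phrases the same reduction as a proof by contradiction, while you argue directly and spell out the step ruling out the discrete case.
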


\begin{proof}
Suppose to the contrary that there exists an open neighbourhood $U_\textsf{0}$ of zero in \emph{$\left(\textbf{B}(G,\theta)^0,\tau_{\textbf{B}}^0\right)$} such that $G_{i,j}\cap U_\textsf{0}=\varnothing$ for infinitely many non-zero $\mathscr{H}$-classes $G_{i,j}$, $i,j\in \mathbb{N}_0$. Then by Lemma~\ref{lemma-3.4} the quotient natural homomorphism $\eta\colon \textbf{B}(G,\theta)^0\to \mathscr{C}^0$ is an open map, and hence the quotient semigroup \emph{$\textbf{B}(G,\theta)^0/\eta^\natural$} with the quotient topology is neither compact nor discrete, which contradicts Lemma~\ref{lemma-3.6}.
\end{proof}

For each subset $A$ of $\left(\textbf{B}(G,\theta)^0,\tau_{\textbf{B}}^0\right)$ and each $i,j\in \mathbb{N}_0$ put $\left[A\right]_{i,j}=A\cap G_{i,j}$.

\begin{lemma}\label{lemma-3.9}
Let \emph{$\left(\textbf{B}(G,\theta)^0,\tau_{\textbf{B}}^0\right)$} be a locally compact semitopological semigroup with a compact group of units and  non-isolated zero \emph{$\textsf{0}$}. Then for every open neighbourhood \emph{$U_\textsf{0}$} of zero in \emph{$\left(\textbf{B}(G,\theta)^0,\tau_{\textbf{B}}^0\right)$} and every non-negative integer $i_0$ the sets
\begin{equation*}
  \left\{j\in \mathbb{N}_0\colon G_{i_0,j}\nsubseteq U_\emph{\textsf{0}}\right\} \qquad \hbox{and} \qquad \left\{j\in \mathbb{N}_0\colon G_{j,i_0}\nsubseteq U_\emph{\textsf{0}}\right\}
\end{equation*}
are finite.
\end{lemma}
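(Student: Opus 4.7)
The plan is to promote the \emph{intersection} conclusion of Lemma~\ref{lemma-3.8} to the \emph{containment} conclusion needed here by exploiting the compact maximal subgroup $H:=G_{i_0,i_0}$; by Proposition~\ref{proposition-2.4}(ii), $H$ is topologically isomorphic to the compact topological group $G=G_{0,0}$, hence is a compact subgroup of $\left(\textbf{B}(G,\theta)^0,\tau_{\textbf{B}}^0\right)$. The crucial ingredient is the joint continuity of the restricted multiplication $H\times \textbf{B}(G,\theta)^0\to \textbf{B}(G,\theta)^0$, $(h,x)\mapsto h\cdot x$ (and, symmetrically, right multiplication by $H$). Since $\left(\textbf{B}(G,\theta)^0,\tau_{\textbf{B}}^0\right)$ is locally compact Hausdorff (hence completely regular and \v{C}ech-complete) and $H$ is a compact Hausdorff subgroup, this joint continuity is supplied by a Lawson-type theorem on joint continuity in semitopological semigroups.

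Granting this, a tube-lemma argument yields a uniform neighbourhood of $\textsf{0}$: for every $h\in H$ we have $h\cdot\textsf{0}=\textsf{0}\in U(\textsf{0})$, so joint continuity at $(h,\textsf{0})$ furnishes open $W_h\ni h$ in $H$ and $V_h\ni\textsf{0}$ in $\textbf{B}(G,\theta)^0$ with $W_h\cdot V_h\subseteq U(\textsf{0})$; compactness of $H$ extracts a finite subcover $H\subseteq W_{h_1}\cup\cdots\cup W_{h_n}$, and then $V:=V_{h_1}\cap\cdots\cap V_{h_n}$ is an open neighbourhood of $\textsf{0}$ satisfying $H\cdot V\subseteq U(\textsf{0})$. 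Lemma~\ref{lemma-3.8} applied to $V$ then yields $V\cap G_{i,j}\neq\varnothing$ for all but finitely many $(i,j)\in N\times N$; in particular, for all but finitely many $j\in N$ we may choose $y_j=(i_0,s_j,j)\in V\cap G_{i_0,j}$. The Bruck--Reilly rule gives $(i_0,g,i_0)\cdot(i_0,s_j,j)=(i_0,gs_j,j)$, and since $gs_j$ sweeps through all of $G$ as $g$ does, $H\cdot y_j=G_{i_0,j}$. Hence $G_{i_0,j}=H\cdot y_j\subseteq H\cdot V\subseteq U(\textsf{0})$ for all but finitely many $j$, so the set $\{j\in N:G_{i_0,j}\nsubseteq U(\textsf{0})\}$ is finite.

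The set $\{j\in N:G_{j,i_0}\nsubseteq U(\textsf{0})\}$ is handled by the symmetric argument using right multiplication by $H$, the transitivity of the right action on each $G_{j,i_0}$ being provided by the analogous computation $(j,s,i_0)\cdot(i_0,g,i_0)=(j,sg,i_0)$. The main obstacle is the joint-continuity step: separate continuity alone supplies, for each fixed $h\in H$, an open $V_h\ni\textsf{0}$ with $h\cdot V_h\subseteq U(\textsf{0})$, but furnishes no mechanism for selecting a single open $V\ni\textsf{0}$ working uniformly for all $h\in H$ simultaneously; the compactness of $H$ combined with a Lawson-type joint-continuity result is precisely what supplies this uniformity, and the remainder of the argument is then a routine consequence of Lemma~\ref{lemma-3.8} and the group-transitive left/right action of $H$ on the individual $\mathscr{H}$-classes.
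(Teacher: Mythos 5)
Your route is genuinely different from the paper's and, once one step is made precise, it works. The paper proves Lemma~\ref{lemma-3.9} directly: it takes a regular open $U(\textsf{0})$ with compact closure, forms the compact set $\{\textsf{0}\}\cup\bigcup_j[\operatorname{cl}(U(\textsf{0}))]_{i_0,j}$, and in a two-case analysis derives contradictions with compactness using only separate continuity, the shifts $\rho_{(1,1_G,0)}$, $\lambda_{(i_0,gh^{-1},i_0)}$ and the regular-openness of $U(\textsf{0})$; no joint-continuity theorem is used. You instead combine Lemma~\ref{lemma-3.8} (only finitely many classes miss $U$, hence cofinitely many classes $G_{i_0,j}$ meet a smaller neighbourhood $V$) with the transitivity $H\cdot(i_0,s_j,j)=G_{i_0,j}$ of the compact group $H=G_{i_0,i_0}$, which upgrades ``meets'' to ``is contained in'' as soon as one has the uniformity $H\cdot V\subseteq U(\textsf{0})$. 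That is an attractive and shorter argument, and the only place where care is needed is exactly the step you flag.

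As written, however, the appeal to ``a Lawson-type theorem'' for the map $H\times \textbf{B}(G,\theta)^0\to\textbf{B}(G,\theta)^0$ is not quite a citation that applies verbatim: this map is not a group action of $H$ on the whole semigroup (the identity $e=(i_0,1_G,i_0)$ of $H$ does not fix points of $G_{m,n}$ with $m<i_0$), and $H$ is not the group of units of $S$, which is the setting of the usual Ellis--Lawson joint-continuity statements. The gap is fillable in a few lines: since $\lambda_e$ is a continuous idempotent map, $eS=\lambda_e\bigl(\textbf{B}(G,\theta)^0\bigr)$ is a retract of a Hausdorff space, hence closed and therefore locally compact, it contains $\textsf{0}$ and all the classes $G_{i_0,j}$, and $H$ acts on $eS$ by left multiplication separately continuously with $e$ acting as the identity; by Ellis' theorem on separately continuous actions of locally compact groups on locally compact Hausdorff spaces (the same reference \cite{Ellis-1957} the paper uses), this action is jointly continuous, and since $hx=h(ex)$ for all $x$, joint continuity of $H\times S\to S$ follows by composing with $\lambda_e$. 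With that justification your tube-lemma extraction of $V$ with $H\cdot V\subseteq U(\textsf{0})$, the choice of $y_j\in V\cap G_{i_0,j}$ for cofinitely many $j$ via Lemma~\ref{lemma-3.8}, and the computation $(i_0,g,i_0)(i_0,s_j,j)=(i_0,gs_j,j)$ are all correct, and the symmetric argument with $Se$ and $(j,s,i_0)(i_0,g,i_0)=(j,sg,i_0)$ handles the second set. So: correct strategy, different from (and arguably slicker than) the paper's, but please replace the vague joint-continuity citation by the explicit reduction to the $H$-action on $eS$ (respectively $Se$) and Ellis' transformation-group theorem.
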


\begin{proof}
Suppose to the contrary that there exist a open neighbourhood $U_\textsf{0}$ of zero in $\left(\textbf{B}(G,\theta)^0,\tau_{\textbf{B}}^0\right)$ and a non-negative integer $i_0$ such that the set $\left\{j\in \mathbb{N}_0\colon G_{i_0,j}\nsubseteq U_\textsf{0}\right\}$ is infinite. Since the topological space  $\left(\textbf{B}(G,\theta)^0,\tau_{\textbf{B}}^0\right)$ is locally compact, we can take a regular open neighbourhood $U_\textsf{0}$ of the zero
with compact closure.

We consider the following two cases:
\begin{itemize}
  \item[$(1)$] there exists a non-negative integer $j_0$ such that $\left[U_\textsf{0}\right]_{i_0,j}\neq G_{i_0,j}$ for all $j\geqslant j_0$;
  \item[$(2)$] for every positive integer $k$ there exists a positive integer $n>k$ such that $\left[U_\textsf{0}\right]_{i_0,n}= G_{i_0,n}$.
\end{itemize}

Suppose case $(1)$ holds. Since every maximal subgroup of $\left(\textbf{B}(G,\theta)^0,\tau_{\textbf{B}}^0\right)$ is compact, by  Proposition~\ref{proposition-2.4}$(ii)$  without loss of generality we may assume that $j_0=0$. By Lemma~\ref{lemma-3.1} every $\mathscr{H}$-class $G_{i,j}$ is an open subset of $\left(\textbf{B}(G,\theta)^0,\tau_{\textbf{B}}^0\right)$ and hence the set
\begin{equation*}
  \mathfrak{H}_{i_0}^0(U_\textsf{0})=\{\textsf{0}\}\cup \bigcup_{j\in \mathbb{N}_0}\left[\operatorname{cl}_{\textbf{B}(G,\theta)^0}(U_\textsf{0})\right]_{i_0,j}
\end{equation*}
is compact. By Lemma~\ref{lemma-3.1} the family $\mathscr{U}_{i_0}=\left\{\left\{U_\textsf{0}\right\},\left\{G_{i_0,j}\colon j\in \mathbb{N}_0\right\}\right\}$ is an open cover of the compactum $\mathfrak{H}_{i_0}^0(U_\textsf{0})$ and hence there exists a positive integer $j_1$ such that $\left[U_\textsf{0}\right]_{i_0,n}=\left[\operatorname{cl}_{\textbf{B}(G,\theta)^0}(U_\textsf{0})\right]_{i_0,n}$ for all $n\geqslant j_1$. Since $\left(\textbf{B}(G,\theta)^0,\tau_{\textbf{B}}^0\right)$ is a semitopological semigroup, a set $V_\textsf{0}=\rho_{(1,1_G,0)}^{-1}( U_\textsf{0})$ is  an open neighbourhood of zero. By Lemma~\ref{lemma-3.1} the family $\mathscr{V}_{i_0}=\left\{\left\{V_\textsf{0}\right\},\left\{G_{i_0,j}\colon j\in \mathbb{N}_0\right\}\right\}$ is an open cover of the compactum $\mathfrak{H}_{i_0}^0(U_\textsf{0})$ and hence there exists a positive integer $j_2\geqslant j_1$ such that $\left[V_\textsf{0}\right]_{i_0,n}=\left[U_\textsf{0}\right]_{i_0,n}=\left[\operatorname{cl}_{\textbf{B}(G,\theta)^0}(U_\textsf{0})\right]_{i_0,n}$, for all integers $n\geqslant j_2$. Indeed, since $(i_0,g,j)\cdot (1,1_G,0)=(i_0,g,j-1)$ for all positive integers $j$ and any $g\in G$, for all integers $n\geqslant j_2$, we have that
\begin{equation*}
  \left[V_\textsf{0}\right]_{i_0,n}=\left[U_\textsf{0}\right]_{i_0,n}= \left[\operatorname{cl}_{\textbf{B}(G,\theta)^0}(U_\textsf{0})\right]_{i_0,n}.
\end{equation*}
We put
\begin{equation*}
  \widetilde{U}_\textsf{0}=U_\textsf{0}\setminus\left(G_{i_0,0}\cup\ldots\cup G_{i_0,j_2-1}\right).
\end{equation*}
By Lemma~\ref{lemma-3.1}, $\widetilde{U}_\textsf{0}$ is an open neighbourhood of zero in $\left(\textbf{B}(G,\theta)^0,\tau_{\textbf{B}}^0\right)$ such that $\left[\widetilde{U}_\textsf{0}\right]_{i_0,n}=\left[U_\textsf{0}\right]_{i_0,n}= \left[\operatorname{cl}_{\textbf{B}(G,\theta)^0}(U_\textsf{0})\right]_{i_0,n}$, for all integers $n\geqslant j_2$. Since $\left(\textbf{B}(G,\theta)^0,\tau_{\textbf{B}}^0\right)$  is locally compact, without loss of generality we may assume that the neighbourhood $U_\textsf{0}$ is a regular open set. This implies that $\widetilde{U}_\textsf{0}$ is a regular open set too. So there exist distinct $g,h\in G$ such that $(i_0,g,n)\notin \left[U_\textsf{0}\right]_{i_0,n}$ and $(i_0,h,n)\in \left[U_\textsf{0}\right]_{i_0,n}$, for all integers $n\geqslant j_2$. But $(i_0,gh^{-1},i_0)\cdot(i_0,h,n)=(i_0,g,n)$  for all integers $n\geqslant j_2$. Let $W_\textsf{0}=\lambda_{(i_0,gh^{-1},i_0)}^{-1}\big(\widetilde{U}_\textsf{0}\big)$. Hence we have that
\begin{equation*}
\big[\widetilde{U}_\textsf{0}\big]_{i_0,n}\setminus \left[W_\textsf{0}\right]_{i_0,n}\neq\varnothing  \qquad \hbox{and} \qquad \left[W_\textsf{0}\right]_{i_0,n}\setminus\big[\widetilde{U}_\textsf{0}\big]_{i_0,n}\neq\varnothing,
\end{equation*}
for all integers $n\geqslant j_2$. Then by Lemma~\ref{lemma-3.1} the family $\mathscr{W}_{i_0}=\left\{\left\{W_\textsf{0}\right\},\left\{G_{i_0,j}\colon j\in \mathbb{N}_0\right\}\right\}$ is an open cover of $\mathfrak{H}_{i_0}^0(U_\textsf{0})$ which has not a finite subcover. This contradicts the compactness of $\mathfrak{H}_{i_0}^0(U_\textsf{0})$, and hence the set $\left\{j\in \mathbb{N}_0\colon G_{i_0,j}\nsubseteq U_\textsf{0}\right\}$ is finite.

Suppose case $(2)$ holds. Then there are infinitely many non-negative integers $j$ such that
$\left[U_\textsf{0}\right]_{i_0,j}=G_{i_0,j}$ but  $\left[U_\textsf{0}\right]_{i_0,j-1}\neq G_{i_0,j-1}$.
Since every maximal subgroup of $\left(\textbf{B}(G,\theta)^0,\tau_{\textbf{B}}^0\right)$ is compact, Lemma~\ref{lemma-3.1} implies that every $\mathscr{H}$-class $G_{i,j}$ is an open subset of $\left(\textbf{B}(G,\theta)^0,\tau_{\textbf{B}}^0\right)$ and hence the set
\begin{equation*}
  \mathfrak{H}_{i_0}^0(U_\textsf{0})=\{\textsf{0}\}\cup \bigcup_{j\in \mathbb{N}_0} \left[\operatorname{cl}_{\textbf{B}(G,\theta)^0}(U_\textsf{0})\right]_{i_0,j}
\end{equation*}
is compact. Let $V_\textsf{0}=\rho_{(1,1_G,0)}^{-1}(U_\textsf{0})$. By Lemma~\ref{lemma-3.1} the family $\mathscr{V}_{i_0}=\left\{\left\{V_\textsf{0}\right\},\left\{G_{i_0,j}\colon j\in \mathbb{N}_0\right\}\right\}$ is an open cover of the compactum $\mathfrak{H}_{i_0}^0(U_\textsf{0})$. Then the continuity of the right shift $\rho_{(1,1_G,0)}$ and the equality $(i_0,g,j)\cdot (1,1_G,0)=(i_0,g,j-1)$ imply that $\left[V_\textsf{0}\right]_{i_0,j}\neq G_{i_0,j}$ for infinitely many non-negative integers $j$. Also, the equality $(i_0,g,j)\cdot (1,1_G,0)=(i_0,g,j-1)$ and assumption of case $(2)$ imply that $\left[U_\textsf{0}\right]_{i_0,j}\setminus \left[V_\textsf{0}\right]_{i_0,j}\neq\varnothing$ for infinitely many non-negative integers $j$. Hence, the open cover $\mathscr{V}_{i_0}$ of $\mathfrak{H}_{i_0}^0(U_\textsf{0})$ does not contain a finite subcover, which contradicts the compactness of $\mathfrak{H}_{i_0}^0(U_\textsf{0})$, and hence the set $\left\{j\in \mathbb{N}_0\colon G_{i_0,j}\nsubseteq U_\textsf{0}\right\}$ is finite.

The proof of the statement that the set $\left\{j\in \mathbb{N}_0\colon G_{j,i_0}\nsubseteq U_\textsf{0}\right\}$ is finite, is similar.
\end{proof}

\begin{lemma}\label{lemma-3.10}
Let $\left(\textbf{B}(G,\theta)^0,\tau_{\textbf{B}}^0\right)$ be a locally compact semitopological semigroup with a compact group of units and non-isolated zero \emph{$\textsf{0}$}. Then for every open neighbourhood \emph{$U_\textsf{0}$} of zero in \emph{$\left(\textbf{B}(G,\theta)^0,\tau_{\textbf{B}}^0\right)$} the set
\emph{$A_{U_\textsf{0}}=\left\{(i,j)\in \mathbb{N}_0\times \mathbb{N}_0\colon G_{i,j}\nsubseteq U_\textsf{0}\right\}$} is finite.
\end{lemma}

\begin{proof}
Suppose to the contrary that there exists an open neighbourhood $U_\textsf{0}$ of zero in \emph{$\left(\textbf{B}(G,\theta)^0,\tau_{\textbf{B}}^0\right)$} such that the set $A_{U_\textsf{0}}$ is infinite. Since the topological space  $\left(\textbf{B}(G,\theta)^0,\tau_{\textbf{B}}^0\right)$ is locally compact without loss of generality we may assume that the closure $\operatorname{cl}_{\textbf{B}(G,\theta)^0}(U_\textsf{0})$ of $U_\textsf{0}$ is compact and the neighbourhood $U_\textsf{0}$ is regular open. By Lemma~\ref{lemma-3.9}, for every positive integer $k$ there exists $(i,j)\in A_{U(0)}$ such that $i>k$ and $j>k$.

By induction we define an infinite sequence $\left\{\left(i_n,j_n\right)\right\}_{n\in\mathbb{N}}$ of elements of the set $A_{U_\textsf{0}}$ in the following way. By the assumption, there exists the smallest non-negative integer  $i_1$ such that $G_{i_1,j}\nsubseteq U_\textsf{0}$, $j\in \mathbb{N}_0$. By Lemma~\ref{lemma-3.9} there exits $j_1=\max\left\{j\in \mathbb{N}_0\colon G_{i_1,j}\nsubseteq U_\textsf{0}\right\}$.

At $k+1$-th step of induction we define pair $\left(i_{k+1},j_{k+1}\right)\in A_{U_\textsf{0}}$ as follows. Let $i_{k+1}$ be the smallest non-negative integer $>i_{k}$ such that $G_{i_{k+1},j}\nsubseteq U_\textsf{0}$, $j\in \mathbb{N}_0$. By Lemma~\ref{lemma-3.9} there exits $j_{k+1}=\max\left\{j\in \mathbb{N}_0\colon G_{i_{k+1},j}\nsubseteq U_\textsf{0}\right\}$.
Our assumption and Lemma~\ref{lemma-3.9} imply that so ordered pair $(i_{k+1},j_{k+1})$ there exists in $A_{U_\textsf{0}}$.

Now, by the separate continuity of the semigroup operation in $\left(\textbf{B}(G,\theta)^0,\tau_{\textbf{B}}^0\right)$ there exists an open neighbourhood $V_\textsf{0}\subseteq U_\textsf{0}$ of zero in $\left(\textbf{B}(G,\theta)^0,\tau_{\textbf{B}}^0\right)$ such that $V_\textsf{0}\cdot (1,1_G,0)\subseteq U_\textsf{0}$. Then the construction of the sequence $\left\{\left(i_n,j_n\right)\right\}_{n\in\mathbb{N}}$ implies that
\begin{equation*}
  \left[V_\textsf{0}\right]_{i_n,j_n}\subseteq \left[U_\textsf{0}\right]_{i_n,j_n}\neq G_{i_n,j_n} \qquad \hbox{and} \qquad \left[U_\textsf{0}\right]_{i_n,j_n+1}= G_{i_n,j_n+1},
\end{equation*}
for each $\left(i_n,j_n\right)$.
By Lemma~\ref{lemma-3.1} the family $\mathscr{V}=\left\{\left\{V_\textsf{0}\right\},\left\{G_{i,j}\colon i,j\in \mathbb{N}_0\right\}\right\}$ is an open cover of the compact set  $\operatorname{cl}_{\textbf{B}(G,\theta)^0}(U_\textsf{0})$. Then the continuity of the right shift $\rho_{(1,1_G,0)}$ implies that $\left[V_\textsf{0}\right]_{i_n,j_n+1}\neq G_{i_n,j_n+1}$ for infinitely many pairs $\left(i_n,j_n+1\right)$. This implies that $\left[U_\textsf{0}\right]_{i_n,j_n+1}\setminus \left[V_\textsf{0}\right]_{i_n,j_n+1}\neq\varnothing$ for infinitely many pairs $\left(i_n,j_n+1\right)$.
Then above arguments imply that the cover $\mathscr{V}$ has not a finite subcover, which contradicts the compactness of $\operatorname{cl}_{\textbf{B}(G,\theta)^0}(U_\textsf{0})$.
\end{proof}

\begin{example}\label{example-3.11}
Let $(G,\tau_G)$ be a semitopological group, $\theta\colon G\to G$ be a continuous homomorphism and $\mathscr{B}_G(g)$ be a base of the topology $\tau_G$ at a point $g\in G$.

On the semigroup $\textbf{B}(G,\theta)^0$ we define a topology $\tau^{\oplus}_{\textbf{B}}$ in the following way:
\begin{itemize}
  \item[$(i)$] at any non-zero element $(i,g,j)\in G_{i,j}$ of the semigroup $\textbf{B}(G,\theta)^0$ the family
  \begin{equation*}
    \mathscr{B}^{\oplus}_{\textbf{B}}(i,g,j)=\left\{U_{i,j}\colon U\in \mathscr{B}_G(g)\right\}
  \end{equation*}
  is a base of the topology $\tau^{\oplus}_{\textbf{B}}$ at the point $(i,g,j)\in\textbf{B}(G,\theta)^0$;
  \item[$(ii)$] zero $\textsf{0}\in\textbf{B}(G,\theta)^0$ is an isolated point in $\left(\textbf{B}(G,\theta)^0,\tau^{\oplus}_{\textbf{B}}\right)$.
\end{itemize}
Simple verifications show that the semigroup operation in $\left(\textbf{B}(G,\theta)^0,\tau^{\oplus}_{\textbf{B}}\right)$ is separately continuous.
\end{example}

In Example~\ref{example-3.12}, we  extend the construction proposed in Example~\ref{example-3.5} onto compact semitopological $0$-bisimple inverse $\omega$-semigroup with a compact maximal subgroup.

\begin{example}\label{example-3.12}
Let $(G,\tau_G)$ be a compact Hausdorff topological group, $\theta\colon G\to G$ be a continuous homomorphism and $\mathscr{B}_G(g)$ be a base of the topology $\tau_G$ at a point $g\in G$.

On the semigroup $\textbf{B}(G,\theta)^0$ we define a topology $\tau^{\operatorname{\textsf{Ac}}}_{\textbf{B}}$ by pointing it base $\mathscr{B}^{\operatorname{\textsf{Ac}}}_{\textbf{B}}(x)$ at each point $x$ of $\textbf{B}(G,\theta)^0$, namely
\begin{itemize}
  \item[$(i)$] at any non-zero element $(i,g,j)\in G_{i,j}$ of the semigroup $\textbf{B}(G,\theta)^0$
  \begin{equation*}
    \mathscr{B}^{\operatorname{\textsf{Ac}}}_{\textbf{B}}(i,g,j)=\left\{U_{i,j}\colon U\in \mathscr{B}_G(g)\right\};
  \end{equation*}

  \item[$(ii)$] $\mathscr{B}(\textsf{0})=\left\{U_{(i_1,j_1),\ldots,(i_k,j_k)}\colon (i_1,j_1),\ldots,(i_k,j_k)\in \mathbb{N}_0\times \mathbb{N}_0\right\}$, where
   \begin{equation*}
     U_{(i_1,j_1),\ldots,(i_k,j_k)}=\textbf{B}(G,\theta)^0\setminus\left\{G_{i_1,j_1}\cup\cdots\cup G_{i_k,j_k}\right\}.
   \end{equation*}
\end{itemize}
I.e., $\tau^{\operatorname{\textsf{Ac}}}_{\textbf{B}}$ is the topology of the Alexandroff one-point compactification of the locally compact space $\bigoplus\left\{G_{i,j}\colon i,j\in \mathbb{N}_0\right\}$ (where for any $i,j\in \mathbb{N}_0$ the space $G_{i,j}$ is homeomorphic to the compact group $(G,\tau_G)$ by the map $(i,g,j)\mapsto g$),  with the remainder $\{\textsf{0}\}$. Simple routine verifications show that the semigroup operation in $\left(\textbf{B}(G,\theta)^0,\tau^{\operatorname{\textsf{Ac}}}_{\textbf{B}}\right)$ is separately continuous.
\end{example}

Proposition~\ref{proposition-3.7} and Lemma~\ref{lemma-3.9} imply the following dichotomy for locally compact semitopological $0$-bisimple inverse $\omega$-semigroups with a compact maximal subgroup:

\begin{theorem}\label{theorem-3.13}
Let $S$ be a locally compact semitopological $0$-bisimple inverse $\omega$-semigroup with a compact maximal subgroups $G$ distinct from zero. Then $S$ is topologically isomorphic either to $\left(\textbf{B}(G,\theta)^0,\tau^{\oplus}_{\textbf{B}}\right)$ or to $\left(\textbf{B}(G,\theta)^0,\tau^{\operatorname{\textsf{Ac}}}_{\textbf{B}}\right)$, for some continuous homomorphism $\theta\colon G\to G$.
\end{theorem}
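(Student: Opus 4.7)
The plan is to synthesize the structural results already established in Section~3 and read off the topology at zero via the standard dichotomy ``isolated vs.\ non-isolated''. First I would invoke Theorem~\ref{theorem-2.7} to replace $S$ (up to topological isomorphism) by a topological Bruck--Reilly extension $\left(\textbf{B}(G,\theta)^0,\tau_{\textbf{B}}^0\right)$, where $(G,\tau_G)$ is a locally compact topological group. Since the maximal subgroup $G$ is compact by hypothesis, Lemma~\ref{lemma-3.1} guarantees that every non-zero $\mathscr{H}$-class $G_{i,j}$ is a clopen subset of $\left(\textbf{B}(G,\theta)^0,\tau_{\textbf{B}}^0\right)$. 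Moreover, by Definition~\ref{definition-2.3} and Proposition~\ref{proposition-2.4}$(ii)$, each $G_{i,j}$ is homeomorphic to $(G,\tau_G)$ via $(i,g,j)\mapsto g$. Hence on each piece $G_{i,j}$ the subspace topology agrees with the one prescribed in both of the model topologies $\tau^{\oplus}_{\textbf{B}}$ and $\tau^{\operatorname{\textsf{Ac}}}_{\textbf{B}}$ of Examples~\ref{example-3.11} and \ref{example-3.12}.

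Next I would split into two cases according to whether the zero $\textsf{0}$ is isolated in $\left(\textbf{B}(G,\theta)^0,\tau_{\textbf{B}}^0\right)$. If $\textsf{0}$ is isolated, Proposition~\ref{proposition-3.7} says that the underlying space is the topological sum $\bigoplus\{G_{i,j}\colon i,j\in N\}\oplus\{\textsf{0}\}$. Comparing this with the definition of $\tau^{\oplus}_{\textbf{B}}$ in Example~\ref{example-3.11}, the identity of $\textbf{B}(G,\theta)^0$ becomes a topological isomorphism onto $\left(\textbf{B}(G,\theta)^0,\tau^{\oplus}_{\textbf{B}}\right)$, because a homeomorphism between topological sums is determined by homeomorphisms on each summand and we already have $G_{i,j}\cong G$.

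If $\textsf{0}$ is non-isolated, I would use Lemma~\ref{lemma-3.10}: every open neighbourhood $U(\textsf{0})$ of zero contains a set of the form
\[
U_{(i_1,j_1),\ldots,(i_m,j_m)}=\textbf{B}(G,\theta)^0\setminus\bigl(G_{i_1,j_1}\cup\cdots\cup G_{i_m,j_m}\bigr).
\]
Conversely, each such $U_{(i_1,j_1),\ldots,(i_m,j_m)}$ is itself open (it is the complement of a finite union of clopen $\mathscr{H}$-classes) and contains $\textsf{0}$. Therefore the family $\mathscr{B}(\textsf{0})$ from Example~\ref{example-3.12} is a neighbourhood base at $\textsf{0}$ in $\left(\textbf{B}(G,\theta)^0,\tau_{\textbf{B}}^0\right)$. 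Combined with the fact that the $G_{i,j}$'s carry the topology of $G$, this identifies $\tau_{\textbf{B}}^0$ with $\tau^{\operatorname{\textsf{Ac}}}_{\textbf{B}}$, so the identity is again a topological isomorphism onto $\left(\textbf{B}(G,\theta)^0,\tau^{\operatorname{\textsf{Ac}}}_{\textbf{B}}\right)$.

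The main obstacle of the theorem is in fact discharged by Lemma~\ref{lemma-3.10}, whose proof does the delicate compactness-based work of forcing cofinite behavior of neighbourhoods of zero in both coordinates; once that lemma is in hand, the proof of Theorem~\ref{theorem-3.13} is essentially a bookkeeping step matching two explicit models. The only remaining care is to confirm (using Proposition~\ref{proposition-2.4}$(ii)$ and the standing compactness of $G$) that the group structure, the homomorphism $\theta$, and the homeomorphism $G_{i,j}\cong G$ transferred from Theorem~\ref{theorem-2.7} coincide with those in Examples~\ref{example-3.11} and \ref{example-3.12}, so that the identity map is not merely a homeomorphism but a topological isomorphism of semigroups.
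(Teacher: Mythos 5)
Your proposal is correct and takes essentially the same route as the paper, which obtains the theorem by combining Proposition~\ref{proposition-3.7} (isolated zero, yielding $\tau^{\oplus}_{\textbf{B}}$) with Lemma~\ref{lemma-3.10} (non-isolated zero, showing the sets $\textbf{B}(G,\theta)^0\setminus\left(G_{i_1,j_1}\cup\cdots\cup G_{i_m,j_m}\right)$ form a base at zero, i.e.\ $\tau^{\operatorname{\textsf{Ac}}}_{\textbf{B}}$), exactly the case split and bookkeeping you carry out. The only cosmetic remark is that for off-diagonal classes $G_{i,j}$ the homeomorphism $(i,g,j)\mapsto g$ onto $(G,\tau_G)$ comes from Proposition~\ref{proposition-2.4}$(i)$ together with Definition~\ref{definition-2.3}, rather than from item $(ii)$ alone, which changes nothing in the argument.
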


Since the bicyclic monoid $\mathscr{C}(p,q)$ does not embed into any compact topological semigroup \cite{Anderson-Hunter-Koch-1965}, Theorem~\ref{theorem-3.13} implies the following corollary.

\begin{corollary}\label{corollary-3.14}
If $S$ be a locally compact topological $0$-bisimple inverse $\omega$-semigroup with a compact maximal subgroup $G$ distinct from zero, then $S$ is topologically isomorphic to $\left(\textbf{B}(G,\theta)^0,\tau^{\oplus}_{\textbf{B}}\right)$, for some continuous homomorphism $\theta\colon G\to G$.
\end{corollary}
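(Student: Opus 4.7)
The plan is to invoke Theorem~\ref{theorem-3.13} to reduce to a two-way dichotomy, and then rule out the compact alternative using the non-embeddability of the bicyclic monoid into compact Hausdorff topological semigroups. Since a topological semigroup is a fortiori a semitopological semigroup, Theorem~\ref{theorem-3.13} already tells us that the underlying space of $S$ is topologically isomorphic (as a semitopological semigroup) to one of $\left(\textbf{B}(G,\theta)^0,\tau^{\oplus}_{\textbf{B}}\right)$ or $\left(\textbf{B}(G,\theta)^0,\tau^{\operatorname{\textsf{Ac}}}_{\textbf{B}}\right)$. It remains to exclude the second possibility under the hypothesis that multiplication in $S$ is jointly continuous.

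The observation is that $\textbf{B}(G,\theta)^0$ algebraically contains a copy of the bicyclic monoid. Indeed, the multiplication in $\textbf{BR}(S,\theta)$ shows that
\begin{equation*}
(i,1_G,j)\cdot(m,1_G,n)=\bigl(i+m-\min\{j,m\},\,1_G,\,j+n-\min\{j,m\}\bigr),
\end{equation*}
since $\theta^k(1_G)=1_G$ for every $k$. Thus the set $C=\{(i,1_G,j)\colon i,j\in N\}$ is a subsemigroup of $\textbf{B}(G,\theta)^0$ isomorphic to $\mathscr{C}(p,q)$.

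Now suppose, toward a contradiction, that $S$ is topologically isomorphic to $\left(\textbf{B}(G,\theta)^0,\tau^{\operatorname{\textsf{Ac}}}_{\textbf{B}}\right)$. By construction in Example~\ref{example-3.12}, this space is the Alexandroff one-point compactification of the locally compact topological sum of the $\mathscr{H}$-classes, hence a compact Hausdorff space. Since $S$ is a topological semigroup (joint continuity of multiplication), its image $\left(\textbf{B}(G,\theta)^0,\tau^{\operatorname{\textsf{Ac}}}_{\textbf{B}}\right)$ is a compact Hausdorff topological semigroup. But then this compact topological semigroup contains the subsemigroup $C\cong\mathscr{C}(p,q)$, contradicting the Anderson--Hunter--Koch theorem (quoted from \cite{Anderson-Hunter-Koch-1965}) that the bicyclic monoid does not embed into any compact Hausdorff topological semigroup.

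Consequently, only the alternative $S\cong\left(\textbf{B}(G,\theta)^0,\tau^{\oplus}_{\textbf{B}}\right)$ can occur, which is the desired conclusion. There is essentially no obstacle here: the whole content is packaged into the dichotomy of Theorem~\ref{theorem-3.13} and the classical non-embedding theorem. The only mildly delicate point is the explicit identification of an algebraic copy of $\mathscr{C}(p,q)$ inside $\textbf{B}(G,\theta)^0$, and this is immediate from the Bruck--Reilly multiplication formula applied to elements whose middle coordinate is $1_G$.
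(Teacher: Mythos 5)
Your argument is correct and coincides with the paper's: the paper also derives the corollary directly from the dichotomy of Theorem~\ref{theorem-3.13} together with the Anderson--Hunter--Koch theorem that $\mathscr{C}(p,q)$ does not embed into a compact Hausdorff topological semigroup, which excludes the compact alternative. Your explicit identification of the copy $\{(i,1_G,j)\colon i,j\in N\}$ of the bicyclic monoid inside $\textbf{B}(G,\theta)^0$ merely spells out a detail the paper leaves implicit.
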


Later we shall need the following trivial lemma, which follows from the separate continuity of the semigroup operation in semitopological semigroups.

\begin{lemma}\label{lemma-3.15}
Let $S$ be a semitopological semigroup and $I$ be a compact ideal in $S$. Then the Rees-quotient semigroup $S/I$ with the quotient topology is a semitopological semigroup.
\end{lemma}

\begin{theorem}\label{theorem-3.16}
Let \emph{$(\textbf{B}(G,\theta)^I,\tau)$} be a locally compact semitopological bisimple inverse $\omega$-semigroup with a compact group of units $G$ and an adjoined compact ideal $I$, i.e., \emph{$\textbf{B}(G,\theta)^I=\textbf{B}(G,\theta)\sqcup I$}. Then either \emph{$(\textbf{B}(G,\theta)^I,\tau)$} is a compact semitopological semigroup or the ideal $I$ is open.
\end{theorem}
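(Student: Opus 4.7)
The plan is to collapse the compact ideal $I$ to a single point via the Rees construction, apply Theorem~\ref{theorem-3.13} to the resulting quotient, and then lift the dichotomy back to $(\textbf{B}(G,\theta)^I,\tau)$. Since $I$ is compact and the ambient space is Hausdorff, $I$ is closed, so the Rees quotient map $\pi\colon \textbf{B}(G,\theta)^I\to \textbf{B}(G,\theta)^I/I$ is a continuous closed surjection whose fibres are either singletons in $\textbf{B}(G,\theta)$ or the compact set $I$. Thus $\pi$ is a perfect map, and the quotient $\textbf{B}(G,\theta)^I/I$ inherits the Hausdorff and locally compact properties from $(\textbf{B}(G,\theta)^I,\tau)$; by Lemma~\ref{lemma-3.15} it is moreover a semitopological semigroup. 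Algebraically the quotient is the Reilly semigroup with adjoined zero $\textbf{B}(G,\theta)^0$, whose unique non-zero $\mathscr{D}$-class has maximal subgroup $G$, compact by hypothesis.

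Consequently the quotient satisfies all the assumptions of Theorem~\ref{theorem-3.13} and is therefore topologically isomorphic either to $\left(\textbf{B}(G,\theta)^0,\tau^{\oplus}_{\textbf{B}}\right)$ or to the compact semigroup $\left(\textbf{B}(G,\theta)^0,\tau^{\operatorname{\textsf{Ac}}}_{\textbf{B}}\right)$. In the first case, zero $\textsf{0}$ is an isolated point of $\textbf{B}(G,\theta)^I/I$ (see Example~\ref{example-3.11}$(ii)$), so by the very definition of the quotient topology $I=\pi^{-1}(\{\textsf{0}\})$ is open in $(\textbf{B}(G,\theta)^I,\tau)$, which gives the second alternative of the theorem. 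In the second case, the quotient is compact, and because $\pi$ is perfect, the preimage $\textbf{B}(G,\theta)^I=\pi^{-1}\bigl(\textbf{B}(G,\theta)^I/I\bigr)$ is also compact, which gives the first alternative.

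The main technical ingredient is the perfectness of $\pi$, from which the Hausdorffness and local compactness of the quotient follow, and through which compactness of the quotient lifts back to compactness of $\textbf{B}(G,\theta)^I$. Both assertions are standard facts about quotients collapsing a compact closed subspace in a Hausdorff locally compact space, so no serious obstacle is anticipated and the argument reduces to a clean invocation of Theorem~\ref{theorem-3.13} and Lemma~\ref{lemma-3.15}.
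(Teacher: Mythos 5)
Your proposal is correct and follows essentially the same route as the paper: collapse the compact ideal via the Rees quotient (Lemma~\ref{lemma-3.15}), identify the quotient with a Hausdorff locally compact semitopological $\textbf{B}(G,\theta)^0$, apply the dichotomy of Theorem~\ref{theorem-3.13}, and translate the two alternatives back into ``$I$ open'' or ``$\textbf{B}(G,\theta)^I$ compact''. The only difference is technical packaging: you justify the Hausdorff local compactness of the quotient and the lifting of compactness by observing that the collapse map is perfect (closed with compact fibres, so preimages of compact sets are compact), whereas the paper verifies that the natural homomorphism is hereditarily quotient, invokes Din' N'e T'ong's theorem for local compactness, and lifts compactness by a direct open-cover argument; both justifications are sound.
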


\begin{proof}
Suppose that the ideal $I$ is not open. By Lemma~\ref{lemma-3.1} the Rees-quotient semigroup $\textbf{B}(G,\theta)^I/I$ with the quotient topology $\tau_{\operatorname{\textsf{q}}}$ is a semitopological semigroup. Let $\pi\colon \textbf{B}(G,\theta)^I\to \textbf{B}(G,\theta)^I/I$ be the natural homomorphism which is a quotient map. It is obvious that the Rees-quotient semigroup $\textbf{B}(G,\theta)^I/I$ is isomorphic to the Reilly semigroup $\textbf{B}(G,\theta)^0$ and the image $\pi(I)$ is zero $\textsf{0}$ of the semigroup $\textbf{B}(G,\theta)^0$.

Now we shall show that the natural homomorphism $\pi\colon \textbf{B}(G,\theta)^I\to \textbf{B}(G,\theta)^I/I$ is a hereditarily quotient map.

We shall show that for every open neighbourhood $U(I)$ of the ideal $I$ in the space $(\textbf{B}(G,\theta)^I,\tau)$ the image  $\pi(U(I))$ is an open neighbourhood of the zero $\textsf{0}$ in the space $\left(\textbf{B}(G,\theta)^I/I,\tau_{\operatorname{\textsf{q}}}\right)$. Indeed, $\textbf{B}(G,\theta)^I\setminus U(I)$ is a closed subset of $(\textbf{B}(G,\theta)^I,\tau)$. Also, since the restriction $\pi|_{\textbf{B}(G,\theta)}\colon \textbf{B}(G,\theta)\to \pi(\textbf{B}(G,\theta))$ of the natural homomorphism $\pi\colon \textbf{B}(G,\theta)^I\to \textbf{B}(G,\theta)^I/I$ is one-to-one,  $\pi(\textbf{B}(G,\theta)^I\setminus U(I))$ is a closed subset of $\left(\textbf{B}(G,\theta)^I/I,\tau_{\operatorname{\textsf{q}}}\right)$. So $\pi(U(I))$ is an open neighbourhood of the zero $\textsf{0}$ of the semigroup $\left(\textbf{B}(G,\theta)^I/I,\tau_{\operatorname{\textsf{q}}}\right)$, and hence the natural homomorphism $\pi\colon \textbf{B}(G,\theta)^I\to \textbf{B}(G,\theta)^I/I$ is a hereditarily quotient map.

Since $I$ is a compact ideal of the semitopological semigroup $\left(\textbf{B}(G,\theta)^I,\tau\right)$, $\pi^{-1}(y)$ is a compact subset of $\left(\textbf{B}(G,\theta)^I,\tau\right)$ for every $y\in \textbf{B}(G,\theta)^I/I$. By Din' N'e T'ong's Theorem (see \cite{Din'-N'e-T'ong-1963} or \cite[3.7.E]{Engelking-1989}),
$\left(\textbf{B}(G,\theta)^I/I,\tau_{\operatorname{\textsf{q}}}\right)$ is a Hausdorff locally compact space. Since $I$ is not open by Theorem~\ref{theorem-3.13} the semitopological semigroup $\left(\textbf{B}(G,\theta)^I/I,\tau_{\operatorname{\textsf{q}}}\right)$ is topologically isomorphic to $\left(\textbf{B}(G,\theta)^0,\tau^{\operatorname{\textsf{Ac}}}_{\textbf{B}}\right)$ and hence it is compact.

At last we shall prove that the space $\left(\textbf{B}(G,\theta)^I,\tau\right)$ is compact. Let $\mathscr{U}=\left\{U_\alpha\colon\alpha\in\mathscr{I}\right\}$ be an arbitrary open cover of $\left(\textbf{B}(G,\theta)^I,\tau\right)$. Since $I$ is compact, it can be covered by a some finite subfamily $\mathscr{U}'=\left\{U_{\alpha_1},\ldots, U_{\alpha_n}\right\}$ of $\mathscr{U}$. Put $U=U_{\alpha_1}\cup\cdots\cup U_{\alpha_n}$. Then $\textbf{B}(G,\theta)^I\setminus U$ is a closed subset of $\left(\textbf{B}(G,\theta)^I,\tau\right)$. Also, since the restriction $\pi|_{\textbf{B}(G,\theta)}\colon \textbf{B}(G,\theta)\to \pi(\textbf{B}(G,\theta))$ of the natural homomorphism $\pi\colon \textbf{B}(G,\theta)^I\to \textbf{B}(G,\theta)^I/I$ is one-to-one, $\pi(\textbf{B}(G,\theta)^I\setminus U)$ is a closed subset of $(\textbf{B}(G,\theta)^I/I,\tau_{\operatorname{\textsf{q}}})$, and hence the image $\pi(\textbf{B}(G,\theta)^I\setminus U)$ is compact, because the semigroup $\left(\textbf{B}(G,\theta)^I/I,\tau_{\operatorname{\textsf{q}}}\right)$ is compact. Thus, the set $\textbf{B}(G,\theta)^I\setminus U$ is compact, so there exists a finite subfamily $\mathscr{U}''$ of $\mathscr{U}$, covering $\textbf{B}(G,\theta)^I\setminus U$. Then $\mathscr{U}'\cup \mathscr{U}''$ is a finite cover of $\left(\textbf{B}(G,\theta)^I,\tau\right)$. Hence the space $\left(\textbf{B}(G,\theta)^I,\tau\right)$ is compact as well.
\end{proof}

Since every Bruck--Reilly extension contains the bicyclic monoid ${\mathscr{C}}(p,q)$ and compact topological semigroup does not contain the semigroup ${\mathscr{C}}(p,q)$, Theorem~\ref{theorem-3.16} implies the following corollary.

\begin{corollary}\label{corollary-3.17}
Let \emph{$(\textbf{B}(G,\theta)^I,\tau)$} be a locally compact topological bisimple inverse $\omega$-semi\-group with a compact group of units $G$ and an adjoined compact ideal $I$, i.e., \emph{$\textbf{B}(G,\theta)^I=\textbf{B}(G,\theta)\sqcup I$}. Then the ideal $I$ is open in \emph{$(\textbf{B}(G,\theta)^I,\tau)$}.
\end{corollary}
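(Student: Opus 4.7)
The plan is to apply Theorem~\ref{theorem-3.16} and obtain a contradiction from the compact alternative. Since $\left(\textbf{B}(G,\theta)^I,\tau\right)$ is, by hypothesis, a locally compact \emph{topological} (not merely semitopological) bisimple inverse $\omega$-semigroup with compact group of units $G$ and compact adjoined ideal $I$, Theorem~\ref{theorem-3.16} gives the dichotomy: either $\left(\textbf{B}(G,\theta)^I,\tau\right)$ is compact or $I$ is open. I intend to rule out the first alternative, leaving only the second.

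To rule out compactness, I will exhibit the bicyclic monoid $\mathscr{C}(p,q)$ as a subsemigroup of $\textbf{B}(G,\theta)\subseteq \textbf{B}(G,\theta)^I$. The natural candidate is the set $\left\{(i,1_G,j)\colon i,j\in N\right\}$: the Bruck--Reilly multiplication gives
\begin{equation*}
(i,1_G,j)\cdot(m,1_G,n)=\bigl(i+m-\min\{j,m\},\,1_G,\,j+n-\min\{j,m\}\bigr),
\end{equation*}
so the map $(i,1_G,j)\mapsto q^i p^j$ is a semigroup isomorphism of this subsemigroup onto $\mathscr{C}(p,q)$. Hence $\mathscr{C}(p,q)$ embeds algebraically in $\textbf{B}(G,\theta)^I$.

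If $\left(\textbf{B}(G,\theta)^I,\tau\right)$ were compact, this would give an embedding of $\mathscr{C}(p,q)$ as a subsemigroup of a Hausdorff compact topological semigroup, contradicting the classical theorem of Anderson, Hunter, and Koch \cite{Anderson-Hunter-Koch-1965} (already cited in the introduction) that stable topological semigroups—and in particular compact topological semigroups—do not contain the bicyclic monoid. Therefore the compact alternative of Theorem~\ref{theorem-3.16} is impossible under the topological-semigroup hypothesis, and the ideal $I$ must be open in $\left(\textbf{B}(G,\theta)^I,\tau\right)$.

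I do not expect any real obstacle: the only subtle point is noting that the present hypothesis strengthens ``semitopological'' in Theorem~\ref{theorem-3.16} to ``topological,'' which is exactly what is needed to invoke the Anderson--Hunter--Koch obstruction. No further calculations are required.
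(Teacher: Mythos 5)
Your proposal is correct and follows exactly the paper's route: invoke the dichotomy of Theorem~\ref{theorem-3.16} and exclude the compact alternative because $\textbf{B}(G,\theta)$ contains a copy of the bicyclic monoid $\mathscr{C}(p,q)$ (via the elements $(i,1_G,j)$), which cannot embed in a compact Hausdorff topological semigroup by the Anderson--Hunter--Koch theorem. The paper gives precisely this one-line argument, so no differences to report.
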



\section{On locally compact semitopological $0$-bisimple inverse $\omega$-semigroups which contain the additive group of integers as a maximal subgroup}\label{section-4}

The structure of a locally compact topological group with adjoined non-isolated zero was described by Hofmann in~\cite{Hofmann-1988}:

\begin{theorem}[{\cite[Theorem~I]{Hofmann-1988}}]\label{Hofmann-Theorem}
Let $S$ be a locally compact group with non-isolated zero and $G$ its maximal subgroup $S\setminus\{0\}$. Then
\begin{itemize}
  \item[$(i)$] $G$ contains a unique characteristic maximal compact subgroup $C$.
  \item[$(ii)$] If $0$ and $1$ lie in some connected subspace, then $S$ contains in its center a locally compact group with zero $M_0=M\cup\{0\}$ which is isomorphic to the multiplicative semigroup of all non-negative real numbers and $S$ is isomorphic to the quotient semigroup of the direct product $M_0\times C$ modulo the congruence relation identifying all points of $\{0\}\times C$.

      If $0$ and $1$ do not lie in any connected subspace of $S$, then $S$ contains a locally compact group with zero $M_0=M\setminus\{0\}$ which is isomorphic  to the set of real numbers $\{0\}\cup\left\{2^n\colon n{=}0, \pm1, \pm2,\ldots\right\}$ under multiplication and S is isomorphic to the union of $0$ and the semidirect product $M\rtimes_\alpha C$, where $\alpha$ is the inner automorphism $c\longmapsto g^{-1}cg$ with the generator $g$ of $M$ whose powers converge to $0$.

  \item[$(iii)$] If $C$ is any compact group and a any automorphism of it, then there exists a locally compact group with zero whose maximal compact group is isomorphic to $C$ and whose maximal group can be made isomorphic to $M\times C$ where $M$ are positive reals under multiplication or to $M\rtimes_\alpha C$ where in this case $M$ is infinite cyclic.
\end{itemize}
\end{theorem}

When $G^0$ is a semitopological semigroup which is a locally compact group with adjoined non-isolated zero then its structure is more complicated than Hofmann's results. Thus in this section we consider a simple case  of the additive group of integers $\mathbb{Z}_{+}$.

We shall denote by  $\mathbb{Z}_{+}^{0}$ the additive group of integers $\mathbb{Z}_{+}$ with adjoined zero $0$ which is a semitopological semigroup and by $0_{\mathbb{Z}_{+}}$ the identity of $\mathbb{Z}_{+}$.

\begin{remark}\label{remark-4.1}
If the space $\mathbb{Z}_{+}^{0}$ is locally compact then by Theorem 3.3.8 from \cite{Engelking-1989} the subspace $\mathbb{Z}_{+}$ is locally compact. Then by the Baire Category Theorem (see \cite[Theorem~3.9.3]{Engelking-1989}) the space $\mathbb{Z}_{+}$ has an isolated point, and since translations in $\mathbb{Z}_{+}$ are homeomorphisms, the subspace  $\mathbb{Z}_{+}$ is discrete.
\end{remark}

We need two technical lemmas.

\begin{lemma}\label{lema-4.2}
Let $\mathbb{N}_{+}^{0}$ be a  locally compact semitopological additive semigroup of positive integers with adjoined zero $0$. Then $\mathbb{N}$ is a discrete subspace of $\mathbb{N}_{+}^{0}$ and the space $\mathbb{N}_{+}^{0}$ is either compact or discrete.
\end{lemma}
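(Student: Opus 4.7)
My plan has two halves, (A) showing $\mathbb{N}$ is discrete in $\mathbb{N}_{+}^{0}$ and (B) establishing the compact/discrete dichotomy. The principal obstacle is that, unlike the group $\mathbb{Z}_{+}$ treated in Remark~\ref{remark-4.1}, translations in the semigroup $\mathbb{N}_{+}$ are not homeomorphisms of $\mathbb{N}_{+}^{0}$: their ranges miss an initial segment, so isolation of one point cannot simply be transported to every other point.

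For (A), Hausdorffness makes $\{0\}$ closed, so $\mathbb{N}$ is open in $\mathbb{N}_{+}^{0}$, and by Theorem~3.3.8 of \cite{Engelking-1989} it is locally compact. As a countable locally compact Hausdorff---hence Baire---space, $\mathbb{N}$ must contain some open singleton $\{n_0\}$, since otherwise the decomposition $\mathbb{N}=\bigcup_{n}\{n\}$ into nowhere dense singletons would exhibit $\mathbb{N}$ as meager in itself. For each $k\in\{1,\ldots,n_0-1\}$ the continuous right shift $\rho_{n_0-k}\colon x\mapsto x+(n_0-k)$ sends the absorbing zero to $0\neq n_0$ and sends $k$ to $n_0$, so $\rho_{n_0-k}^{-1}(\{n_0\})=\{k\}$ is open; together with $\{n_0\}$ this makes $\{1,\ldots,n_0\}$ a clopen set of isolated points. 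Applying Baire again to the closed tail $\{n_0+1,n_0+2,\ldots\}$---which is locally compact, countable and Hausdorff---and using the clopen-ness of $\{1,\ldots,n_0\}$ to lift isolation in the tail back to isolation in $\mathbb{N}$, I obtain a new isolated point $n_1>n_0$. Iterating produces a strictly increasing sequence $n_0<n_1<n_2<\cdots$; since $n_k\to\infty$, every $n\in\mathbb{N}$ eventually lies in some $\{1,\ldots,n_k\}$, so $\mathbb{N}$ is discrete.

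For (B), if $0$ is isolated the conclusion is immediate. So suppose $0$ is not isolated, pick an open neighbourhood $W\ni 0$ with $\overline{W}$ compact, and set $V_0=W\cap\mathbb{N}$. Covering $\overline{W}$ by $\{U\}\cup\{\{n\}:n\in\mathbb{N}\}$ for an arbitrary open neighbourhood $U$ of $0$ and extracting a finite subcover shows that $V_0\setminus U$ is finite; in particular $V_0$ itself is infinite, for otherwise $V_0$ would be clopen and $\{0\}=W\setminus V_0$ would be an open neighbourhood of $0$. Separate continuity now enters the argument: $\lambda_k^{-1}(W)=\{0\}\cup\{x\in\mathbb{N}:k+x\in V_0\}$ is an open neighbourhood of $0$ for every $k\geqslant 1$, so the finiteness just established forces
\[
E_k:=\bigl\{v\in V_0:v+k\notin V_0\bigr\}=V_0\setminus\lambda_k^{-1}(W)
\]
to be finite. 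Taking $k=1$ and setting $N=\max E_1$, every $v\in V_0$ with $v>N$ satisfies $v+1\in V_0$; a straightforward induction on $j$ (with $v+j>N$ at each step) then yields $v+j\in V_0$ for all $j\geqslant 0$, so $V_0\supseteq[v,\infty)$ is cofinite in $\mathbb{N}$. Hence $\mathbb{N}_{+}^{0}\setminus W$ is finite and $\mathbb{N}_{+}^{0}=\overline{W}\cup(\mathbb{N}_{+}^{0}\setminus W)$ is compact.

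The two genuine obstacles are precisely the Baire-plus-shifts bootstrapping in (A) and the leap from ``$V_0$ infinite'' to ``$V_0$ cofinite'' in (B); everything else is routine manipulation of separate continuity together with one-point-compactification-style neighbourhood bases.
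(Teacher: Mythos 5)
Your proof is correct and follows essentially the same route as the paper: Baire category plus pull-backs along translations (whose preimages of isolated singletons are singletons because the adjoined zero is absorbing) yield discreteness of $\mathbb{N}$, and the dichotomy rests on a compact closure of a neighbourhood of $0$ together with separate continuity of translation by $1$. The only difference is organizational: where the paper assumes non-compactness and derives a contradiction from an open cover of a compact open neighbourhood of zero admitting no finite subcover, you argue directly that the set $E_1$ of points leaving $V_0$ under the shift by $1$ is finite and conclude that the neighbourhood is cofinite --- the contrapositive of the same argument.
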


\begin{proof}
Fix an arbitrary $n\in\mathbb{N}$. Since $\mathbb{N}_{+}^{0}$ is locally compact Theorem 3.3.8 of \cite{Engelking-1989} implies $\mathbb{N}$ is locally compact too. Now Hausdorffness of the space $\mathbb{N}$ implies that the set $[n+1)=\mathbb{N}\setminus\{1,\ldots,n\}$ is open in $\mathbb{N}_{+}^{0}$, and hence by Theorem 3.3.8 from \cite{Engelking-1989} it is locally compact. By the Baire Category Theorem the space $[n+1)$ contains an isolated point $m$. It is obvious that $m$ is isolated in $\mathbb{N}$. Since $n<m$ the separate continuity of the semigroup operation in $\mathbb{N}_{+}^{0}$ implies that $n$ is an isolated point of $\mathbb{N}$, and hence by Hausdorffness of $\mathbb{N}_{+}^{0}$ the point $n$ is isolated in $\mathbb{N}_{+}^{0}$.

Suppose that the space $\mathbb{N}_{+}^{0}$ is not compact and its point $0$ is not isolated. Fix an arbitrary open neighbourhood $U(0)$ of $0$ in $\mathbb{N}_{+}^{0}$ with the compact closure $\operatorname{cl}_{\mathbb{N}_{+}^{0}}(U(0))$. Since $\mathbb{N}$ is a discrete subspace of $\mathbb{N}_{+}^{0}$,
$\operatorname{cl}_{\mathbb{N}_{+}^{0}}(U(0))=U(0)$. Since the space $\mathbb{N}_{+}^{0}$ is not compact and its subspace $\mathbb{N}$ is discrete,  $U(0)=\mathbb{N}_{+}^{0}\setminus\{n_i\colon i\in\mathbb{N}\}$, where $\left\{n_i\colon i\in\mathbb{N}\right\}$ is an infinite increasing sequence of positive integers. The separate continuity of the semigroup operation of $\mathbb{N}_{+}^{0}$ implies that there exists an open neighbourhood $V(0)\subseteq U(0)$ of the zero $0$ in $\mathbb{N}_{+}^{0}$ such that $1+V(0)\subseteq U(0)$.

Now, by the existence of the infinite sequence $\left\{n_i\colon i\in\mathbb{N}\right\}$ and since the set $U(0)\setminus V(0)$ is infinite, we have that the family $\mathscr{V}=\left\{V(0),\left\{\{i\}\colon i\in\mathbb{N}\right\}\right\}$ is an open cover of $U(0)$ which does not contain a finite subcover. This contradicts the compactness of $U(0)$. The obtained contradiction implies the second statement of the lemma.
\end{proof}

\begin{lemma}\label{lema-4.3}
Let $S$ be a semitopological semigroup and $M$ be a dense subsemigroup of $S$. If $1_M$ and $0_M$ are identity and zero in $M$, respectively, then $1_M$ and $0_M$ so are in $S$.
\end{lemma}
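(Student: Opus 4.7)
The plan is to use the standard principle that two continuous maps from a topological space into a Hausdorff space that agree on a dense subset must agree everywhere, applied to the separately continuous translations in $S$.

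First I would prove that $1_M$ is an identity in $S$. Fix $s \in S$ and consider the right shift $\rho_{1_M}\colon S \to S$, $x \mapsto x \cdot 1_M$, which is continuous by the separate continuity of the semigroup operation in $S$. For every $m \in M$ we have $\rho_{1_M}(m) = m \cdot 1_M = m$, since $1_M$ is the identity of $M$. Thus the continuous maps $\rho_{1_M}$ and $\operatorname{id}_S$ coincide on the dense subset $M$ of $S$, and since $S$ is Hausdorff they coincide on the whole of $S$, i.e., $s \cdot 1_M = s$ for every $s \in S$. The same argument applied to the left shift $\lambda_{1_M}\colon S \to S$, $x \mapsto 1_M \cdot x$, gives $1_M \cdot s = s$ for every $s \in S$.

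Next I would prove that $0_M$ is a zero in $S$ by the same strategy. The right shift $\rho_{0_M}\colon S \to S$ is continuous, and for every $m \in M$ we have $\rho_{0_M}(m) = m \cdot 0_M = 0_M$, so $\rho_{0_M}$ agrees on the dense subset $M$ with the constant map $c_{0_M}\colon S \to S$, $x \mapsto 0_M$. Hausdorffness of $S$ then forces $\rho_{0_M} = c_{0_M}$, hence $s \cdot 0_M = 0_M$ for every $s \in S$. Symmetrically, $\lambda_{0_M} = c_{0_M}$, so $0_M \cdot s = 0_M$ for every $s \in S$.

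There is no real obstacle here; the only point that must be handled with care is that the uniqueness-of-extension principle genuinely requires $S$ to be Hausdorff, which is part of the hypothesis (and built into the definition of semitopological semigroup used in this paper), together with the separate continuity of multiplication which guarantees that each one-sided translation is a continuous self-map of $S$.
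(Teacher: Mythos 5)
Your proof is correct and is essentially the paper's argument in packaged form: the paper proves the same facts by a direct contradiction (disjoint neighbourhoods of $x$ and $x\cdot 1_M$, continuity of the translation, and a point of $M$ in the small neighbourhood), which is exactly an unwinding of the principle you invoke, that two continuous maps into a Hausdorff space agreeing on a dense set agree everywhere. Nothing is missing; the appeal to separate continuity for the continuity of each translation and to Hausdorffness for the uniqueness of the extension is precisely what the paper uses.
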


\begin{proof}
Let $\operatorname{id}$ be the identity map on $S$ and $\bar{O}$ be a constant map $S\to \{0_M\}$.
Clearly, these maps are continuous. Since the shifts (on $S$) by elements $1_M$ and $0_M$ are continuous too, and $1_Mx=x1_M=x=\operatorname{id}(x)$ and $0_Mx=x0_M=0_M=\bar{0}(x)$ for each element $x$ of a dense subset $M$ of a Hausdorff space $S$, by \cite[Theorem~1.5.4]{Engelking-1989} these equalities hold for each $x\in S$.
\end{proof}

Since the subsemigroup $\mathbb{N}_{-}^{0}=\left\{0\right\}\cup\left\{-n \colon n\in\mathbb{N}\right\}$ of $\mathbb{Z}_{+}^{0}$ is  isomorphic to $\mathbb{N}_{+}^{0}$, Lemmas~\ref{lema-4.2} and \ref{lema-4.3} and Remark~\ref{remark-4.1} imply the following proposition.

\begin{proposition}\label{proposition-4.4}
Let $\mathbb{Z}_{+}^{0}$ be a locally compact semitopological semigroup. Then all non-zero points of $\mathbb{Z}_{+}^{0}$ are isolated and exactly one of the following conditions holds:
\begin{itemize}
  \item[$(i)$] zero $0$ is isolated in $\mathbb{Z}_{+}^{0}$;
  \item[$(ii)$] the family $\mathscr{B}_{\mathbf{cf}}=\left\{U_F=\mathbb{Z}_{+}^{0}\setminus F\colon F \hbox{~is a finite subset of~} \mathbb{Z}\right\}$ is a base at zero;
  \item[$(iii)$] the family $\mathscr{B}^+=\left\{U_F^+=\mathbb{N}_{+}^{0}\setminus F\colon F \hbox{~is a finite subset of~} \mathbb{N}\right\}$ is a base at zero;
  \item[$(iv)$] the family $\mathscr{B}^-=\left\{U_F^-=\mathbb{Z}_{+}^{0}\setminus \left(\mathbb{N}\cup F\right)\colon F \hbox{~is a finite subset of~} \mathbb{Z}\right\}$ is a base at zero.
\end{itemize}
\end{proposition}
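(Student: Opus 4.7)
My plan is to extract isolation of the non-zero points from Remark~\ref{remark-4.1} and then apply Lemma~\ref{lema-4.2} to the two ``halves'' of $\mathbb{Z}_{+}^{0}$ that are isomorphic to $\mathbb{N}_{+}^{0}$, organising the outcome as a short case analysis at the adjoined zero.

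First, by Remark~\ref{remark-4.1} the subspace $\mathbb{Z}_{+}$ is discrete in $\mathbb{Z}_{+}^{0}$; combined with Hausdorffness this upgrades to the statement that every non-zero element of $\mathbb{Z}_{+}^{0}$ is isolated in $\mathbb{Z}_{+}^{0}$ itself, and in particular the additive identity $0_{\mathbb{Z}_{+}}$ is isolated. If $0$ is also isolated, case~$(i)$ holds, so I assume from now on that $0$ is not isolated. Then I introduce the two subsets
\begin{equation*}
A^{+}=\{0\}\cup\mathbb{N} \qquad\text{and}\qquad A^{-}=\{0\}\cup\{n\in\mathbb{Z}_{+}\colon -n\in\mathbb{N}\}
\end{equation*}
of $\mathbb{Z}_{+}^{0}$. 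Each is a subsemigroup (closed under the operation, with the adjoined $0$ absorbing) isomorphic to the abstract $\mathbb{N}_{+}^{0}$. Since the complement of $A^{+}$ in $\mathbb{Z}_{+}^{0}$ consists only of isolated points, $A^{+}$ is closed in $\mathbb{Z}_{+}^{0}$; likewise $A^{-}$. A closed subspace of a locally compact Hausdorff space is locally compact, and subsemigroups inherit separate continuity, so Lemma~\ref{lema-4.2} applies and each of $A^{+}$, $A^{-}$ is either compact or discrete in its subspace topology.

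The key observation is that $A^{\pm}$ is compact if and only if $0$ is non-isolated in $A^{\pm}$: a discrete infinite space is not compact, and $0$ being non-isolated in $A^{\pm}$ prevents $A^{\pm}$ from being discrete. Moreover when $A^{\pm}$ is compact, every open neighbourhood of $0$ in $A^{\pm}$ has a compact discrete (hence finite) complement in $A^{\pm}$, because the non-zero points of $A^{\pm}$ are isolated in it. Since $0$ is non-isolated in $\mathbb{Z}_{+}^{0}$ and $0_{\mathbb{Z}_{+}}$ is isolated, $0$ is a limit point of $\mathbb{N}\cup(-\mathbb{N})$ and hence of at least one of $\mathbb{N}$, $-\mathbb{N}$; equivalently, $0$ is non-isolated in at least one of $A^{\pm}$.

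The case analysis then writes itself. If $0$ is non-isolated in both $A^{+}$ and $A^{-}$, then both are compact and the two cofinite bases combine to give a cofinite base in $\mathbb{Z}_{+}^{0}$ (absorbing $\{0_{\mathbb{Z}_{+}}\}$ into the finite exceptional set if needed), yielding case~$(ii)$. If $0$ is non-isolated only in $A^{+}$, then $\{0\}$ is open in the discrete $A^{-}$, which produces an open neighbourhood of $0$ in $\mathbb{Z}_{+}^{0}$ contained in $A^{+}$ after discarding the isolated point $0_{\mathbb{Z}_{+}}$; hence $A^{+}$ is open in $\mathbb{Z}_{+}^{0}$ and the compactness of $A^{+}$ delivers the base $\mathscr{B}^{+}$ of case~$(iii)$. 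The case where $0$ is non-isolated only in $A^{-}$ is symmetric and produces $(iv)$. No step is deep; the only bookkeeping subtlety is keeping the isolated additive identity $0_{\mathbb{Z}_{+}}$ straight from the adjoined zero $0$ when translating between neighbourhoods in $\mathbb{Z}_{+}^{0}$ and neighbourhoods in the subsemigroups $A^{\pm}$.
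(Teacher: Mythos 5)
Your argument is correct and follows essentially the same route as the paper, which derives Proposition~\ref{proposition-4.4} by applying the dichotomy of Lemma~\ref{lema-4.2} to the two subsemigroups of $\mathbb{Z}_{+}^{0}$ isomorphic to $\mathbb{N}_{+}^{0}$ together with Remark~\ref{remark-4.1}; you merely spell out the case analysis at the adjoined zero that the paper leaves implicit (and you do not need the cited Lemma~\ref{lema-4.3}).
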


Next we define four topologies $\tau_{\mathfrak{d}}$,  $\tau_{\mathbf{cf}}$,  $\tau_{+}$ and $\tau_{-}$ on the Reilly semigroup $\textbf{B}(\mathbb{Z}_{+},\theta)$, where $\theta\colon \mathbb{Z}_{+}\to \mathbb{Z}_{+}\colon z\mapsto 0_{\mathbb{Z}_{+}}$ is the annihilating homomorphism.

\begin{example}\label{example-4.5}
Let $\tau_{\mathfrak{d}}$ be the discrete topology on $\textbf{B}(\mathbb{Z}_{+},\theta)$. It is obvious that $\left(\textbf{B}(\mathbb{Z}_{+},\theta),\tau_{\mathfrak{d}}\right)$ is a topological inverse semigroup.
\end{example}

\begin{example}\label{example-4.6}
We define a topology $\tau_{\mathbf{cf}}$ on $\textbf{B}(\mathbb{Z}_{+},\theta)$ in the following way. Let $(i,g,j)$ be an isolated point of $\left(\textbf{B}(\mathbb{Z}_{+},\theta),\tau_{\mathbf{cf}}\right)$ in the following cases:
\begin{itemize}
  \item[$(1)$] $g\neq 0_{\mathbb{Z}_{+}}$ and $i,j\in \mathbb{N}_0$;
  \item[$(2)$] $i=0$ or $j=0$.
\end{itemize}
The family
\begin{equation*}
  \mathscr{B}_{\mathbf{cf}}(i,0_{\mathbb{Z}_{+}},j)=\left\{\left(U_F\right)_{i-1,j-1}^0=\left(\mathbb{Z}_{+}\setminus F\right)_{i-1,j-1}^0\colon F \hbox{~is a finite subset of~} \mathbb{Z}\right\}
\end{equation*}
defines the base of the topology $\tau_{\mathbf{cf}}$ on $\textbf{B}(\mathbb{Z}_{+},\theta)$ at the point $(i,0_{\mathbb{Z}_{+}},j)$, for all $i,j\in\mathbb{N}$. Simple verifications show that $\tau_{\mathbf{cf}}$ is a Hausdorff locally compact topology on $\textbf{B}(\mathbb{Z}_{+},\theta)$.

Next we shall prove that $\left(\textbf{B}(\mathbb{Z}_{+},\theta),\tau_{\mathbf{cf}}\right)$ is a semitopological inverse semigroup with continuous inversion. The definition of the topology $\tau_{\mathbf{cf}}$ on $\textbf{B}(\mathbb{Z}_{+},\theta)$ implies that it is sufficient to show that the semigroup operation on $\left(\textbf{B}(\mathbb{Z}_{+},\theta),\tau_{\mathbf{cf}}\right)$ is separately continuous in the following two cases:
\begin{equation*}
  \left(U_F\right)_{i,j}^0\cdot (m,g,n) \qquad \hbox{and} \qquad (m,g,n)\cdot\left(U_F\right)_{i,j}^0,
\end{equation*}
where $i,j\in \mathbb{N}_0$, $m,n\in \mathbb{N}$, $F$ is an arbitrary finite subset of $\mathbb{Z}$ and $g$ is an arbitrary element of the group~$\mathbb{Z}_{+}$, because in cases $(1)$ and $(2)$ the point $(i,g,j)$ is isolated in $\left(\textbf{B}(\mathbb{Z}_{+},\theta),\tau_{\mathbf{cf}}\right)$.

Simple calculations show that
\begin{equation*}
  \left(U_F\right)_{i,j}^0\cdot (m,g,n)=
  \left\{
    \begin{array}{cl}
      \left\{(m+i-j,g,n)\right\},   & \hbox{if~} j<m;\\
      \left(U_{F+g}\right)_{i,n}^0, & \hbox{if~} j=m;\\
      \left(U_F\right)_{i,j+n-m}^0, & \hbox{if~} j> m,
    \end{array}
  \right.
\end{equation*}

\begin{equation*}
  (i,g,j)\cdot\left(U_F\right)_{m,n}^0=
  \left\{
    \begin{array}{cl}
      \left(U_F\right)_{i+m-j,n}^0, & \hbox{if~} j<m;\\
      \left(U_{g+F}\right)_{i,n}^0, & \hbox{if~} j=m;\\
      \left\{(i,g,j+n-m)\right\},   & \hbox{if~} j>m,
    \end{array}
  \right.
\end{equation*}
and
\begin{equation*}
  \operatorname{\textsf{inv}}\left(\left(U_F\right)_{i,j}^0\right)=\left(U_{-F}\right)_{j,i}^0,
\end{equation*}
where $g+F=F+g=\left\{f+g\colon f\in F\right\}$ and $-F=\left\{-f\colon f\in F\right\}$.
\end{example}

\begin{example}\label{example-4.7}
We define a topology $\tau_{+}$ on $\textbf{B}(\mathbb{Z}_{+},\theta)$ in the following way. Let $(i,g,j)$ be an isolated point of $\left(\textbf{B}(\mathbb{Z}_{+},\theta),\tau_{+}\right)$ in the following cases:
\begin{itemize}
  \item[$(1)$] $g\neq 0_{\mathbb{Z}_{+}}$ and $i,j\in \mathbb{N}_0$;
  \item[$(2)$] $i=0$ or $j=0$.
\end{itemize}
The family
\begin{equation}\label{eq-4.1}
  \mathscr{B}_{+}(i,0_{\mathbb{Z}_{+}},j)=\left\{\left(U_F^+\right)_{i-1,j-1}^0=\left(\mathbb{N}\setminus F\right)_{i-1,j-1}^0\colon F \hbox{~~is a finite subset of~~} \mathbb{N}\right\}
\end{equation}
defines the base of the topology $\tau_{+}$ on $\textbf{B}(\mathbb{Z}_{+},\theta)$ at the point $(i,0_{\mathbb{Z}_{+}},j)$, for all $i,j\in\mathbb{N}$. Simple verifications show that $\tau_{+}$ is a Hausdorff locally compact topology on $\textbf{B}(\mathbb{Z}_{+},\theta)$.

Next we shall prove that $\left(\textbf{B}(\mathbb{Z}_{+},\theta),\tau_{+}\right)$ is a topological semigroup. Now, the definition of the topology $\tau_{+}$ on $\textbf{B}(\mathbb{Z}_{+},\theta)$ implies that it is sufficient to show that the semigroup operation on $\left(\textbf{B}(\mathbb{Z}_{+},\theta),\tau_{+}\right)$ is continuous in the following three cases:
\begin{equation*}
  \left(U_F^+\right)_{i,j}^0\cdot (m,g,n), \qquad (m,g,n)\cdot\left(U_F^+\right)_{i,j}^0, \qquad \hbox{and} \qquad \left(U_F^+\right)_{k,l}^0\cdot\left(U_{F^{\prime}}^+\right)_{i,j}^0,
\end{equation*}
where $i,j,k.l\in \mathbb{N}_0$, $m,n\in \mathbb{N}$, $F$ and $F^{\prime}$ are arbitrary finite subsets of $\mathbb{Z}$ and $g$ is an arbitrary element of the group $\mathbb{Z}_{+}$, because in cases $(1)$ and $(2)$ the point $(i,g,j)$ is isolated in $\left(\textbf{B}(\mathbb{Z}_{+},\theta),\tau_{+}\right)$.

Without loss of generality we may assume that in formula (\ref{eq-4.1}) the set $F$ is an initial interval of the set of positive integers, because for every finite subset $F$ of $\mathbb{N}$ there exists an initial interval $\{1,\ldots,m\}$ of $\mathbb{N}$ such that $F\subseteq\{1,\ldots,m\}$.

Simple calculations show that
\begin{equation*}
  \left(U_{{F-g}}^+\right)_{i,j}^0\cdot (m,g,n)=
  \left\{
    \begin{array}{cl}
      \left\{(m+i-j,g,n)\right\},                    & \hbox{if~} j<m;\\
      \left(U_{F}^+\right)_{i,n}^0,                  & \hbox{if~} j=m;\\
      \left(U_{{F-g}}^+\right)_{i,j+n-m}^0, & \hbox{if~} j>m,
    \end{array}
  \right.
\end{equation*}

\begin{equation*}
  (i,g,j)\cdot\left(U_{{F-g}}^+\right)_{m,n}^0=
  \left\{
    \begin{array}{cl}
      \left(U_{{F-g}}^+\right)_{i+m-j,n}^0, & \hbox{if~} j<m;\\
      \left(U_{F}^+\right)_{i,n}^0,                  & \hbox{if~} j=m;\\
      \left\{(i,g,j+n-m)\right\},                    & \hbox{if~} j>m,
    \end{array}
  \right.
\end{equation*}
and
\begin{equation*}
  \left(U_F^+\right)_{i,j}^0\cdot\left(U_{F}^+\right)_{m,n}^0=
    \left\{
    \begin{array}{cl}
      \left(U_{F}^+\right)_{m,n}^0,  & \hbox{if~} j<m;\\
      \left(U_{F+F}^+\right)_{i,n}^0, & \hbox{if~} j=m;\\
      \left(U_F^+\right)_{i,j}^0,    & \hbox{if~} j>m,
    \end{array}
  \right.
\end{equation*}
where ${F-g}=\left\{f-g\colon f\in F\right\}\cap\mathbb{N}$  and ${F+F}=\left\{f+g\colon f,g\in F\right\}$.
\end{example}

\begin{example}\label{example-4.8}
We define a topology $\tau_{-}$ on $\textbf{B}(\mathbb{Z}_{+},\theta)$ in the following way. Let $(i,g,j)$ be an isolated point of $\left(\textbf{B}(\mathbb{Z}_{+},\theta),\tau_{-}\right)$ in the following cases:
\begin{itemize}
  \item[$(1)$] $g\neq 0_{\mathbb{Z}_{+}}$ and $i,j\in \mathbb{N}_0$;
  \item[$(2)$] $i=0$ or $j=0$.
\end{itemize}
The family
\begin{equation*}
  \mathscr{B}_{-}(i,0_{\mathbb{Z}_{+}},j)=\left\{\left(U_F^-\right)_{i,j}^0=\left(\mathbb{Z}\setminus \left(\mathbb{N}_0\cup -F\right)\right)_{i,j}^0\colon F \hbox{~~is a finite subset of~~} \mathbb{N}\right\},
\end{equation*}
where $-F=\{-m\colon m\in F\}$, defines the base of the topology $\tau_{-}$ on $\textbf{B}(\mathbb{Z}_{+},\theta)$ at the point $(i,0_{\mathbb{Z}_{+}},j)$, for all $i,j\in\mathbb{N}$. Simple verifications show that $\tau_{-}$ is a Hausdorff locally compact topology on $\textbf{B}(\mathbb{Z}_{+},\theta)$.

We observe that the map $\Phi\colon\left(\textbf{B}(\mathbb{Z}_{+},\theta),\tau_{+}\right)\to \left(\textbf{B}(\mathbb{Z}_{+},\theta),\tau_{-}\right)$ defined by the formula $\Phi(i,g,j)=(i,-g,j)$ is an isomorphism of the semigroup $\textbf{B}(\mathbb{Z}_{+},\theta)$ which is a homeomorphism of topological spaces $\left(\textbf{B}(\mathbb{Z}_{+},\theta),\tau_{+}\right)$ and $\left(\textbf{B}(\mathbb{Z}_{+},\theta),\tau_{-}\right)$, and hence $\left(\textbf{B}(\mathbb{Z}_{+},\theta),\tau_{-}\right)$ is a topological semigroup which is topologically isomorphic to the topological semigroup $\left(\textbf{B}(\mathbb{Z}_{+},\theta),\tau_{+}\right)$.
\end{example}

The following theorem describes all shift-continuous locally compact topologies on the Reilly semigroup $\textbf{B}(\mathbb{Z}_{+},\theta)$ in the case when $\theta\colon \mathbb{Z}_{+}\to \mathbb{Z}_{+}$, $z\mapsto 0_{\mathbb{Z}_{+}}$ is the annihilating homomorphism.

\begin{theorem}\label{theorem-4.9}
Let $\theta\colon \mathbb{Z}_{+}\to \mathbb{Z}_{+}$, $z\mapsto 0_{\mathbb{Z}_{+}}$ be the annihilating homomorphism and $\tau$ be a shift-continuous  locally compact topology on \emph{$\textbf{B}(\mathbb{Z}_{+},\theta)$}. Then only one of the following conditions holds:
\begin{itemize}
  \item[$(i)$] $\tau$ is discrete;
  \item[$(ii)$] $\tau=\tau_{\mathbf{cf}}$;
  \item[$(iii)$] $\tau=\tau_{+}$;
  \item[$(iv)$] $\tau=\tau_{-}$.
\end{itemize}
\end{theorem}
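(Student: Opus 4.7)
The plan is to reduce the classification to the local structure at the single idempotent $(1, 0_{\mathbb{Z}_{+}}, 1)$ and then invoke Proposition~\ref{proposition-4.4}. The strategy has three stages: first, identify all points that are forced to be isolated under any such $\tau$; second, exhibit a homeomorphism between the neighbourhood $S^0_{i-1,j-1}=G_{i-1,j-1}\cup\{(i,0_{\mathbb{Z}_{+}},j)\}$ of a potentially non-isolated point $(i,0_{\mathbb{Z}_{+}},j)$ (with $i,j\geqslant 1$) and the subsemigroup $S^0_{0,0}=G_{0,0}\cup\{(1,0_{\mathbb{Z}_{+}},1)\}\cong\mathbb{Z}^0_{+}$ guaranteed by Proposition~\ref{proposition-2.4}(v); third, classify the topology at the latter point using Proposition~\ref{proposition-4.4} and pull back to obtain the four global topologies.

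For the first stage, the $\mathscr{H}$-classes $G_{0,j}$, $G_{i,0}$ and $G_{0,0}$ are open by Proposition~\ref{proposition-2.4}(vi)-(vii); the maximal subgroup $G_{0,0}\cong\mathbb{Z}_{+}$ is then a locally compact countable Hausdorff semitopological group, hence a topological group by Ellis' theorem and discrete by the Baire category theorem. Proposition~\ref{proposition-2.4}(i) transfers this discreteness to every $G_{i,j}$ as a subspace, so all points of the open $G_{0,j}$ and $G_{i,0}$ are isolated in $\textbf{B}(\mathbb{Z}_{+},\theta)$. For a remaining $(i,g,j)$ with $i,j\geqslant 1$ and $g\neq 0_{\mathbb{Z}_{+}}$, I would use separate continuity of the right translation $\rho_{(j,0_{\mathbb{Z}_{+}},0)}$: it sends $(i,g,j)$ to the isolated point $(i,g,0)$, and a direct calculation using the fact that $\theta^{k}(g)=0_{\mathbb{Z}_{+}}$ for every $k\geqslant 1$ shows that the preimage of $\{(i,g,0)\}$ collapses to the singleton $\{(i,g,j)\}$, which is therefore open.

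For the second stage, the compositions $\rho_{(j-1,0_{\mathbb{Z}_{+}},0)}\circ\lambda_{(0,0_{\mathbb{Z}_{+}},i-1)}$ and $\rho_{(0,0_{\mathbb{Z}_{+}},j-1)}\circ\lambda_{(i-1,0_{\mathbb{Z}_{+}},0)}$ restrict to mutually inverse continuous bijections between $S^0_{i-1,j-1}$ and $S^0_{0,0}$ that are the identity on the $G$-parameter; the same index computations identify the preimage of $S^0_{0,0}$ with $S^0_{i-1,j-1}$, so the latter is open. This reduces the classification problem to describing the locally compact Hausdorff semitopological topologies on $\mathbb{Z}^{0}_{+}$, which is exactly the content of Proposition~\ref{proposition-4.4}. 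The four alternatives there (isolated zero, cofinite in $\mathbb{Z}$, positive cofinite, negative cofinite) then pull back through the homeomorphisms of the second stage to produce, respectively, the four topologies $\tau_{\mathfrak{d}}$, $\tau_{\mathbf{cf}}$, $\tau_{+}$, $\tau_{-}$ globally on $\textbf{B}(\mathbb{Z}_{+},\theta)$.

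The main obstacle will be the bookkeeping in the second stage: verifying that the restricted translations really are homeomorphisms between $S^0_{i-1,j-1}$ and $S^0_{0,0}$ (and not merely continuous bijections) and, critically, that the pullback preserves the orientation of $\mathbb{Z}_{+}$ so that the $\tau_{+}$ option at $(1,0_{\mathbb{Z}_{+}},1)$ propagates to the $\tau_{+}$ option at every $(i,0_{\mathbb{Z}_{+}},j)$ rather than twisting into $\tau_{-}$. This orientation-preservation amounts to checking that the chosen composition of translations sends $(i-1,g,j-1)\mapsto(0,g,0)$ with the group parameter $g\in\mathbb{Z}_{+}$ unchanged, which follows from an explicit index calculation once the annihilating property of $\theta$ is applied; the rest is routine separate-continuity bookkeeping.
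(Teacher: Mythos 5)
Your overall architecture is the same as the paper's (isolate every point except the $(i,0_{\mathbb{Z}_{+}},j)$ with $i,j\geqslant 1$, transport the local structure at such a point to the subsemigroup $S^0_{0,0}\cong\mathbb{Z}_{+}^{0}$ by translations, and invoke Proposition~\ref{proposition-4.4}), and your first stage is correct; the genuine gaps are in the second stage. First, you never establish the base case: to apply Proposition~\ref{proposition-4.4} to $S^0_{0,0}$ you need its subspace topology to be locally compact, and, more importantly, to say anything about $\tau$ at $(1,0_{\mathbb{Z}_{+}},1)$ you need that this point has whole-space neighbourhoods contained in $S^0_{0,0}$; a priori every neighbourhood of $(1,0_{\mathbb{Z}_{+}},1)$ could meet infinitely many other $\mathscr{H}$-classes, and then the trace of $\tau$ on $S^0_{0,0}$ determines nothing. (The paper secures local compactness by observing that $(\mathbb{Z}_{+})_{0,0}^0$ is the full preimage of a point under $\rho_{(1,0_{\mathbb{Z}_{+}},1)}$, hence closed, and uses Proposition~\ref{proposition-2.4}$(viii)$ for openness considerations; with your stage one in hand you could instead note that $S^0_{0,0}=\lambda_{(0,0_{\mathbb{Z}_{+}},1)}^{-1}\left(\{(0,0_{\mathbb{Z}_{+}},1)\}\right)$ and that $(0,0_{\mathbb{Z}_{+}},1)$ is isolated, so $S^0_{0,0}$ is open --- but some such argument must appear.) Second, your preimage identification is simply false: since $\theta$ is annihilating, the map $\psi\colon x\mapsto(0,0_{\mathbb{Z}_{+}},i-1)\cdot x\cdot(j-1,0_{\mathbb{Z}_{+}},0)$ sends every class $G_{m,m+j-i}$ with $0\leqslant m<i-1$ onto the single point $(0,0_{\mathbb{Z}_{+}},0)\in S^0_{0,0}$ (for instance, for $i=j=2$ one has $\psi(0,g,0)=(0,0_{\mathbb{Z}_{+}},0)$ for every $g$), so $\psi^{-1}\left(S^0_{0,0}\right)$ properly contains $S^0_{i-1,j-1}$ and openness of the latter does not follow. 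In fact it cannot: in the non-discrete alternatives the point $(i-1,0_{\mathbb{Z}_{+}},j-1)$ (present in $S^0_{i-1,j-1}$ when $i,j\geqslant 2$) is itself a limit of points of $G_{i-2,j-2}$, so $S^0_{i-1,j-1}$ is not a neighbourhood of all of its points.

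What you actually need is weaker and true: that $(i,0_{\mathbb{Z}_{+}},j)$ has a neighbourhood base inside $S^0_{i-1,j-1}$, of the same type of Proposition~\ref{proposition-4.4} as occurs at $(1,0_{\mathbb{Z}_{+}},1)$. Once the base case is settled, this can be obtained along your lines: pull back under $\psi$ a basic neighbourhood $V$ of $(1,0_{\mathbb{Z}_{+}},1)$ chosen inside $S^0_{0,0}$ with $(0,0_{\mathbb{Z}_{+}},0)\notin V$ (possible in each of the four cases); then $\psi^{-1}(V)=\{(i,0_{\mathbb{Z}_{+}},j)\}\cup\{(i-1,g,j-1)\colon(0,g,0)\in V\}$ is open, while continuity of $x\mapsto(i-1,0_{\mathbb{Z}_{+}},0)\cdot x\cdot(0,0_{\mathbb{Z}_{+}},j-1)$ at $(1,0_{\mathbb{Z}_{+}},1)$ gives the matching lower bound on neighbourhoods, and since the group coordinate is preserved the same alternative occurs at every such point, yielding the four global topologies. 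As written, however, your second stage asserts two false statements (openness of $S^0_{i-1,j-1}$ and the preimage identity) and omits the base case, and these are exactly the crux of the theorem --- the place where the paper appeals to the closedness of $(\mathbb{Z}_{+})_{0,0}^0$ together with Proposition~\ref{proposition-2.4}$(iv)$ and $(viii)$ before applying Proposition~\ref{proposition-4.4}.
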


\begin{proof}
Since
\begin{equation*}
  (i,g,j)(1,0_{\mathbb{Z}_{+}},1)=
  \left\{
    \begin{array}{cl}
      (i+1,0_{\mathbb{Z}_{+}},1), & \hbox{if~~} j=0;\\
      (i,g,1),                    & \hbox{if~~} j=1;\\
      (i,g,j),                    & \hbox{if~~} j>1,
    \end{array}
  \right.
\end{equation*}
for arbitrary $i,j\in \mathbb{N}_0$ and the right shift $\rho_{(1,0_{\mathbb{Z}_{+}},1)}\colon \left(\textbf{B}(\mathbb{Z}_{+},\theta),\tau\right)\to \left(\textbf{B}(\mathbb{Z}_{+},\theta),\tau\right)$, $x\mapsto x\cdot(1,0_{\mathbb{Z}_{+}},1)$ is continuous, the subspace  $\left(\mathbb{Z}_{+}\right)_{0,0}^0=\rho_{(1,0_{\mathbb{Z}_{+}},1)}^{-1}\left((1,0_{\mathbb{Z}_{+}},1)\right)$ of $\left(\textbf{B}(\mathbb{Z}_{+},\theta),\tau\right)$ is closed, and hence the local compactness of $\left(\textbf{B}(\mathbb{Z}_{+},\theta),\tau\right)$ and Theorem~3.3.8 from \cite{Engelking-1989} imply that $\left(\mathbb{Z}_{+}\right)_{0,0}^0$ is locally compact. It is obvious that $\left(\mathbb{Z}_{+}\right)_{0,0}^0$ with the induced semigroup operation from $\textbf{B}(\mathbb{Z}_{+},\theta)$ is isomorphic to $\mathbb{Z}_{+}^{0}$, and hence the statement of Proposition~\ref{proposition-4.4} for the locally compact subsemigroup $\left(\mathbb{Z}_{+}\right)_{0,0}^0$ holds. Proposition~\ref{proposition-2.4}$(iv)$  completes the proof of our theorem.
\end{proof}

The following corollary follows from Theorem~\ref{theorem-4.9} and it describes all semigroup Hausdorff locally compact topologies on the Reilly semigroup $\textbf{B}(\mathbb{Z}_{+},\theta)$ in the case when $\theta$ is the annihilating homomorphism.

\begin{corollary}\label{corollary-4.10}
For a locally compact topological semigroup  \emph{$\left(\textbf{B}(\mathbb{Z}_{+},\theta),\tau\right)$} with the annihilating homomorphism $\theta$ exactly one of the following conditions holds:
\begin{itemize}
  \item[$(i)$] $\tau$ is discrete;
  \item[$(ii)$] $\tau=\tau_{+}$;
  \item[$(iii)$] $\tau=\tau_{-}$.
\end{itemize}
\end{corollary}

\begin{proposition}\label{proposition-4.11}
If $\theta$ is the annihilating homomorphism and $\tau$ is a shift-continuous locally compact topology on \emph{$\textbf{B}(\mathbb{Z}_{+},\theta)^0$} such that the induced topology of $\tau$ on \emph{$\textbf{B}(\mathbb{Z}_{+},\theta)$} is discrete, then $\tau$ is discrete.
\end{proposition}

\begin{proof}
Suppose to the contrary that the zero $\textsf{0}$ is not an isolated point of $\left(\textbf{B}(\mathbb{Z}_{+},\theta)^0,\tau\right)$.

Since all non-zero elements of the semigroup $\textbf{B}(\mathbb{Z}_{+},\theta)^0$ are isolated points in $\left(\textbf{B}(\mathbb{Z}_{+},\theta)^0,\tau\right)$, the binary relation $\eta^\natural$ on $\textbf{B}(\mathbb{Z}_{+},\theta)^0$ defined by formula \eqref{eq-3.1} is a closed congruence on $\left(\textbf{B}(\mathbb{Z}_{+},\theta)^0,\tau\right)$.   Next we shall show that if the natural homomorphism $\eta\colon \textbf{BR}(\mathbb{Z}_{+},\theta)^0\to \mathscr{C}^0$ is a quotient map then $\eta$ is open. Since all non-zero elements in $\left(\textbf{B}(\mathbb{Z}_{+},\theta)^0,\tau\right)$ are isolated points it suffices to prove that the image $\eta(U_\textbf{0})$ is open for every open neighbourhood $U_\textbf{0}$ of zero. Put $U_\textbf{0}^*=\eta^{-1}\left(\eta(U_\textbf{0})\right)$. Clearly, $U^*_\textbf{0}=\eta^{-1}\left(\eta(U^*_\textbf{0})\right)$. Since $\eta\colon \textbf{BR}(\mathbb{Z}_{+},\theta)^0\to \mathscr{C}^0$ is a natural homomorphism,
\begin{equation*}
  U^*_\textbf{0}=\bigcup\left\{(\mathbb{Z}_{+})_{i,j}\colon (\mathbb{Z}_{+})_{i,j}\cap U_\textbf{0}\neq\varnothing\right\}\cup \{\textbf{0}\}.
\end{equation*}
The last equality and Lemma~\ref{lemma-3.1} implies that $U^*_0$ is an open subset of the space $\left(\textbf{B}(\mathbb{Z}_{+},\theta)^0,\tau\right)$. Since $\eta$ is a quotient map and $U^*_0=\eta^{-1}\left(\eta(U^*_0)\right)$, $\eta(U_0)$ is an open subset of the space $\mathscr{C}^0$. This implies that the quotient semigroup $\textbf{B}(\mathbb{Z}_{+},\theta)^0/\eta^\natural$ with the quotient topology is a Hausdorff semitopological semigroup.

By Theorem~3.3.15 from \cite{Engelking-1989}, $\textbf{B}(\mathbb{Z}_{+},\theta)^0/\eta^\natural$ with the quotient topology is a locally compact space with non-isolated zero. Then by Theorem~1 from \cite{Gutik-2015} the quotient semigroup $\textbf{B}(\mathbb{Z}_{+},\theta)^0/\eta^\natural$ with the quotient topology is topologically isomorphic to the compact semitopological semigroup $(\mathscr{C}^0,\tau_{\operatorname{\textsf{Ac}}})$ (see Example~\ref{example-3.5}). Now, since the natural homomorphism $\eta\colon \left(\textbf{B}(\mathbb{Z}_{+},\theta)^0,\tau\right)\to (\mathscr{C}^0,\tau_{\operatorname{\textsf{Ac}}})$ is an open map, for every open neighbourhood $U_\textsf{0}$ of zero in $\left(\textbf{B}(\mathbb{Z}_{+},\theta)^0,\tau\right)$, there exist finitely many subsets  $(\mathbb{Z}_{+})_{i,j}$ in $\textbf{B}(\mathbb{Z}_{+},\theta)^0$ such that $(\mathbb{Z}_{+})_{i,j}\cap U_\textsf{0}=\varnothing$.

Next we shall show that for any open neighbourhood $U_\textsf{0}$ of zero in $\left(\textbf{B}(\mathbb{Z}_{+},\theta)^0,\tau\right)$ there exist finitely many subsets $(\mathbb{Z}_{+})_{0,j}$ in $\textbf{B}(\mathbb{Z}_{+},\theta)^0$, $j\in \mathbb{N}_0$, such that $(\mathbb{Z}_{+})_{0,j}\nsubseteq U_\textsf{0}$. Suppose to the contrary that there exist an open neighbourhood $U_\textsf{0}$ of zero in $\left(\textbf{B}(\mathbb{Z}_{+},\theta)^0,\tau\right)$ and infinitely many subsets $(\mathbb{Z}_{+})_{0,j}$ in $\textbf{B}(\mathbb{Z}_{+},\theta)^0$, $j\in \mathbb{N}_0$, such that $(\mathbb{Z}_{+})_{0,j}\nsubseteq U_\textsf{0}$. Since $\textbf{B}(\mathbb{Z}_{+},\theta)$ is a discrete subspace of the locally compact space $\left(\textbf{B}(\mathbb{Z}_{+},\theta)^0,\tau\right)$ without loss of generality we may assume that the neighbourhood $U_\textsf{0}$ is compact. Then the separate continuity of the semigroup operation in $\left(\textbf{B}(\mathbb{Z}_{+},\theta)^0,\tau\right)$ implies that there exists an open neighbourhood $V_\textsf{0}\subseteq U_\textsf{0}$ such that $(0,1,0)\cdot V_\textsf{0}\subseteq U_\textsf{0}$. Also, the assumption that there exist infinitely many subsets $(\mathbb{Z}_{+})_{0,j}$ in $\textbf{B}(\mathbb{Z}_{+},\theta)^0$, $j\in \mathbb{N}_0$, such that $(\mathbb{Z}_{+})_{0,j}\nsubseteq U_\textsf{0}$ implies that the following open cover $\mathscr{V}=\left\{\left\{V_\textsf{0}\right\},\left\{(\mathbb{Z}_{+})_{i,j}\colon i,j\in \mathbb{N}_0\right\}\right\}$ of the set $U_\textsf{0}$ does not contain a finite subcover, because our assumption implies that there does not exist finitely many subsets $(\mathbb{Z}_{+})_{0,j}$ in $\textbf{B}(\mathbb{Z}_{+},\theta)^0$, $j\in \mathbb{N}_0$, which cover the set $\left\{(\mathbb{Z}_{+})_{0,j}\colon j\in \mathbb{N}_0\right\}\cap \left((0,1,0)\cdot V_\textsf{0}\right)\setminus V_\textsf{0}$. This contradicts the compactness of $U_\textsf{0}$.

Fix an arbitrary compact open neighbourhood $U_\textsf{0}$ of zero in $\left(\textbf{B}(\mathbb{Z}_{+},\theta)^0,\tau\right)$ and a set $(\mathbb{Z}_{+})_{0,j_0}\subseteq U_\textsf{0}$. Then the following formula
\begin{equation*}
(1,0_{\mathbb{Z}_{+}},1)(i,g,j)=
  \left\{
    \begin{array}{cl}
      (1,0_{\mathbb{Z}_{+}},j+1), & \hbox{if~~} i=0;\\
      (1,g,j),                    & \hbox{if~~} i=1;\\
      (i,g,j),                    & \hbox{if~~} i>1,
    \end{array}
  \right.
\end{equation*}
$i,j\in \mathbb{N}_0$, implies that the set $(\mathbb{Z}_{+})_{0,j_0}^0$ is open-and-closed in $\left(\textbf{B}(\mathbb{Z}_{+},\theta)^0,\tau\right)$ because  $\textbf{B}(\mathbb{Z}_{+},\theta)$ is a discrete subspace of $\left(\textbf{B}(\mathbb{Z}_{+},\theta)^0,\tau\right)$ and $\tau$ is shift-continuous. Then the neighbourhood $U_\textsf{0}$ contains the open-and-closed discrete subspace $(\mathbb{Z}_{+})_{0,j_0}$ which contradicts the compactness of $U_\textsf{0}$.
\end{proof}

For all non-negative integers $k$ and $l$ we put  $(\widetilde{\mathbb{Z}}_{+})_{k,l}^0=(\mathbb{Z}_{+})_{k,l}^0\setminus \left\{(k,0_{\mathbb{Z}_{+}},l)\right\}$, $\widetilde{\mathfrak{Z}}=\left\{(\widetilde{\mathbb{Z}}_{+})_{i,j}^0\colon i,j\in \mathbb{N}_0\right\}$ and  $\widetilde{\mathfrak{O}}=\left\{\left\{(i,0_{\mathbb{Z}_{+}},j)\right\}\colon i,j\in \mathbb{N}_0 \hbox{~and~} ij=0\right\}$. It is obvious that $\textbf{B}(\mathbb{Z}_{+},\theta)=\bigcup\widetilde{\mathfrak{Z}}\cup \bigcup\widetilde{\mathfrak{O}}$.


\begin{proposition}\label{proposition-4.12}
Let $\theta$ be the annihilating homomorphism and $\tau$ be a locally compact topology on \emph{$\textbf{B}(\mathbb{Z}_{+},\theta)^0$} such that \emph{$\left(\textbf{B}(\mathbb{Z}_{+},\theta)^0,\tau\right)$} is a semitopological semigroup with non-isolated zero \emph{$\textsf{0}$} and the induced topology of $\tau$ on \emph{$\textbf{B}(\mathbb{Z}_{+},\theta)$} coincides with the topology $\tau_{\mathbf{cf}}$. Then the following assertions hold:
\begin{itemize}
  \item[$(i)$] for any non-negative integers $i$ and $j$ the set $(\widetilde{\mathbb{Z}}_{+})_{i,j}^0$ is open and compact in \emph{$\left(\textbf{B}(\mathbb{Z}_{+},\theta)^0,\tau\right)$} and hence it is closed;

  \item[$(ii)$] every open neighbourhood \emph{$U_\textsf{0}$} of zero in \emph{$\left(\textbf{B}(\mathbb{Z}_{+},\theta)^0,\tau\right)$} intersects infinitely many elements of the family $\widetilde{\mathfrak{Z}}$;

  \item[$(iii)$] for every open neighbourhood \emph{$U_\textsf{0}$} of zero  with the compact closure and any open neighbourhood \emph{$V_\textsf{0}$} of zero in \emph{$\left(\textbf{B}(\mathbb{Z}_{+},\theta)^0,\tau\right)$} each of  sets \emph{$\operatorname{cl}_{\left(\textbf{B}(\mathbb{Z}_{+},\theta)^0,\tau\right)}(U_\textsf{0})\setminus V_\textsf{0}$} and \emph{$U_\textsf{0}\setminus V_\textsf{0}$} intersects finitely many elements of the family $\widetilde{\mathfrak{Z}}$;

  \item[$(iv)$] for any non-negative integer $i_0$ every open neighbourhood \emph{$U_\textsf{0}$} of zero in \emph{$\left(\textbf{B}(\mathbb{Z}_{+},\theta)^0,\tau\right)$} intersects infinitely many elements of the family $\widetilde{\mathfrak{Z}}$ of the form $(\widetilde{\mathbb{Z}}_{+})_{i_0,j}^0$;

  \item[$(v)$] for any non-negative integer $i_0$ every open neighbourhood \emph{$U_\textsf{0}$} of zero in \emph{$\left(\textbf{B}(\mathbb{Z}_{+},\theta)^0,\tau\right)$} intersects infinitely many elements of the family $\widetilde{\mathfrak{Z}}$ of the form $(\widetilde{\mathbb{Z}}_{+})_{j,i_0}^0$;

  \item[$(vi)$] for any non-negative integer $i_0$ and every open neighbourhood \emph{$U_\textsf{0}$} of zero in \emph{$\left(\textbf{B}(\mathbb{Z}_{+},\theta)^0,\tau\right)$} the  family $\mathfrak{H}_{i_0}=\left\{(\widetilde{\mathbb{Z}}_{+})_{i_0,j}^0\in\widetilde{\mathfrak{Z}}\colon (\widetilde{\mathbb{Z}}_{+})_{i_0,j}^0\cap U_\emph{\textsf{0}}=\varnothing\right\}$ is finite;

  \item[$(vii)$] for any non-negative integer $i_0$ and every open neighbourhood $U_\emph{\textsf{0}}$ of zero in \emph{$\left(\textbf{B}(\mathbb{Z}_{+},\theta)^0,\tau\right)$} the  family $\mathfrak{V}_{i_0}=\left\{(\widetilde{\mathbb{Z}}_{+})_{j,i_0}^0\in\widetilde{\mathfrak{Z}}\colon (\widetilde{\mathbb{Z}}_{+})_{j,i_0}^0\cap U_\emph{\textsf{0}}=\varnothing\right\}$ is finite;

  \item[$(viii)$] for any non-negative integer $i_0$ and every open neighbourhood $U_\emph{\textsf{0}}$ of zero in \emph{$\left(\textbf{B}(\mathbb{Z}_{+},\theta)^0,\tau\right)$} the set $\left\{(i_0,0_{\mathbb{Z}_{+}},j)\colon j\in \mathbb{N}_0\right\}\setminus U_\emph{\textsf{0}}$ is finite;

  \item[$(ix)$] for any non-negative integer $j_0$ and every  open neighbourhood $U_\emph{\textsf{0}}$ of zero in \emph{$\left(\textbf{B}(\mathbb{Z}_{+},\theta)^0,\tau\right)$} the set $\left\{(i,0_{\mathbb{Z}_{+}},j_0)\colon i\in \mathbb{N}_0\right\}\setminus U_\emph{\textsf{0}}$ is finite;

  \item[$(x)$] for any non-negative integer $i_0$ and every open neighbourhood $U_\emph{\textsf{0}}$ of zero in \emph{$\left(\textbf{B}(\mathbb{Z}_{+},\theta)^0,\tau\right)$} there exist finitely many subsets of the form $(\mathbb{Z}_{+})_{i_0,j}^0$ such that $(\mathbb{Z}_{+})_{i_0,j}^0\nsubseteq U_\emph{\textsf{0}}$;

  \item[$(xi)$] for any non-negative integer $i_0$ and every open neighbourhood $U_\emph{\textsf{0}}$ of zero in \emph{$\left(\textbf{B}(\mathbb{Z}_{+},\theta)^0,\tau\right)$} there exist finitely many subsets of the form $(\mathbb{Z}_{+})_{j,i_0}^0$ such that $(\mathbb{Z}_{+})_{i_0,j}^0\nsubseteq U_\emph{\textsf{0}}$;

  \item[$(xii)$] for every open neighbourhood $U_\emph{\textsf{0}}$ of zero in \emph{$\left(\textbf{B}(\mathbb{Z}_{+},\theta)^0,\tau\right)$} there exist finitely many subsets $(\mathbb{Z}_{+})_{i_1,j_1}^0,\ldots,(\mathbb{Z}_{+})_{i_k,j_k}^0$ such that \emph{$(\mathbb{Z}_{+})_{i_1,j_1}^0\cup\ldots\cup(\mathbb{Z}_{+})_{i_k,j_k}^0\cup U_\textsf{0}=\textbf{B}(\mathbb{Z}_{+},\theta)^0$}.
\end{itemize}
\end{proposition}

\begin{proof}
$(i)$ Since $\textbf{B}(\mathbb{Z}_{+},\theta)$ is an open subset of $\left(\textbf{B}(\mathbb{Z}_{+},\theta)^0,\tau\right)$ the set $(\widetilde{\mathbb{Z}}_{+})_{i,j}^0$ is open for any non-negative integers $i$ and $j$. Moreover, the definition of the topology $\tau_{\mathbf{cf}}$ on  $\textbf{B}(\mathbb{Z}_{+},\theta)$ implies that $(\widetilde{\mathbb{Z}}_{+})_{i,j}^0$ is compact, and since $\tau$ is Hausdorff, the set $(\widetilde{\mathbb{Z}}_{+})_{i,j}^0$ is closed.

$(ii)$ Suppose to the contrary that there exists an open neighbourhood $U_\textsf{0}$ of zero in $\left(\textbf{B}(\mathbb{Z}_{+},\theta)^0,\tau\right)$ which intersects finitely many elements of the family $\widetilde{\mathfrak{Z}}$. The definition of the topology $\tau_{\mathbf{cf}}$ on $\textbf{B}(\mathbb{Z}_{+},\theta)$ implies that the set $(\widetilde{\mathbb{Z}}_{+})_{i,j}^0$ is compact for all non-negative integers $i$ and $j$, and hence our assumption implies that there exists finitely many $(\widetilde{\mathbb{Z}}_{+})_{i_1,j_1}^0,\ldots,(\widetilde{\mathbb{Z}}_{+})_{i_k,j_k}^0\in \widetilde{\mathfrak{Z}}$ such that $U_\textsf{0}\setminus\{\textsf{0}\}\subseteq \widetilde{\mathbb{Z}}_{+})_{i_1,j_1}^0\cup\ldots\cup(\widetilde{\mathbb{Z}}_{+})_{i_k,j_k}^0$. This implies that zero $\textsf{0}$ is an isolated point of $\left(\textbf{B}(\mathbb{Z}_{+},\theta)^0,\tau\right)$, a contradiction.

$(iii)$ Fix an arbitrary open neighbourhood $U_\textsf{0}$ of zero  with the compact closure and an open neighbourhood $V_\textsf{0}$ of zero in $\left(\textbf{B}(\mathbb{Z}_{+},\theta)^0,\tau\right)$. Then the family
\begin{equation*}
\mathscr{V}=\left\{V_\textsf{0}, \left\{(\widetilde{\mathbb{Z}}_{+})_{i,j}^0\colon i,j\in\mathbb{N}_0\right\}\right\}
\end{equation*}
is an open cover of $\operatorname{cl}_{\left(\textbf{B}(\mathbb{Z}_{+},\theta)^0,\tau\right)}(U_\textsf{0})$. Since $\operatorname{cl}_{\left(\textbf{B}(\mathbb{Z}_{+},\theta)^0,\tau\right)}(U_\textsf{0})$ is compact, $\operatorname{cl}_{\left(\textbf{B}(\mathbb{Z}_{+},\theta)^0,\tau\right)}(U_\textsf{0})\setminus V_\textsf{0}$ intersects finitely many elements of the family $\widetilde{\mathfrak{Z}}$, and hence $U_\textsf{0}\setminus V_\textsf{0}$ intersects finitely many elements of the family $\widetilde{\mathfrak{Z}}$  as well.

$(iv)$ We claim that for every open neighbourhood $U_\textsf{0}$ of zero in $\left(\textbf{B}(\mathbb{Z}_{+},\theta)^0,\tau\right)$ there exists a non-negative integer $i_0$ such that $U_\textsf{0}$ intersects infinitely many elements of the family $\widetilde{\mathfrak{Z}}$ of the form $(\widetilde{\mathbb{Z}}_{+})_{i_0,j}^0$. Indeed, suppose to the contrary and pick an open neighbourhood $U_\textsf{0}$ of zero which hasn't this property. Without loos of generality we may assume that the closure $\operatorname{cl}_{\left(\textbf{B}(\mathbb{Z}_{+},\theta)^0,\tau\right)}(U_\textsf{0})$ is a compact set. Then the separate continuity of the semigroup operation in $\left(\textbf{B}(\mathbb{Z}_{+},\theta)^0,\tau\right)$ implies that there exists an open neighbourhood $V_\textsf{0}\subseteq U_\textsf{0}$ of zero in $\left(\textbf{B}(\mathbb{Z}_{+},\theta)^0,\tau\right)$ such that  $V_\textsf{0}\cdot (1,0_{\mathbb{Z}_{+}},0)\subseteq U_\textsf{0}$. Since
\begin{equation*}
  (i,z,j)\cdot(1,0_{\mathbb{Z}_{+}},0)=
\left\{
  \begin{array}{cl}
    (i+1,0_{\mathbb{Z}_{+}},0), & \hbox{if~} j=0;\\
    (i,z,j-1),                  & \hbox{if~} j\geqslant1,
  \end{array}
\right. \qquad i,j\in\mathbb{N}_0,
\end{equation*}
our assumption implies that the set $U_\textsf{0}\setminus V_\textsf{0}$ intersects infinitely many elements of the family $\widetilde{\mathfrak{Z}}$, which contradicts assertion $(iii)$.

Next we suppose that that for an arbitrary open neighbourhood $W_\textsf{0}$ of zero in $\left(\textbf{B}(\mathbb{Z}_{+},\theta)^0,\tau\right)$ there exists a non-negative integer $i_0$ such that $W_\textsf{0}$ intersects infinitely many elements of the family $\widetilde{\mathfrak{Z}}$ of the form $(\widetilde{\mathbb{Z}}_{+})_{i_0,j}^0$. We shall prove that for an arbitrary non-negative integer $k$ the neighbourhood $W_\textsf{0}$ intersects infinitely many elements of the family $\widetilde{\mathfrak{Z}}$ of the form $(\widetilde{\mathbb{Z}}_{+})_{k,j}^0$. The separate continuity of the semigroup operation in $\left(\textbf{B}(\mathbb{Z}_{+},\theta)^0,\tau\right)$ implies that there exists an open neighbourhood $V_\textsf{0}\subseteq W_\textsf{0}$ of zero in $\left(\textbf{B}(\mathbb{Z}_{+},\theta)^0,\tau\right)$ such that $(k,0_{\mathbb{Z}_{+}},i_0)\cdot V_\textsf{0}\subseteq W_\textsf{0}$, and hence our assertion holds.

The proof of statement $(v)$ is similar to $(iv)$.

$(vi)$ Suppose to the contrary that there exist a non-negative integer $i_0$ and an open neighbourhood $U_\textsf{0}$ of zero with the compact closure in $\left(\textbf{B}(\mathbb{Z}_{+},\theta)^0,\tau\right)$ such that the family $\mathfrak{H}_{i_0}$ is infinite. Then the separate continuity of the semigroup operation in $\left(\textbf{B}(\mathbb{Z}_{+},\theta)^0,\tau\right)$ implies that there exists an open neighbourhood $V_\textsf{0}\subseteq U_\textsf{0}$ of zero in $\left(\textbf{B}(\mathbb{Z}_{+},\theta)^0,\tau\right)$ such that $V_\textsf{0}\cdot (1,0_{\mathbb{Z}_{+}},0)\subseteq U_\textsf{0}$. By $(iv)$ the neighbourhood $U_\textsf{0}$  intersects infinitely many elements of the family $\widetilde{\mathfrak{Z}}$ of the form $(\widetilde{\mathbb{Z}}_{+})_{i_0,j}^0$. Then our assumption and the following formula
\begin{equation*}
  (i,z,j)\cdot(1,0_{\mathbb{Z}_{+}},0)=
\left\{
  \begin{array}{cl}
    (i+1,0_{\mathbb{Z}_{+}},0), & \hbox{if~} j=0;\\
    (i,z,j-1),                  & \hbox{if~} j\geqslant1,
  \end{array}
\right. \qquad i,j\in\mathbb{N}_0,
\end{equation*}
imply that the set $U_\textsf{0}\setminus V_\textsf{0}$ intersects infinitely many elements of the family $\widetilde{\mathfrak{Z}}$, which contradicts assertion $(iii)$.

The proof of statement $(vii)$ is similar to $(vi)$.

$(viii)$ Fix an arbitrary non-negative integer $i_0$. Suppose to the contrary that there exists an open neighbourhood $U_\textsf{0}$ of zero with compact closure in $\left(\textbf{B}(\mathbb{Z}_{+},\theta)^0,\tau\right)$ such that the set $\left\{(i_0,0_{\mathbb{Z}_{+}},j)\colon j\in \mathbb{N}_0\right\}\setminus U_\textsf{0}$ is infinite. Since for any non-negative integer $j$ the subspace $(\mathbb{Z}_+)_{0,j}$ is open and discrete in $\left(\textbf{B}(\mathbb{Z}_{+},\theta)^0,\tau\right)$, our above assumption implies that the set $\left\{(i_0,0_{\mathbb{Z}_{+}},j)\colon j\in \mathbb{N}_0\right\}\setminus \operatorname{cl}_{\left(\textbf{B}(\mathbb{Z}_{+},\theta)^0,\tau\right)}(U_\textsf{0})$ is infinite too. By $(i)$ for every non-negative integer $j$ the set $(\widetilde{\mathbb{Z}}_{+})_{i_0,j}^0$ is compact and hence the discreetness of $(\mathbb{Z})_{i_0,j}$ implies that $(\widetilde{\mathbb{Z}}_{+})_{i_0,j}^0\cap U_\textsf{0}$ is compact, as well. Then the separate continuity of the semigroup operation in $\left(\textbf{B}(\mathbb{Z}_{+},\theta)^0,\tau\right)$ implies that there exists an open neighbourhood $V_\textsf{0}\subseteq U_\textsf{0}$ of zero in $\left(\textbf{B}(\mathbb{Z}_{+},\theta)^0,\tau\right)$ such that $(i_0,1_{\mathbb{Z}_{+}},i_0)\cdot V_\textsf{0}\subseteq U_\textsf{0}$, where $1_{\mathbb{Z}_{+}}$ is a generator of the additive group of integers $\mathbb{Z}_{+}$. By $(vi)$ there exist finitely many $(\widetilde{\mathbb{Z}}_{+})_{i_0,j}^0\in\widetilde{\mathfrak{Z}}$ such that $(\widetilde{\mathbb{Z}}_{+})_{i_0,j}^0\cap U_\emph{\textsf{0}}=\varnothing$. Then our assumption and the equality
\begin{equation*}
  (i_0,1_{\mathbb{Z}_{+}},i_0)\cdot(i_0,z,j)=(i_0,1_{\mathbb{Z}_{+}}+z,j), \qquad j\in\mathbb{N}_0,
\end{equation*}
imply that the set $U_\textsf{0}\setminus V_\textsf{0}$ intersects infinitely many elements of the family $\widetilde{\mathfrak{Z}}$, which contradicts assertion $(iii)$.

The proof of statement $(ix)$ is similar to $(viii)$.

$(x)$ Suppose to the contrary that there exist a non-negative integer $i_0$, an open neighbourhood $U_\textsf{0}$ of zero with the compact closure in $\left(\textbf{B}(\mathbb{Z}_{+},\theta)^0,\tau\right)$ and  infinitely many subsets of the form $(\mathbb{Z}_{+})_{i_0,j}^0$ such that $(\mathbb{Z}_{+})_{i_0,j}^0\nsubseteq U_\textsf{0}$. Then the separate continuity of the semigroup operation in $\left(\textbf{B}(\mathbb{Z}_{+},\theta)^0,\tau\right)$ implies that there exists an open neighbourhood $V_\textsf{0}\subseteq U_\textsf{0}$ of zero in $\left(\textbf{B}(\mathbb{Z}_{+},\theta)^0,\tau\right)$ such that $(i_0,1_{\mathbb{Z}_{+}},i_0)\cdot V_\textsf{0}\subseteq U_\textsf{0}$, where $1_{\mathbb{Z}_{+}}$ is a generator of the additive group of integers $\mathbb{Z}_{+}$. By statement $(viii)$ there exist finitely many subsets of the form $(\mathbb{Z}_{+})_{i_0,j}^0$ such that $(\mathbb{Z}_{+})_{i_0,j}^0\cap U_\textsf{0}=\varnothing$. Then our assumption and the equality
\begin{equation*}
  (i_0,1_{\mathbb{Z}_{+}},i_0)\cdot(i_0,z,j)=(i_0,1_{\mathbb{Z}_{+}}+z,j), \qquad j\in\mathbb{N}_0,
\end{equation*}
imply that the set $U_\textsf{0}\setminus V_\textsf{0}$ intersects infinitely many elements of the family $\widetilde{\mathfrak{Z}}$, which contradicts statement $(iii)$.

The proof of statement $(xi)$ is similar to $(x)$.

$(xii)$ Suppose to the contrary that there exists an open neighbourhood $U_\textsf{0}$ of zero with the compact closure in $\left(\textbf{B}(\mathbb{Z}_{+},\theta)^0,\tau\right)$ such that
\begin{equation*}
(\mathbb{Z}_{+})_{i_1,j_1}^0\cup\ldots\cup(\mathbb{Z}_{+})_{i_k,j_k}^0\cup U_\textsf{0}\neq\textbf{B}(\mathbb{Z}_{+},\theta)^0
\end{equation*}
for any finite family $\left\{(\mathbb{Z}_{+})_{i_1,j_1}^0,\ldots,(\mathbb{Z}_{+})_{i_k,j_k}^0\right\}$. This assumption, statements $(x)$ and $(xi)$ imply that there exists an infinite sequence $\left\{(i_n,g_n,j_n)\colon n\in\mathbb{N}\right\}$ in $\textbf{B}(\mathbb{Z}_{+},\theta)^0\setminus U_\textsf{0}$ such that $g_n\in \mathbb{Z}_{+}$ for $n\in\mathbb{N}$, $\left\{i_n\colon n\in\mathbb{N}\right\}$ and  $\left\{j_n\colon n\in\mathbb{N}\right\}$ are increasing sequences of the positive integers.

Also, statements $(x)$ and $(xi)$ imply that without loss of generality we may assume that for elements of the sequence $\left\{(i_n,g_n,j_n)\colon n\in\mathbb{N}\right\}$ the following condition holds:
\begin{equation*}
  (i_n+1,g_n,j_n), (i_n,g_n,j_n+1)\in U_\textsf{0}.
\end{equation*}
By the separate continuity of the semigroup operation in $\left(\textbf{B}(\mathbb{Z}_{+},\theta)^0,\tau\right)$ there exists an open neighbourhood $V_\textsf{0}\subseteq U_\textsf{0}$ of zero in $\left(\textbf{B}(\mathbb{Z}_{+},\theta)^0,\tau\right)$ such that
\begin{equation*}
(0,0_{\mathbb{Z}_{+}},1)\cdot V_\textsf{0}\subseteq U_\textsf{0} \qquad \hbox{and} \qquad V_\textsf{0}\cdot(1,0_{\mathbb{Z}_{+}},0)\subseteq U_\textsf{0}.
\end{equation*}
Then statements $(x)$ and $(xi)$, our assumption and the equalities
\begin{equation*}
  (0,0_{\mathbb{Z}_{+}},1)\cdot(i,z,j)=(i-1,z,j) \qquad \hbox{and} \qquad (i,z,j)\cdot(1,0_{\mathbb{Z}_{+}},0)=(i,z,j-1), \qquad i,j\in\mathbb{N}_0,
\end{equation*}
imply that the set $U_\textsf{0}\setminus V_\textsf{0}$ intersects infinitely many elements of the family $\widetilde{\mathfrak{Z}}$, which contradicts $(iii)$.
\end{proof}

\begin{theorem}\label{theorem-4.13}
Let $\theta$ be the annihilating homomorphism and $\tau$ be a shift-continuous locally compact topology on \emph{$\textbf{B}(\mathbb{Z}_{+},\theta)^0$} such that the induced topology of $\tau$ on \emph{$\textbf{B}(\mathbb{Z}_{+},\theta)$} coincides with the topology $\tau_{\mathbf{cf}}$. Then either \emph{$\left(\textbf{B}(\mathbb{Z}_{+},\theta)^0,\tau\right)$} is compact or zero is an isolated point in \emph{$\left(\textbf{B}(\mathbb{Z}_{+},\theta)^0,\tau\right)$}. Moreover, \emph{$\left(\textbf{B}(\mathbb{Z}_{+},\theta)^0,\tau\right)$} is compact, then the family
\begin{equation*}
  \mathscr{B}_\textbf{0}=\left\{O_\emph{\textbf{0}}=\textbf{B}(\mathbb{Z}_{+},\theta)^0\setminus\left(M_{p_1}\cup\cdots\cup M_{p_k}\right)\colon p_1,\ldots,p_k\in\mathbb{N}\right\},
\end{equation*}
where for any positive integer $p$ the set $M_p$ is one of the following sets: $\left\{\left(0,0_{\mathbb{Z}_{+}},j\right)\right\}$, $\left\{\left(i,0_{\mathbb{Z}_{+}},0\right)\right\}$ or $(\widetilde{\mathbb{Z}}_{+})_{m,n}^0$, $i,j\in\mathbb{N}_0$, $m,n\in\mathbb{N}$, defines a base of the topology $\tau$ at zero $\emph{\textsf{0}}$ of \emph{$\textbf{B}(\mathbb{Z}_{+},\theta)^0$}.
\end{theorem}

\begin{proof}
It is obvious that $\left(\textbf{B}(\mathbb{Z}_{+},\theta),\tau_{\mathbf{cf}}\right)$ with the adjoined isolated zero is a locally compact semitopological semigroup.

Later we assume that zero $\textsf{0}$ is non-isolated point of $\left(\textbf{B}(\mathbb{Z}_{+},\theta)^0,\tau\right)$.

By Proposition~\ref{proposition-4.12}$(i)$ for any non-negative integers $i$ and $j$ the set $(\widetilde{\mathbb{Z}}_{+})_{i,j}^0$ is open and compact in $\left(\textbf{B}(\mathbb{Z}_{+},\theta)^0,\tau\right)$, and hence by Proposition~\ref{proposition-2.4}$(i)$ and the definition of the topology $\tau_{\mathbf{cf}}$ on $\textbf{B}(\mathbb{Z}_{+},\theta)$ we have that $(0,0_{\mathbb{Z}_{+}},i)$ and $(j,0_{\mathbb{Z}_{+}},0)$ are isolated point in $\left(\textbf{B}(\mathbb{Z}_{+},\theta)^0,\tau\right)$  for all non-negative integers $i$ and~$j$.  For any non-negative integers $i$ and $j$ put
\begin{equation*}
  A_{i,j}=
\left\{
  \begin{array}{ll}
    (\widetilde{\mathbb{Z}}_{+})_{0,j}^0\cup\left\{\left(0,0_{\mathbb{Z}_{+}},j\right)\right\}, & \hbox{if~} i=0;\\
    (\widetilde{\mathbb{Z}}_{+})_{i,0}^0\cup\left\{\left(i,0_{\mathbb{Z}_{+}},0\right)\right\}, & \hbox{if~} j=0;\\
    (\widetilde{\mathbb{Z}}_{+})_{i,j}^0\cup(\widetilde{\mathbb{Z}}_{+})_{i-1,j-1}^0, & \hbox{if~} i\neq 0 \hbox{~and~} j\neq 0.
  \end{array}
\right.
\end{equation*}
Then for any non-negative integers $i$ and $j$ we have that $(\mathbb{Z}_{+})_{i,j}^0\subseteq A_{i,j}$ and $A_{i,j}$ is a compact open subset of $\left(\textbf{B}(\mathbb{Z}_{+},\theta)^0,\tau\right)$. By Proposition~\ref{proposition-4.12}$(xii)$ for an arbitrary open neighbourhood $U_\textbf{0}$ of zero in $\left(\textbf{B}(\mathbb{Z}_{+},\theta)^0,\tau\right)$ there exist finitely many subsets $(\mathbb{Z}_{+})_{i_1,j_1}^0,\ldots,(\mathbb{Z}_{+})_{i_k,j_k}^0$ such that $(\mathbb{Z}_{+})_{i_1,j_1}^0\cup\ldots\cup(\mathbb{Z}_{+})_{i_k,j_k}^0\cup U_\textsf{0}=\textbf{B}(\mathbb{Z}_{+},\theta)^0$. Then $O(U_\textbf{0})=\textbf{B}(\mathbb{Z}_{+},\theta)^0\setminus\left(A_{i_1,j_1}\cup\cdots\cup A_{i_k,j_k}\right)$ is an open-and-closed neighbourhood of zero in $\left(\textbf{B}(\mathbb{Z}_{+},\theta)^0,\tau\right)$ such that $O(U_\textbf{0})\subseteq U_\textbf{0}$. This implies that if the family $\left\{U_\textbf{0}^\alpha\right\}_{\alpha\in\Omega}$ is a base of the topology $\tau$ at zero of $\textbf{B}(\mathbb{Z}_{+},\theta)^0$, then so constructed family $\left\{O(U_\textbf{0}^\alpha)\right\}_{\alpha\in\Omega}$ is a base of the topology $\tau$ at zero of $\textbf{B}(\mathbb{Z}_{+},\theta)^0$ too. This completes the last statement of the theorem.
\end{proof}

For all non-negative integers $k$ and $l$ we put  $\left(-\mathbb{N}\right)_{k,l}=\left\{(k,-n,l)\colon n\in\mathbb{N}\right\}$, $\mathfrak{N}^+=\left\{\mathbb{N}_{i,j}^0\colon i,j\in \mathbb{N}_0\right\}$ and $\mathfrak{N}^-=\left\{\left(-\mathbb{N}\right)_{i,j}\colon i,j\in \mathbb{N}_0\right\}$.

\begin{proposition}\label{proposition-4.14}
If $\theta$ is the annihilating homomorphism, $\tau$ is a locally compact shift-continuous topology on \emph{$\textbf{B}(\mathbb{Z}_{+},\theta)^0$} such that zero $\emph{\textsf{0}}$ is a non-isolated point in \emph{$\left(\textbf{B}(\mathbb{Z}_{+},\theta)^0,\tau\right)$} and the induced topology of $\tau$ on \emph{$\textbf{B}(\mathbb{Z}_{+},\theta)$} coincides with $\tau_{+}$, then the following assertions hold:

\begin{itemize}
  \item[$(i)$] for any non-negative integers $i$ and $j$ the set $\mathbb{N}^0_{i,j}$ is open and compact in \emph{$\left(\textbf{B}(\mathbb{Z}_{+},\theta)^0,\tau\right)$} and hence it is closed;

  \item[$(ii)$] every open neighbourhood $U_\emph{\textsf{0}}$ of zero in \emph{$\left(\textbf{B}(\mathbb{Z}_{+},\theta)^0,\tau\right)$} intersects infinitely many elements of the family $\mathfrak{N}^+$;

  \item[$(iii)$] for any non-negative integer $i_0$ every open neighbourhood $U_\emph{\textsf{0}}$ of zero in \emph{$\left(\textbf{B}(\mathbb{Z}_{+},\theta)^0,\tau\right)$} intersects almost all elements of the family $\mathfrak{N}^+$ of the form $\mathbb{N}_{i_0,j}^0$;

  \item[$(iv)$] every open neighbourhood $U_\emph{\textsf{0}}$ of zero in \emph{$\left(\textbf{B}(\mathbb{Z}_{+},\theta)^0,\tau\right)$} contains almost all elements of the form $(0,0_{\mathbb{Z}_{+}},j)$, $j\in \mathbb{N}_0$;

  \item[$(v)$] every open neighbourhood $U_\emph{\textsf{0}}$ of zero in \emph{$\left(\textbf{B}(\mathbb{Z}_{+},\theta)^0,\tau\right)$} contains almost all subsets of the form $(\mathbb{Z}_{+})_{0,j}^0$, $j\in \mathbb{N}_0$.
\end{itemize}
\end{proposition}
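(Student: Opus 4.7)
I would prove the five assertions in order, following the step-by-step template of Proposition~\ref{proposition-4.12}, and exploiting that $\tau|_{\textbf{B}(\mathbb{Z}_+,\theta)}$ coincides with $\tau_+$ of Example~\ref{example-4.7}: every $(i,g,j)$ with $g\neq 0_{\mathbb{Z}_+}$ or $ij=0$ is isolated, and the non-isolated points $(k,0_{\mathbb{Z}_+},l)$ with $k,l\geq 1$ are the one-point-compactification limits of $\mathbb{N}_{k-1,l-1}$.

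For (i), inside $(\textbf{B}(\mathbb{Z}_+,\theta),\tau_+)$ the set $\mathbb{N}_{i,j}^0$ is a basic open neighbourhood of its added point $(i+1,0_{\mathbb{Z}_+},j+1)$ together with a union of isolated singletons, hence open; it is compact as the Alexandroff compactification of the countable discrete set $\mathbb{N}_{i,j}$. Since $\textsf{0}\notin\mathbb{N}_{i,j}^0$, Hausdorffness of $\tau$ lifts openness from $\tau_+$ to $\tau$: separate $\textsf{0}$ from each $p\in\mathbb{N}_{i,j}^0$ to shrink a basic $\tau_+$-neighbourhood of $p$ to one in $\tau$. Compactness transfers verbatim, and Hausdorff-compact sets are closed.

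For items (ii)--(v) the recipe is uniform: suppose the conclusion fails; by (i), shrink $U(\textsf{0})$ to be disjoint from every $\mathbb{N}_{k,l}^0$ whose exclusion from $U(\textsf{0})$ is permitted by the failure; by local compactness, assume $\operatorname{cl}_\tau U(\textsf{0})$ is compact. Apply separate continuity at $\textsf{0}$ with a well-chosen fixed $y\in\textbf{B}(\mathbb{Z}_+,\theta)$ to produce open $V(\textsf{0})\subseteq U(\textsf{0})$ with $y\cdot V(\textsf{0})\subseteq U(\textsf{0})$ or $V(\textsf{0})\cdot y\subseteq U(\textsf{0})$, then exhibit $\mathscr{U}=\{V(\textsf{0})\}\cup\{\mathbb{N}_{k,l}^0:k,l\in N\}$ as an open cover of $\operatorname{cl}_\tau U(\textsf{0})$ and show via direct semigroup computation (leveraging that $\theta$ annihilates) that $\mathscr{U}$ admits no finite subcover. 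My intended shifts are: both sides by $(1,0_{\mathbb{Z}_+},1)$ for (ii), since $(1,0_{\mathbb{Z}_+},1)\cdot(0,g,j)=(1,0_{\mathbb{Z}_+},1+j)\in\mathbb{N}_{0,j}^0$ and $(i,g,0)\cdot(1,0_{\mathbb{Z}_+},1)=(i+1,0_{\mathbb{Z}_+},1)\in\mathbb{N}_{i,0}^0$; analogous shifts by $(i_0,0_{\mathbb{Z}_+},i_0)$ for (iii); left shift by $(0,0_{\mathbb{Z}_+},1)$ for (iv), which sends $(0,0_{\mathbb{Z}_+},j)\mapsto(0,0_{\mathbb{Z}_+},1+j)$; and right shift by $(0,0_{\mathbb{Z}_+},1)$ for (v), whose effect $(i,g,j)\mapsto(i,g,j+1)$ carries $(\mathbb{Z}_+)_{0,j}^0$ bijectively onto $(\mathbb{Z}_+)_{0,j+1}^0$ and propagates the ``$U(\textsf{0})$ contains $(\mathbb{Z}_+)_{0,j}^0$'' property across $j$.

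The main obstacle is (ii). Because $\theta$ is annihilating, the two-sided $(1,0_{\mathbb{Z}_+},1)$-shift only rules out the boundary strata $\{(0,g,j):g\in\mathbb{Z}\}$ and $\{(i,g,0):g\in\mathbb{Z}\}$ from $V(\textsf{0})\setminus\{\textsf{0}\}$, leaving the interior set $\{(i,-n,j):i,j,n\geq 1\}$ pointwise fixed under any $(a,0_{\mathbb{Z}_+},b)$-shift with $\max(a,b)\leq\min(i,j)$. My plan to finish (ii) is a chain of continuity steps: pick any witness $(i_0,-n_0,j_0)\in V(\textsf{0})$ and invoke separate continuity with $(i_0+1,0_{\mathbb{Z}_+},i_0+1)$ on the left, for which the direct computation $(i_0+1,0_{\mathbb{Z}_+},i_0+1)\cdot(i_0,-n_0,j_0)=(i_0+1,0_{\mathbb{Z}_+},j_0+1)\in\mathbb{N}_{i_0,j_0}^0\subseteq\bigcup\mathfrak{N}^+$ yields the contradiction once the shrinking chain of open $\textsf{0}$-neighbourhoods is exploited via compactness of $\operatorname{cl}_\tau U(\textsf{0})$---because every such neighbourhood must contain cofinitely many of the isolated points of $U(\textsf{0})$, some stage of the chain still contains a point with small enough first coordinate to force its translate into $\bigcup\mathfrak{N}^+\cap U(\textsf{0})=\varnothing$.
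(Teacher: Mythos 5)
Your item (i) is fine and matches the paper, and your choice of the \emph{left} shift by $(0,0_{\mathbb{Z}_{+}},1)$ for item (iv) is a good one (it sends every point of $\mathbb{N}_{0,j}^0$, including the adjoined point $(1,0_{\mathbb{Z}_{+}},j+1)$, to $(0,0_{\mathbb{Z}_{+}},j+1)$, so (iv) follows from (iii) applied to a suitable $V(\textsf{0})$). But the engine you propose for (ii)--(v) is broken: the family $\mathscr{U}=\{V(\textsf{0})\}\cup\{\mathbb{N}_{k,l}^0\colon k,l\in N\}$ is \emph{not} an open cover of $\operatorname{cl}_{\tau}U(\textsf{0})$. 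Indeed $\bigcup\mathfrak{N}^+$ omits every element $(i,g,j)$ with $g<0$ and every isolated point $(i,0_{\mathbb{Z}_{+}},j)$ with $ij=0$; such points lie in $U(\textsf{0})$ (in case (ii), after your shrinking, $U(\textsf{0})\setminus\{\textsf{0}\}$ consists of nothing else) and hence in $\operatorname{cl}_{\tau}U(\textsf{0})$, and nothing places them in $V(\textsf{0})$. So the ``no finite subcover'' contradiction never gets started. This is exactly why the paper does not reuse the covering template of Proposition~\ref{proposition-4.12} here, but argues instead that a suitable \emph{infinite} subset of the compactum $\operatorname{cl}_{\tau}U(\textsf{0})$ has no accumulation point: its members are isolated points of the space, $\textsf{0}$ is excluded by $V(\textsf{0})$, and a point $(k,0_{\mathbb{Z}_{+}},l)$ with $k,l\geq 1$ can only accumulate points of $\mathbb{N}_{k-1,l-1}$, which the relevant set avoids.

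Your patch for (ii) also does not close. Each stage of your chain of left shifts by $(i_0+1,0_{\mathbb{Z}_{+}},i_0+1)$ excludes from the new neighbourhood precisely the points with first coordinate $\leq i_0$ (their translates land in $\bigcup\mathfrak{N}^+$), and your cofiniteness claim (which is indeed provable here by the accumulation-point argument) then only says that $U(\textsf{0})$ has finitely many points in each strip $\left\{(i,-n,j)\colon i\leq m\right\}$; that conclusion is not self-contradictory and cannot be improved by the shifts you use, since the idempotents $(a,0_{\mathbb{Z}_{+}},a)$ act trivially on every $(i,-n,j)$ with $\min\{i,j\}\geq a$ --- so no stage of the chain ``still contains a point with small enough first coordinate.'' A correct finish must employ translations that genuinely move these deep interior points; the paper's proof of (ii) is built on $V(\textsf{0})\cdot(1,0_{\mathbb{Z}_{+}},0)\subseteq U(\textsf{0})$ together with the no-accumulation-point argument. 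Likewise, for (iii) the element $(i_0,0_{\mathbb{Z}_{+}},i_0)$ is an idempotent fixing all relevant points; the paper's (iii) is a genuinely multi-step argument (first \emph{some} row meets infinitely many $\mathbb{N}_{i_0,j}^0$, then transfer between rows by $(i,0_{\mathbb{Z}_{+}},i_0)$, then the upgrade from ``infinitely many'' to ``almost all'' via a maximal subsequence and the shift $(1,0_{\mathbb{Z}_{+}},0)$), and your plan omits the upgrade, which is where the work lies. Finally, for (v) the right shift by $(0,0_{\mathbb{Z}_{+}},1)$ never changes the group coordinate, so it cannot manufacture whole strata $(\mathbb{Z}_{+})_{0,j}^0$ from the single points delivered by (iv), and your ``propagation'' has no base case, since no neighbourhood is yet known to contain a full stratum; the paper instead applies (iv) to a neighbourhood $V_k(\textsf{0})$ with $(0,k,0)\cdot V_k(\textsf{0})\subseteq U(\textsf{0})$ for each $k\in\mathbb{Z}_{+}$.
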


\begin{proof}
$(i)$ Since $\textbf{B}(\mathbb{Z}_{+},\theta)$ is an open subset of $\left(\textbf{B}(\mathbb{Z}_{+},\theta)^0,\tau\right)$, by the definition of $\tau_{+}$ the set $\mathbb{N}_{i,j}^0$ is open for any non-negative integers $i$ and $j$. Moreover, the definition of the topology $\tau_{+}$ on  $\textbf{B}(\mathbb{Z}_{+},\theta)$ implies that $\mathbb{N}_{i,j}^0$ is compact, and since $\tau$ is Hausdorff, $\mathbb{N}_{i,j}^0$ is closed.

$(ii)$ Suppose to the contrary that there exists an open neighbourhood $U_\textsf{0}$ of zero with the compact closure in $\left(\textbf{B}(\mathbb{Z}_{+},\theta)^0,\tau\right)$ which intersects finitely many elements of the family $\mathfrak{N}^+$. By the definition of the topology $\tau_{+}$ on $\textbf{B}(\mathbb{Z}_{+},\theta)$ the set $\mathbb{N}_{i,j}^0$ is compact for all non-negative integers $i$ and $j$, and hence without loss of generality we may assume that $U_\textsf{0}\cap\bigcup\mathfrak{N}^+=\varnothing$. By the equality
\begin{equation*}
  (k,0_{\mathbb{Z}_{+}},k)\cdot(i,z,j)=
\left\{
  \begin{array}{ll}
    (k,0_{\mathbb{Z}_{+}},k+j-i), & \hbox{if~} k>i;\\
    (i,z,j),                      & \hbox{if~}  k\leqslant i,
  \end{array}
\right. \qquad i,j,k\in\mathbb{N}_0,
\end{equation*}
the separate continuity of the semigroup operation in $\left(\textbf{B}(\mathbb{Z}_{+},\theta)^0,\tau\right)$ and assertion $(i)$  the set
\begin{equation*}
  \rho_{(k+1,0_{\mathbb{Z}_{+}},k+1)}^{-1}\left(\mathbb{N}^0_{k,l}\right)=
\left\{
  \begin{array}{ll}
    (\widetilde{\mathbb{Z}}_{+})_{k,l}^0\cup(\widetilde{\mathbb{Z}}_{+})_{k-1,l-1}^0\cup\cdots\cup(\widetilde{\mathbb{Z}}_{+})_{k-l,0}^0\cup \left\{(k-l,0_\mathbb{Z},0)\right\}, & \hbox{if~} k\geqslant l;\\
    (\widetilde{\mathbb{Z}}_{+})_{k,l}^0\cup(\widetilde{\mathbb{Z}}_{+})_{k-1,l-1}^0\cup\cdots\cup(\widetilde{\mathbb{Z}}_{+})_{0,l-k}^0\cup \left\{(0,0_\mathbb{Z},l-k)\right\}, & \hbox{if~} k\leqslant l
  \end{array}
\right.
\end{equation*}
is open-and-closed in $\left(\textbf{B}(\mathbb{Z}_{+},\theta)^0,\tau\right)$. This and the definition of the topology $\tau_+$ imply that the set $(\widetilde{\mathbb{Z}}_{+})_{k,l}^0$ is opens-and-closed and the points $(0,0_{\mathbb{Z}_{+}},k)$ and $(k,0_{\mathbb{Z}_{+}},0)$ are isolated in $\left(\textbf{B}(\mathbb{Z}_{+},\theta)^0,\tau\right)$ for all $k,l\in\mathbb{N}_0$. By the definition of the topology $\tau_+$ and statement $(i)$ we have that $\left(-\mathbb{N}\right)_{k,l}=(\widetilde{\mathbb{Z}}_{+})_{k,l}^0\setminus\mathbb{N}_{k,l}^0$ is an open-and-closed discrete subspace of $\left(\textbf{B}(\mathbb{Z}_{+},\theta)^0,\tau\right)$ for all $k,l\in \mathbb{N}_0$. This implies that the set $U_\textsf{0}\cap\left(-\mathbb{N}\right)_{k,l}$ is finite for all $k,l\in \mathbb{N}_0$. By the separate continuity of the semigroup operation of $\left(\textbf{B}(\mathbb{Z}_{+},\theta)^0,\tau\right)$ there exists an open neighbourhood $V_\textsf{0}\subseteq U_\textsf{0}$  of zero in $\left(\textbf{B}(\mathbb{Z}_{+},\theta)^0,\tau\right)$ such that $V_\textsf{0}\cdot (1,0_{\mathbb{Z}_{+}},0)\subseteq U_\textsf{0}$. Then by the equality
\begin{equation*}
  (i,z,j)\cdot(1,0_{\mathbb{Z}_{+}},0)=
\left\{
  \begin{array}{cl}
    (i+1,0_{\mathbb{Z}_{+}},0), & \hbox{if~} j=0;\\
    (i,z,j-1),                  & \hbox{if~} j\geqslant1,
  \end{array}
\right. \qquad i,j\in\mathbb{N}_0,
\end{equation*}
we have that $U_\textsf{0}\setminus V_\textsf{0}$ is an infinite subset of the compactum $\operatorname{cl}_{\left(\textbf{B}(\mathbb{Z}_{+},\theta)^0,\tau\right)}(U(0))$, and the definition of the topology $\tau_+$ on $\textbf{B}(\mathbb{Z}_{+},\theta)$ implies that the set $U_\textsf{0}\setminus V_\textsf{0}$ does not have an accumulation point in $\operatorname{cl}_{\left(\textbf{B}(\mathbb{Z}_{+},\theta)^0,\tau\right)}(U_\textsf{0})$, which contradicts the compactness of $\operatorname{cl}_{\left(\textbf{B}(\mathbb{Z}_{+},\theta)^0,\tau\right)}(U_\textsf{0})$.

$(iii)$ We claim that for every open neighbourhood $U_\textsf{0}$ of zero with the compact closure in $\left(\textbf{B}(\mathbb{Z}_{+},\theta)^0,\tau\right)$ there exists a non-negative integer $i_0$ such that $U_\textsf{0}$ intersects infinitely many elements  of the form $\mathbb{N}_{i_0,j}^0$. Indeed, assume the contrary and pick $U_\textsf{0}$ which has not this property. Without loss of generality we may assume that the neighbourhood $U_\textsf{0}$ has the compact closure $\operatorname{cl}_{\left(\textbf{B}(\mathbb{Z}_{+},\theta)^0,\tau\right)}(U_\textsf{0})$. By item $(ii)$ there exists an increasing sequence $\left\{i_n\right\}_{n\in\mathbb{N}}$ of positive integers such that $\mathbb{N}_{i_n,j_n}^0\cap U_\textsf{0}\neq\varnothing$ for some sequence of non-negative integers $\left\{j_n\right\}_{n\in\mathbb{N}}$. Then for every element $i_p$ of the sequence $\left\{i_n\right\}_{n\in\mathbb{N}}$ there exits a maximum non-negative integer $j_{i_p}$ such that $\mathbb{N}_{i_p,j_{i_p}}^0\cap U_\textsf{0}\neq\varnothing$ and $\mathbb{N}_{i_p,j_{i_p}+k}^0\cap U_\textsf{0}=\varnothing$ for any positive integer $k$. By the separate continuity of the semigroup operation of $\left(\textbf{B}(\mathbb{Z}_{+},\theta)^0,\tau\right)$ there exists an open neighbourhood $V_\textsf{0}\subseteq U_\textsf{0}$ of zero in $\left(\textbf{B}(\mathbb{Z}_{+},\theta)^0,\tau\right)$ such that $V_\textsf{0}\cdot (0,0_{\mathbb{Z}_{+}},1)\subseteq U_\textsf{0}$. The above arguments imply that there exists a sequence of distinct points $\left\{\left(i_n,z_n,j_{i_n}\right)\right\}_{n\in\mathbb{N}}\subseteq U_\textsf{0}\setminus V_\textsf{0} \subset\operatorname{cl}_{\left(\textbf{B}(\mathbb{Z}_{+},\theta)^0,\tau\right)}(U_\textsf{0})$. Then the definition of the topology $\tau_+$ on $\textbf{B}(\mathbb{Z}_{+},\theta)$ implies that this sequence has not an accumulation point in the set  $\operatorname{cl}_{\left(\textbf{B}(\mathbb{Z}_{+},\theta)^0,\tau\right)}(U_\textsf{0})$ which contradicts the compactness of $\operatorname{cl}_{\left(\textbf{B}(\mathbb{Z}_{+},\theta)^0,\tau\right)}(U_\textsf{0})$.

By the above for every open neighbourhood $U_\textsf{0}$ of zero in $\left(\textbf{B}(\mathbb{Z}_{+},\theta)^0,\tau\right)$ there exists a non-negative integer $i_0$ such that $U_\textsf{0}\cap\mathbb{N}_{i_0,j}^0\neq\varnothing$ for infinitely many elements of the family $\mathfrak{N}^+$ of the form $\mathbb{N}_{i_0,j}^0$. Since the semigroup operation in $\left(\textbf{B}(\mathbb{Z}_{+},\theta)^0,\tau\right)$ is separately continuous, for any non-negative integer $i$ there exists an open neighbourhood $V_\textsf{0}\subseteq U_\textsf{0}$ of zero in $\left(\textbf{B}(\mathbb{Z}_{+},\theta)^0,\tau\right)$ such that $(i,0_{\mathbb{Z}_{+}},i_0)\cdot V_\textsf{0}\subseteq U_\textsf{0}$. Thus $U_\textsf{0}\cap\mathbb{N}_{i,j}^0\neq\varnothing$ for infinitely many elements of the family $\mathfrak{N}^+$ of the form $\mathbb{N}_{i,j}^0$. Hence we get that $U_\textsf{0}\cap\mathbb{N}_{0,j}^0\neq\varnothing$ for infinitely many elements of the family $\mathfrak{N}^+$ of the form $\mathbb{N}_{0,j}^0$.

Suppose that there exists an infinite increasing sequence $\left\{j_n\right\}_{n\in\mathbb{N}}$ such that $U_\textsf{0}\cap\mathbb{N}_{0,j_n}^0=\varnothing$ for any elements $j_n$ of $\left\{j_n\right\}_{n\in\mathbb{N}}$. Without loss of generality we may assume that the sequence $\left\{j_n\right\}_{n\in\mathbb{N}}$ is maximal, i.e., $U_\textsf{0}\cap\mathbb{N}_{0,j}^0\neq\varnothing$ for any non-negative integer $j\notin\left\{j_n\right\}_{n\in\mathbb{N}}$. Then there exists a subsequence $\left\{j_{n_k}\right\}_{k\in\mathbb{N}}$ of $\left\{j_n\right\}_{n\in\mathbb{N}}$ such that $U_\textsf{0}\cap\mathbb{N}_{0,j_{n_k}}^0=\varnothing$ and $U_\textsf{0}\cap\mathbb{N}_{0,j_{n_k}+1}^0\neq\varnothing$. The separate continuity of the semigroup operation in $\left(\textbf{B}(\mathbb{Z}_{+},\theta)^0,\tau\right)$ implies that there exists an open neighbourhood $V_\textsf{0}\subseteq U_\textsf{0}$ of zero in $\left(\textbf{B}(\mathbb{Z}_{+},\theta)^0,\tau\right)$ such that $(1,0_{\mathbb{Z}_{+}},0)\cdot V_\textsf{0}\subseteq U_\textsf{0}$. It is obvious that our above arguments imply that there exists a sequence of distinct points $\left\{\left(0,z_{n_k},j_{n_k}\right)\right\}_{n\in\mathbb{N}}\subseteq U_\textsf{0}\setminus V_\textsf{0}$ which is a subset of the compactum $\operatorname{cl}_{\left(\textbf{B}(\mathbb{Z}_{+},\theta)^0,\tau\right)}(U_\textsf{0})$. Then the definition of the topology $\tau_+$ on $\textbf{B}(\mathbb{Z}_{+},\theta)$ implies that this sequence has not an accumulation point in $\operatorname{cl}_{\left(\textbf{B}(\mathbb{Z}_{+},\theta)^0,\tau\right)}(U_\textsf{0})$ which contradicts the compactness of $\operatorname{cl}_{\left(\textbf{B}(\mathbb{Z}_{+},\theta)^0,\tau\right)}(U_\textsf{0})$. Hence we have that every open neighbourhood $U_\textsf{0}$ of zero in $\left(\textbf{B}(\mathbb{Z}_{+},\theta)^0,\tau\right)$ intersects almost all elements of the form $\mathbb{N}_{0,j}^0$. Again, since the semigroup operation in $\left(\textbf{B}(\mathbb{Z}_{+},\theta)^0,\tau\right)$ is separately continuous, for any non-negative integer $i$ there exists an open neighbourhood $W_\textsf{0}\subseteq U_\textsf{0}$ of zero in $\left(\textbf{B}(\mathbb{Z}_{+},\theta)^0,\tau\right)$ such that $(i,0_{\mathbb{Z}_{+}},i_0)\cdot W_\textsf{0}\subseteq U_\textsf{0}$. The last inclusion implies assertion $(iii)$.

$(iv)$ Fix an arbitrary open neighbourhood $U_\textsf{0}$ of zero in $\left(\textbf{B}(\mathbb{Z}_{+},\theta)^0,\tau\right)$. Without loss of generality we may assume that the neighbourhood $U_\textsf{0}$ has the compact closure $\operatorname{cl}_{\left(\textbf{B}(\mathbb{Z}_{+},\theta)^0,\tau\right)}(U_\textsf{0})$. By statement $(iii)$ the neighbourhood $U_\textsf{0}$ intersects almost all elements of the family $\mathfrak{N}^+$ of the form $\mathbb{N}_{0,j}^0$ and by the separate continuity of the semigroup operation in $\left(\textbf{B}(\mathbb{Z}_{+},\theta)^0,\tau\right)$ there exists an open neighbourhood $V_\textsf{0}\subseteq U_\textsf{0}$ of zero in $\left(\textbf{B}(\mathbb{Z}_{+},\theta)^0,\tau\right)$ such that  $(1,0_{\mathbb{Z}_{+}},1)\cdot V_\textsf{0}\subseteq U_\textsf{0}$. Since $(1,0_{\mathbb{Z}_{+}},1)\cdot(0,z,k)=(0,0_{\mathbb{Z}_{+}},k)$ the inclusion $(1,0_{\mathbb{Z}_{+}},1)\cdot V_\textsf{0}\subseteq U_\textsf{0}$ implies our assertion.

$(v)$ Fix an arbitrary open neighbourhood $U_\textsf{0}$ of zero with the compact closure in $\left(\textbf{B}(\mathbb{Z}_{+},\theta)^0,\tau\right)$.  The separate continuity of the semigroup operation of $\left(\textbf{B}(\mathbb{Z}_{+},\theta)^0,\tau\right)$ implies that for every element $k$ of the additive group of integers $\mathbb{Z}_{+}$ there exists an open neighbourhood $V_\textsf{0}\subseteq U_\textsf{0}$ of zero in $\left(\textbf{B}(\mathbb{Z}_{+},\theta)^0,\tau\right)$ such that $(0,k,0)\cdot V_\textsf{0}\subseteq U_\textsf{0}$. By statement $(iv)$ the neighbourhood $U_\textsf{0}$ contains almost all elements of the form $(0,0_{\mathbb{Z}_{+}},j)$ and hence the above arguments imply  statement $(v)$.
\end{proof}

\begin{theorem}\label{theorem-4.15}
Let $\theta$ be the annihilating homomorphism and $\tau$ be a shift-continuous locally compact topology on \emph{$\textbf{B}(\mathbb{Z}_{+},\theta)^0$} such that the induced topology of $\tau$ onto \emph{$\textbf{B}(\mathbb{Z}_{+},\theta)$} coincides with the topology $\tau_{+}$. Then zero $\emph{\textsf{0}}$ of \emph{$\textbf{B}(\mathbb{Z}_{+},\theta)^0$} is an isolated point in \emph{$\left(\textbf{B}(\mathbb{Z}_{+},\theta)^0,\tau\right)$}.
\end{theorem}

\begin{proof}
First we observe that Example~\ref{example-4.7} and Proposition~\ref{proposition-4.14} imply that for all non-negative integers $i$ and $j$ and any $n\in\mathbb{Z}_{+}$ the point $(i,n,j)$ is isolated in $(\mathbb{Z}_{+})_{i,j}^0$. Then the proof of assertion $(ii)$ of Proposition~\ref{proposition-4.14} implies that the set $(\mathbb{Z}_{+})_{0,j}^0$ is open-and-closed in $\left(\textbf{B}(\mathbb{Z}_{+},\theta)^0,\tau\right)$ for any non-negative integer $j$.

Fix an arbitrary open neighbourhood $U_\textsf{0}$ of zero with the compact closure in $\left(\textbf{B}(\mathbb{Z}_{+},\theta)^0,\tau\right)$. By Proposition~\ref{proposition-4.14}$(v)$ there exists a positive integer $j_0$ such that $(\mathbb{Z}_{+})_{0,j_0}^0\subseteq U_\textsf{0}$ and hence $(\mathbb{Z}_{+})_{0,j}^0$ is an open-and-closed subset of $\operatorname{cl}_{\left(\textbf{B}(\mathbb{Z}_{+},\theta)^0,\tau\right)}(U(0))$. But $(\mathbb{Z}_{+})_{0,j}^0$ is not compact, which contradicts the compactness of $\operatorname{cl}_{\left(\textbf{B}(\mathbb{Z}_{+},\theta)^0,\tau\right)}(U(0))$.
\end{proof}

The proof of the following theorem is similar to Theorem~\ref{theorem-4.15}.

\begin{theorem}\label{theorem-4.17}
Let $\theta$ be the annihilating homomorphism and $\tau$ be a shift-continuous locally compact topology on \emph{$\textbf{B}(\mathbb{Z}_{+},\theta)^0$} such that the induced topology of $\tau$ onto \emph{$\textbf{B}(\mathbb{Z}_{+},\theta)$} coincides with the topology $\tau_{-}$. Then zero $\emph{\textsf{0}}$ of \emph{$\textbf{B}(\mathbb{Z}_{+},\theta)^0$} is an isolated point in \emph{$\left(\textbf{B}(\mathbb{Z}_{+},\theta)^0,\tau\right)$}.
\end{theorem}

Later we need the following two folklore lemmas.

\begin{lemma}\label{lemma-4.19}
Let $\theta\colon \mathbb{Z}_{+}\to \mathbb{Z}_{+}$ be a homomorphism. Then the image $\theta(\mathbb{Z}_{+})$ is isomorphic to the additive group of integers $\mathbb{Z}_{+}$ if and only if $\theta$ is non-annihilating.
\end{lemma}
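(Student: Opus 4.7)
The plan is to observe that a group homomorphism $\theta\colon \mathbb{Z}_{+}\to \mathbb{Z}_{+}$ is completely determined by the single integer $n=\theta(1_{\mathbb{Z}_{+}})$, where $1_{\mathbb{Z}_{+}}$ denotes a fixed generator of the additive group of integers. Indeed, for every $k\in\mathbb{Z}_{+}$ one has $\theta(k)=kn$, so the image $(\mathbb{Z}_{+})\theta$ coincides with the cyclic subgroup $n\mathbb{Z}_{+}$.

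For the forward direction I would argue by contraposition: if $\theta$ is annihilating, then $n=\theta(1_{\mathbb{Z}_{+}})=0_{\mathbb{Z}_{+}}$, so $(\mathbb{Z}_{+})\theta=\{0_{\mathbb{Z}_{+}}\}$ is the trivial group, which cannot be isomorphic to the infinite group $\mathbb{Z}_{+}$.

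For the reverse direction, suppose $\theta$ is non-annihilating. Then $n=\theta(1_{\mathbb{Z}_{+}})\neq 0_{\mathbb{Z}_{+}}$, and the map $\varphi\colon\mathbb{Z}_{+}\to n\mathbb{Z}_{+}$ given by $\varphi(k)=kn$ is a well-defined group homomorphism that is surjective by definition and injective because $\mathbb{Z}_{+}$ is torsion-free and $n\neq 0_{\mathbb{Z}_{+}}$. Hence $\varphi$ is an isomorphism, which gives $(\mathbb{Z}_{+})\theta=n\mathbb{Z}_{+}\cong\mathbb{Z}_{+}$.

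There is no real obstacle here; the lemma is a standard elementary fact about endomorphisms of $\mathbb{Z}$. The only care required is to distinguish between the additive identity $0_{\mathbb{Z}_{+}}$ of the group $\mathbb{Z}_{+}$ (i.e.\ the image of any annihilating homomorphism) and the symbol $0$ used elsewhere in the paper, which is consistent with the notation fixed at the start of Section~\ref{section-4}.
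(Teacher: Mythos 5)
Your proposal is correct and follows essentially the same route as the paper: both identify the image with the cyclic subgroup $n\mathbb{Z}_{+}$ generated by $n=\theta(1)$ and observe that for $n\neq 0_{\mathbb{Z}_{+}}$ this subgroup is isomorphic to $\mathbb{Z}_{+}$, the forward direction being immediate. Your explicit check of injectivity via torsion-freeness is just a slightly more detailed version of the paper's appeal to $\mathbb{Z}_{+}$ being free on one generator.
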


\begin{proof}
The implication $(\Rightarrow)$ is trivial.

$(\Leftarrow)$ Suppose that a homomorphism $\theta$ is non-annihilating. Since the group $\mathbb{Z}_{+}$ is generated by the element $1$ we have that $\theta(1)=n\neq 0_{\mathbb{Z}_{+}}\in\theta(\mathbb{Z}_{+})\subseteq \mathbb{Z}_{+}$ for some integer $n$. This implies that the image $\theta(\mathbb{Z}_{+})$ generated by the element $n$ as a subgroup of $\mathbb{Z}_{+}$, and since $\mathbb{Z}_{+}$ is isomorphic to the free group over a singleton set, $\theta(\mathbb{Z}_{+})$ is isomorphic to the subgroup $n\mathbb{Z}_{+}=\{nk\colon k\in \mathbb{Z}_{+}\}$ of $\mathbb{Z}_{+}$. It is obvious that $n\mathbb{Z}_{+}$ is isomorphic to $\mathbb{Z}_{+}$.
\end{proof}

\begin{lemma}\label{lemma-4.20}
If $\theta\colon \mathbb{Z}_{+}\to \mathbb{Z}_{+}$ is an arbitrary non-annihilating homomorphism then for arbitrary positive integer $i$ and an arbitrary $a\in\mathbb{Z}_{+}$ the equation $\theta^{i}(x)=a$ has at most one solution.
\end{lemma}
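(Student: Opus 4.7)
The plan is to reduce this to the elementary fact that a non-trivial group endomorphism of $\mathbb{Z}_+$ is injective, and then iterate. First I would invoke Lemma~\ref{lemma-4.19} (or rather, the proof of it): any non-annihilating homomorphism $\theta\colon \mathbb{Z}_+\to\mathbb{Z}_+$ is of the form $\theta(x)=nx$ where $n=\theta(1)\neq 0$, because $\mathbb{Z}_+$ is free on the generator $1$. Consequently, $\theta^i(x)=n^i x$ with $n^i\neq 0$.

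Next I would observe that multiplication by a nonzero integer on $\mathbb{Z}_+$ is injective: if $n^i x_1=n^i x_2=a$, then $n^i(x_1-x_2)=0$ in $\mathbb{Z}_+$, and since $\mathbb{Z}_+$ is a torsion-free group and $n^i\neq 0$, this forces $x_1-x_2=0$. Therefore the equation $(x)\theta^i=a$ has at most one solution in $\mathbb{Z}_+$, as claimed.

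There is really no obstacle here; the statement is a one-line consequence of Lemma~\ref{lemma-4.19}. An equivalent way to package the argument, avoiding an explicit description of $\theta$, is to note that any non-annihilating homomorphism between torsion-free cyclic groups has trivial kernel (its kernel is a subgroup of $\mathbb{Z}_+$ not equal to $\mathbb{Z}_+$ itself, and if it contained a nonzero element $k$ then $\theta(1)$ would have to satisfy $k\theta(1)=0$, forcing $\theta(1)=0$, a contradiction). Hence $\theta$ is injective, the composition $\theta^i$ is injective as a composition of injective maps, and injectivity is exactly the statement that $(x)\theta^i=a$ has at most one preimage.
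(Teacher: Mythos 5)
Your argument is correct and follows essentially the same route as the paper: both identify $n=(1)\theta\neq 0$, so that $\theta^i$ acts as multiplication by $n^i$, and conclude there is at most one solution; you merely make explicit (via torsion-freeness of $\mathbb{Z}_{+}$) the injectivity step that the paper treats as obvious.
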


\begin{proof}
Suppose that $\theta(1)=n$ for some $n\in\mathbb{Z}_{+}\setminus \{0_{\mathbb{Z}_{+}}\}$. Since the homomorphism $\theta\colon \mathbb{Z}_{+}\to \mathbb{Z}_{+}$ is non-annihilating, it is obvious that the equation $\theta^{i}(x)=a$ has a unique solution if and only if $a=n^ib$ for some integer $b$, and in the other case the equation $(x)\theta^{i}=a$ hasn't a solution.
\end{proof}

\begin{proposition}\label{proposition-4.21}
Let $\theta\colon \mathbb{Z}_{+}\to \mathbb{Z}_{+}$ be an arbitrary non-annihilating homomorphism. Then both equations $\alpha\cdot \chi=\beta$ and $\chi\cdot\gamma=\delta$ have finitely many solutions in \emph{$\textbf{B}(\mathbb{Z}_{+},\theta)$}.
\end{proposition}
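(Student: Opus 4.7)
The plan is to fix $\alpha=(a_1,a,a_2),\beta=(b_1,b,b_2)$ and search for $\chi=(i,s,j)$ satisfying $\alpha\cdot\chi=\beta$ by expanding the Bruck--Reilly multiplication and splitting into the two natural cases determined by the sign of $a_2-i$. Writing the product out gives
\begin{equation*}
\alpha\cdot\chi=\left(a_1+i-\min\{a_2,i\},\;\theta^{i-\min\{a_2,i\}}(a)\cdot\theta^{a_2-\min\{a_2,i\}}(s),\;a_2+j-\min\{a_2,i\}\right),
\end{equation*}
so equating coordinates with $\beta$ forces a rigid constraint on the integer coordinates of $\chi$ in each case.

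In the case $i\geqslant a_2$ I would observe that the first and third coordinates uniquely pin down $i=b_1-a_1+a_2$ and $j=b_2$ (with no solution at all if $b_1<a_1$), and then the middle coordinate becomes $\theta^{i-a_2}(a)\cdot s=b$, an equation in the group $\mathbb{Z}_+$ with the unique solution $s=-\theta^{b_1-a_1}(a)+b$. Thus this case contributes at most one solution. In the case $i<a_2$ the first coordinate forces $a_1=b_1$, the third coordinate forces $j=b_2-a_2+i$ (so we further need $i\geqslant a_2-b_2$), and $i$ is confined to the finite set $\{0,1,\ldots,a_2-1\}$. For each such $i$ the middle coordinate equation reduces to $\theta^{a_2-i}(s)=-a+b$, and here Lemma~\ref{lemma-4.20} applies directly: since $\theta$ is non-annihilating, the equation $(s)\theta^{a_2-i}=-a+b$ has at most one solution. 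Hence Case~B contributes at most $a_2$ solutions, and the total count of solutions to $\alpha\cdot\chi=\beta$ is at most $a_2+1$, in particular finite.

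The equation $\chi\cdot\gamma=\delta$ is handled by the completely symmetric analysis, splitting according to whether the second coordinate of $\chi$ dominates the first coordinate of $\gamma$ or not; the finite-index variable is then the first coordinate of $\chi$ (bounded by the first coordinate of $\gamma$), and Lemma~\ref{lemma-4.20} again reduces the group-coordinate condition to at most one solution per admissible integer pair.

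No serious obstacle is expected; the argument is essentially bookkeeping on the two components of the product formula. The only real content is the appeal to Lemma~\ref{lemma-4.20}, which is what prevents infinitely many group elements from mapping to the same element under a fixed power of $\theta$; without the non-annihilating hypothesis this step fails and indeed the conclusion becomes false.
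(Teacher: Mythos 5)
Your argument is correct and is essentially the paper's own proof: expand the Bruck--Reilly product, observe that the integer coordinates of $\chi$ are forced (with the dominated index confined to a finite range), and invoke Lemma~\ref{lemma-4.20} for the group coordinate; the paper merely splits into three cases ($m_1>n$, $m_1=n$, $m_1<n$) where you merge the latter two, and, like you, it treats $\chi\cdot\gamma=\delta$ as symmetric. One small slip in your sketch of the symmetric case: the bounded variable there is the right index $j$ of $\chi$, bounded by the left index of $\gamma$ (not the first coordinate of $\chi$), but this does not affect the argument.
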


\begin{proof}
We consider only the case of $\alpha\cdot \chi=\beta$. The proof in the other case is similar.

Put
\begin{equation*}
  \alpha=\left(n_1,z_1,m_1\right), \quad \beta=\left(n_2,z_2,m_2\right) \quad \hbox{and} \quad \chi=\left(n,z,m\right).
\end{equation*}
Then the semigroup operation of $\textbf{B}(\mathbb{Z}_{+},\theta)$ implies that
\begin{equation*}
  \left(n_2,z_2,m_2\right)=\left(n_1,z_1,m_1\right)\cdot\left(n,z,m\right)=
\left\{
  \begin{array}{cl}
    \left(n_1,z_1+\theta^{m_1-n}(z),m_1-n+m\right), & \hbox{if~} m_1>n;\\
    \left(n_1,z_1+z,m\right),                       & \hbox{if~} m_1=n;\\
    \left(n_1-m_1+n,\theta^{n-m_1}(z_1)+z,m\right), & \hbox{if~} m_1<n.
  \end{array}
\right.
\end{equation*}
Then
\begin{itemize}
  \item[$(1)$] in the case when $m_1<n$ we have that $n=n_2-n_1+m_1$, $m=m_2$ and $z=z_2-\theta^{n-m_1}(z_1)$;
  \item[$(2)$] in the case when $m_1=n$ we have that $m=m_2$ and $z=z_2-z_1$;
  \item[$(3)$] in the case when $m_1>n$ we have that $n_1=n_2$, $n-m=m_2-m_1$ and $\theta^{m_1-n}(z)=z_2-z_1$.
\end{itemize}
Now, the above three cases and Lemma~\ref{lemma-4.20} imply the statement of the proposition.
\end{proof}

We recall that a topological space $X$ is said to be \emph{Baire}, if for each sequence $A_1, A_2,\ldots, A_i,\ldots$ of open dense subsets of $X$ the intersection $\bigcap_{i=1}^\infty A_i$ is dense in $X$. \cite{Haworth-McCoy-1977}. It is well known that every \v{C}ech-complete (and hence every locally compact) space is Baire (see \cite[Section~3.9]{Engelking-1989}).

The following theorem describes Baire $T_1$-semitopological semigroups $\left(\textbf{B}(\mathbb{Z}_{+},\theta),\tau\right)$ with a non-an\-ni\-hi\-la\-ting homomorphism $\theta$.

\begin{theorem}\label{theorem-4.22}
If $\theta$ is an arbitrary non-annihilating homomorphism then every shift-continuous Baire $T_1$-topology on \emph{$\textbf{B}(\mathbb{Z}_{+},\theta)$} is discrete.
\end{theorem}

\begin{proof}
Since the space $\left(\textbf{B}(\mathbb{Z}_{+},\theta),\tau\right)$ is Baire and countable, $\left(\textbf{B}(\mathbb{Z}_{+},\theta),\tau\right)$ contains an isolated point $(i_0,z_0,j_0)$, where $i_0$ and $j_0$ are non-negative integers and $z_0\in \mathbb{Z}_{+}$. Fix an arbitrary element $(i_1,z_1,j_1)\in \textbf{B}(\mathbb{Z}_{+},\theta)$. Then
\begin{equation*}
  (i_0,z_0-z_1,i_1)(i_1,z_1,j_1)(j_1, 0_{\mathbb{Z}_{+}},j_0)=(i_0,z_0,j_0).
\end{equation*}
Proposition~\ref{proposition-4.21} implies that the equation
\begin{equation*}
  (i_0,z_0-z_1,i_1)\cdot\chi\cdot(j_1, 0_{\mathbb{Z}_{+}},j_0)=(i_0,z_0,j_0)
\end{equation*}
has a finite non-empty set of solutions. Since $\tau$ is a $T_1$-topology on $\textbf{B}(\mathbb{Z}_{+},\theta)$ the separate continuity of the semigroup operation in $\left(\textbf{B}(\mathbb{Z}_{+},\theta),\tau\right)$ implies that the map $f\colon \textbf{B}(\mathbb{Z}_{+},\theta)\to \textbf{B}(\mathbb{Z}_{+},\theta)$, $f(x)=(i_0,z_0-z_1,i_1)\cdot x\cdot(j_1, 0_{\mathbb{Z}_{+}},j_0)$ is continuous and hence $(i_1,z_1,j_1)$ is an isolated point of $\left(\textbf{B}(\mathbb{Z}_{+},\theta),\tau\right)$. Therefore, $\left(\textbf{B}(\mathbb{Z}_{+},\theta),\tau\right)$ is the discrete space.
\end{proof}

Theorem~\ref{theorem-4.22} implies the following corollary.

\begin{corollary}\label{corollary-4.23}
If $\theta$ is an arbitrary non-annihilating homomorphism then every shift-continuous locally compact $T_1$-topology on \emph{$\textbf{B}(\mathbb{Z}_{+},\theta)$} is discrete.
\end{corollary}

Now we obtain the description of Baire $T_1$-semitopological semigroups $\left(\textbf{B}(\mathbb{Z}_{+},\theta)^0,\tau\right)$ with a non-an\-ni\-hi\-la\-ting homomorphism $\theta$.

\begin{theorem}\label{theorem-4.24}
Let $\theta$ be an arbitrary non-annihilating homomorphism and $\tau$ be a shift-continuous Baire $T_1$-topology on \emph{$\textbf{B}(\mathbb{Z}_{+},\theta)^0$}. Then every non-zero element of the semigroup \emph{$\textbf{B}(\mathbb{Z}_{+}^0,\theta)^0$} is an isolated point in \emph{$\left(\textbf{B}(\mathbb{Z}_{+}^0,\theta)^0,\tau\right)$}.
\end{theorem}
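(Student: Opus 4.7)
The plan is to mimic Theorem~\ref{theorem-4.22} and spread isolatedness from a single point to every non-zero element, using only the separate continuity of multiplication together with the finiteness given by Proposition~\ref{proposition-4.21}. Since $\left(\textbf{B}(\mathbb{Z}_{+},\theta)^0,\tau\right)$ is a non-empty countable Baire $T_1$-space, the standard Baire-category argument (singletons are closed by $T_1$-ness, and if none were open then the whole space would be the countable union of nowhere dense sets, contradicting the Baire property) yields an isolated point $x_0\in\textbf{B}(\mathbb{Z}_{+},\theta)^0$. I would then split into two cases depending on whether $x_0=\textsf{0}$ or $x_0$ is a non-zero element.

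In the first case, $x_0=(i_0,z_0,j_0)$ with $i_0,j_0\in N$ and $z_0\in \mathbb{Z}_{+}$. Fix an arbitrary non-zero $(i_1,z_1,j_1)\in\textbf{B}(\mathbb{Z}_{+},\theta)$. Exactly as in the proof of Theorem~\ref{theorem-4.22}, a direct calculation in $\textbf{B}(\mathbb{Z}_{+},\theta)$ shows that $(i_0,z_0-z_1,i_1)\cdot(i_1,z_1,j_1)\cdot(j_1,0_{\mathbb{Z}_{+}},j_0)=(i_0,z_0,j_0)$. The composition $\Lambda=\lambda_{(i_0,z_0-z_1,i_1)}\circ\rho_{(j_1,0_{\mathbb{Z}_{+}},j_0)}$ is continuous by separate continuity of multiplication, so $\Lambda^{-1}(\{x_0\})$ is an open set containing $(i_1,z_1,j_1)$. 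Because $\Lambda(\textsf{0})=\textsf{0}\neq x_0$, this preimage lies entirely inside $\textbf{B}(\mathbb{Z}_{+},\theta)$; applying Proposition~\ref{proposition-4.21} successively to the equations $(i_0,z_0-z_1,i_1)\cdot y=x_0$ and $x\cdot(j_1,0_{\mathbb{Z}_{+}},j_0)=y$ shows that $\Lambda^{-1}(\{x_0\})$ is finite. A finite open subset of a $T_1$-space is discrete in the subspace topology, hence $(i_1,z_1,j_1)$ is isolated in $\left(\textbf{B}(\mathbb{Z}_{+},\theta)^0,\tau\right)$.

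In the second case, $x_0=\textsf{0}$. Then $\{\textsf{0}\}$ is open, and by $T_1$-ness also closed, so $\textbf{B}(\mathbb{Z}_{+},\theta)=\textbf{B}(\mathbb{Z}_{+},\theta)^0\setminus\{\textsf{0}\}$ is a clopen subsemigroup of $\left(\textbf{B}(\mathbb{Z}_{+},\theta)^0,\tau\right)$. An open subspace of a Baire space is Baire, so $\textbf{B}(\mathbb{Z}_{+},\theta)$ with the induced topology is a $T_1$ Baire semitopological semigroup in its own right. Theorem~\ref{theorem-4.22} then forces this induced topology to be discrete, and since $\textbf{B}(\mathbb{Z}_{+},\theta)$ is open in the ambient space, every non-zero element is also isolated in $\left(\textbf{B}(\mathbb{Z}_{+},\theta)^0,\tau\right)$.

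The only delicate point is to notice that the preimage $\Lambda^{-1}(\{x_0\})$ in the first case automatically avoids $\textsf{0}$ (because zero is an absorbing element), so Proposition~\ref{proposition-4.21} — which lives inside $\textbf{B}(\mathbb{Z}_{+},\theta)$ rather than $\textbf{B}(\mathbb{Z}_{+},\theta)^0$ — does apply; after that the $T_1$ axiom converts \emph{finite open set} into \emph{set of isolated points}. No other substantial obstacle is anticipated, and the second case reduces cleanly to Theorem~\ref{theorem-4.22} via the standard fact that open subspaces of Baire spaces are Baire.
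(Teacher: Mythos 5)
Your proposal is correct, and its second case is precisely the paper's entire proof: by $T_1$-ness the singleton $\{\textsf{0}\}$ is closed, so $\textbf{B}(\mathbb{Z}_{+},\theta)$ is an open subspace of the Baire space $\left(\textbf{B}(\mathbb{Z}_{+},\theta)^0,\tau\right)$, hence Baire (Haworth--McCoy, Proposition~1.14), hence discrete by Theorem~\ref{theorem-4.22}, and openness transfers isolatedness to the ambient space. Note, however, that this argument does not depend on where your Baire-category isolated point $x_0$ lies, so the initial application of the Baire property to the whole space and the ensuing case distinction are superfluous: you never need to know whether $\textsf{0}$ is isolated. Your first case is therefore a redundant (though valid) re-run of the translation argument of Theorem~\ref{theorem-4.22} inside the larger semigroup; it does work as you say, since $\Lambda=\lambda_{(i_0,z_0-z_1,i_1)}\circ\rho_{(j_1,0_{\mathbb{Z}_{+}},j_0)}$ is continuous, $\Lambda(\textsf{0})=\textsf{0}\neq x_0$ keeps the preimage inside $\textbf{B}(\mathbb{Z}_{+},\theta)$, and Proposition~\ref{proposition-4.21} applied successively gives finiteness, after which $T_1$ turns a finite open set into isolated points. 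So both routes are sound; the paper's (your case 2, applied unconditionally) simply buys the result in two lines by reusing Theorem~\ref{theorem-4.22} wholesale, while your case 1 shows, at the cost of duplicating that theorem's proof, that an isolated point anywhere in $\textbf{B}(\mathbb{Z}_{+},\theta)$ already spreads to all non-zero elements.
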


\begin{proof}
Since $\left(\textbf{B}(\mathbb{Z}_{+},\theta)^0,\tau\right)$ is a Baire $T_1$-space, $\textbf{B}(\mathbb{Z}_{+},\theta)$ is its open subspace, and hence by Proposition~1.14 from \cite{Haworth-McCoy-1977}, $\textbf{B}(\mathbb{Z}_{+},\theta)$ is Baire, too. Next we apply Theorem~\ref{theorem-4.22}.
\end{proof}

Theorem~\ref{theorem-4.24} implies the following corollary.

\begin{corollary}\label{corollary-4.25}
Let $\theta$ be an arbitrary non-annihilating homomorphism and $\tau$ be a shift-continuous locally compact $T_1$-topology on \emph{$\textbf{B}(\mathbb{Z}_{+},\theta)^0$}. Then every non-zero element of the semigroup \emph{$\textbf{B}(\mathbb{Z}_{+}^0,\theta)^0$} is an isolated point in \emph{$\left(\textbf{B}(\mathbb{Z}_{+}^0,\theta)^0,\tau\right)$}.
\end{corollary}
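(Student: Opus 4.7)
The plan is to derive Corollary~\ref{corollary-4.25} as a one-line specialization of Theorem~\ref{theorem-4.24}. Recall the fact explicitly invoked in the paragraph preceding Theorem~\ref{theorem-4.22}: by the Baire Category Theorem (see \cite[Section~3.9]{Engelking-1989}), every \v Cech-complete, and in particular every locally compact, space is Baire. Hence, given any locally compact $T_1$-topology $\tau$ on $\textbf{B}(\mathbb{Z}_{+},\theta)^0$ turning it into a semitopological semigroup, the space $\left(\textbf{B}(\mathbb{Z}_{+},\theta)^0,\tau\right)$ is automatically a Baire $T_1$-space, so that the hypotheses of Theorem~\ref{theorem-4.24} are satisfied. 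I would then simply cite Theorem~\ref{theorem-4.24} to conclude that every non-zero element of $\textbf{B}(\mathbb{Z}_{+},\theta)^0$ is isolated in $\left(\textbf{B}(\mathbb{Z}_{+},\theta)^0,\tau\right)$.

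There is essentially no obstacle here, and consequently no step I would expect to be hard: the entire content of the corollary is the observation that local compactness is a strictly stronger property than Baireness (within the class of $T_1$-spaces considered). All the substantive work has already been done upstream. Namely, the algebraic input is Proposition~\ref{proposition-4.21}, which shows that for a non-annihilating $\theta$ the equations $\alpha\cdot\chi=\beta$ and $\chi\cdot\gamma=\delta$ have only finitely many solutions in $\textbf{B}(\mathbb{Z}_{+},\theta)$; this is used in Theorem~\ref{theorem-4.22} together with separate continuity and the Baire property to propagate a single isolated point to all of $\textbf{B}(\mathbb{Z}_{+},\theta)$; and Theorem~\ref{theorem-4.24} then transfers the conclusion to $\textbf{B}(\mathbb{Z}_{+},\theta)^0$ by noting that $\textbf{B}(\mathbb{Z}_{+},\theta)$ is open in it and invoking Proposition~1.14 of \cite{Haworth-McCoy-1977} (open subspaces of Baire spaces are Baire). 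The proof of the corollary is therefore just the sentence ``locally compact spaces are Baire, apply Theorem~\ref{theorem-4.24}''.
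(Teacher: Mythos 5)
Your derivation is exactly the paper's: the paper proves Corollary~\ref{corollary-4.25} by the single remark that Theorem~\ref{theorem-4.24} implies it, the implicit reduction being precisely the one you spell out, namely ``locally compact $\Rightarrow$ Baire, then apply Theorem~\ref{theorem-4.24}''. So as far as the route goes, you have reproduced the paper's own argument.

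There is, however, a genuine gap in the step you make explicit, and it is worth naming because it is exactly where the $T_1$ hypothesis matters. The chain ``locally compact $\Rightarrow$ \v{C}ech-complete $\Rightarrow$ Baire'' presupposes Hausdorffness: \v{C}ech-complete spaces are Tychonoff by definition, and it is locally compact \emph{Hausdorff} spaces that are \v{C}ech-complete, whereas Corollary~\ref{corollary-4.25} assumes only $T_1$. A locally compact $T_1$ space need not be Baire: the cofinite topology on a countably infinite set is compact and $T_1$, yet it is the union of countably many nowhere dense singletons. The gap is not cosmetic in the present situation, because Proposition~\ref{proposition-4.21} shows that all translations of $\textbf{B}(\mathbb{Z}_{+},\theta)^0$ by non-zero elements have finite point-preimages (and a product of non-zero elements is non-zero), so the cofinite topology on $\textbf{B}(\mathbb{Z}_{+},\theta)^0$ is itself a compact, hence locally compact, $T_1$ shift-continuous topology in which no point is isolated. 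Consequently the reduction to Theorem~\ref{theorem-4.24} is valid only if ``locally compact'' is read with the paper's standing Hausdorff convention (or Baireness is otherwise guaranteed, e.g.\ by regularity); for a literal $T_1$ hypothesis the step ``locally compact $\Rightarrow$ Baire'' fails. This defect is inherited from the paper rather than introduced by you, but your write-up, which leans on the \v{C}ech-completeness remark, does not close it.
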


\begin{proposition}\label{proposition-4.26}
Let $\theta$ be an arbitrary non-annihilating homomorphism and  \emph{$\left(\textbf{B}(\mathbb{Z}_{+},\theta)^0,\tau\right)$} be a locally compact semitopological semigroup with non-isolated zero $\emph{\textsf{0}}$. Then the following assertions hold:
\begin{itemize}
  \item[$(i)$] every open neighbourhood $U_\emph{\textsf{0}}$ of zero in \emph{$\left(\textbf{B}(\mathbb{Z}_{+},\theta)^0,\tau\right)$} intersects infinitely many sets of the form $\mathbb{Z}_{i,j}$, $i,j\in \mathbb{N}_0$;

  \item[$(ii)$] for any non-negative integer $i_0$ every open neighbourhood $U_\emph{\textsf{0}}$ of zero in \emph{$\left(\textbf{B}(\mathbb{Z}_{+},\theta)^0,\tau\right)$} intersects almost all sets of the form $\mathbb{Z}_{i_0,j}$, $j\in \mathbb{N}_0$;

  \item[$(iii)$]  every open neighbourhood $U_\emph{\textsf{0}}$ of zero in \emph{$\left(\textbf{B}(\mathbb{Z}_{+},\theta)^0,\tau\right)$} contains almost all elements of the form $(0,0_{\mathbb{Z}_{+}},j)$, $j\in \mathbb{N}_0$;

  \item[$(iv)$] every open neighbourhood $U_\emph{\textsf{0}}$ of zero in \emph{$\left(\textbf{B}(\mathbb{Z}_{+},\theta)^0,\tau\right)$} contains almost all subsets of the form $(\mathbb{Z}_{+})_{0,j}$, $j\in \mathbb{N}_0$;

  \item[$(v)$] for any non-negative integer $i_0$ every open neighbourhood $U_\emph{\textsf{0}}$ of zero in \emph{$\left(\textbf{B}(\mathbb{Z}_{+},\theta)^0,\tau\right)$} contains almost all subsets of the form $(\mathbb{Z}_{+})_{i_0,j}$, $j\in \mathbb{N}_0$;

  \item[$(vi)$] for any non-negative integer $j_0$ every open neighbourhood $U_\emph{\textsf{0}}$ of zero in \emph{$\left(\textbf{B}(\mathbb{Z}_{+},\theta)^0,\tau\right)$} contains almost all subsets of the form $(\mathbb{Z}_{+})_{i,j_0}$, $i\in \mathbb{N}_0$;

  \item[$(vii)$] every open neighbourhood $U_\emph{\textsf{0}}$ of zero in \emph{$\left(\textbf{B}(\mathbb{Z}_{+},\theta)^0,\tau\right)$} contains almost all subsets of the form $(\mathbb{Z}_{+})_{i,j}$, $i,j\in \mathbb{N}_0$;

  \item[$(viii)$] for every open neighbourhood $U_\emph{\textsf{0}}$ of zero in \emph{$\left(\textbf{B}(\mathbb{Z}_{+},\theta)^0,\tau\right)$} the set \emph{$\textbf{B}(\mathbb{Z}_{+},\theta)^0\setminus U_\textsf{0}$} is finite.
\end{itemize}
\end{proposition}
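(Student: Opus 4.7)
The plan is to deduce the eight assertions in the stated order, each by a contradiction argument of the same type used in Proposition~\ref{proposition-4.12}. Since $\theta$ is non-annihilating and $\tau$ is Hausdorff, Corollary~\ref{corollary-4.25} applies and every non-zero element of $\textbf{B}(\mathbb{Z}_{+},\theta)^0$ is an isolated point of $\left(\textbf{B}(\mathbb{Z}_{+},\theta)^0,\tau\right)$. Consequently, each $\mathscr{H}$-class $(\mathbb{Z}_{+})_{i,j}$ is a countably infinite, open, discrete subspace whose only possible accumulation point in $\textbf{B}(\mathbb{Z}_{+},\theta)^0$ is $\textsf{0}$. Local compactness permits me to assume, in each step, that the chosen open neighbourhood $U(\textsf{0})$ has compact closure $K:=\operatorname{cl}_{\textbf{B}(\mathbb{Z}_{+},\theta)^0}(U(\textsf{0}))$; this, together with the separate continuity of the semigroup operation, is the only topological input I will use.

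The uniform strategy for each assertion is the following: negate it, use separate continuity to select an open neighbourhood $V(\textsf{0})\subseteq U(\textsf{0})$ satisfying $V(\textsf{0})\cdot a\subseteq U(\textsf{0})$ or $a\cdot V(\textsf{0})\subseteq U(\textsf{0})$ for a carefully chosen $a\in\textbf{B}(\mathbb{Z}_{+},\theta)$, and then exhibit infinitely many isolated points of $K$ that lie in $U(\textsf{0})\setminus V(\textsf{0})$. The resulting open cover of $K$ by $\{V(\textsf{0})\}$ together with these singletons then admits no finite subcover, contradicting compactness. For (i) and (ii) one takes $a=(1,0_{\mathbb{Z}_{+}},1)$ and $a=(i_0,0_{\mathbb{Z}_{+}},i_0)$ respectively, mimicking items (ii)--(v) of Proposition~\ref{proposition-4.12} almost verbatim; whether $\theta$ annihilates is irrelevant for these two steps, since only the bicyclic skeleton $\eta\colon \textbf{B}(\mathbb{Z}_{+},\theta)\to\mathscr{C}(p,q)$ is exploited.

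For the containment statements (iii)--(vii) the non-annihilating hypothesis enters essentially. For (iii) one picks $a=(0,1_{\mathbb{Z}_{+}},0)$ and observes that $a\cdot(0,z,j)=(0,1_{\mathbb{Z}_{+}}+z,j)$, so the translation $z\mapsto 1_{\mathbb{Z}_{+}}+z$ is fixed-point-free on $\mathbb{Z}_{+}$; combining this with Proposition~\ref{proposition-4.21} converts any hypothetical infinite family $(0,0_{\mathbb{Z}_{+}},j_n)\notin U(\textsf{0})$ into an infinite discrete subset of $K\setminus V(\textsf{0})$. Assertion (iv) strengthens (iii) by promoting pointwise control to control over the whole $\mathscr{H}$-class; here Lemma~\ref{lemma-4.20} guarantees that right multiplication by $(0,0_{\mathbb{Z}_{+}},1)$ has injective pre-images on each coset, so an infinite missing set in row~$0$ above a given column yields infinitely many distinct isolated missing points in $K$. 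Assertions (v) and (vi) follow from (iv) by applying the homeomorphisms $\phi^{0,j}_{i_0,j}$ and $\phi^{i,0}_{i,j_0}$ of Proposition~\ref{proposition-2.4}(i), and (vii) is obtained by combining (v) and (vi) with one further compactness argument along a diagonal sequence.

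Assertion (viii) is the culmination: by (vii) only finitely many $\mathscr{H}$-classes $(\mathbb{Z}_{+})_{i,j}$ fail to be contained in $U(\textsf{0})$, and on each such exceptional class the intersection $(\mathbb{Z}_{+})_{i,j}\setminus U(\textsf{0})$ is a discrete subset of the compactum $K$ whose only possible accumulation point $\textsf{0}$ lies in $U(\textsf{0})$, hence is finite. The step I expect to be the main obstacle is the passage from pointwise control at the points $(0,0_{\mathbb{Z}_{+}},j)$ to uniform control along whole $\mathscr{H}$-classes, that is, from (iii) to (iv); this is the only place where the non-annihilating of $\theta$ is genuinely needed, via Lemma~\ref{lemma-4.20}, because in the annihilating case the pre-image counts collapse and the analogous statement must be handled by the much more delicate compactification arguments of Proposition~\ref{proposition-4.12}.
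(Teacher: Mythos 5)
Your global strategy---first make every non-zero element isolated via Corollary~\ref{corollary-4.25}, then treat each item by contradiction with a compact closure $K=\operatorname{cl}_{\textbf{B}(\mathbb{Z}_{+},\theta)^0}(U(\textsf{0}))$, a shrunken neighbourhood $V(\textsf{0})$ obtained from separate continuity, and an infinite set of isolated points of $K$ inside $U(\textsf{0})\setminus V(\textsf{0})$---is exactly the paper's. But several of your concrete reductions would fail. The most serious is (viii): you assert that $(\mathbb{Z}_{+})_{i,j}\setminus U(\textsf{0})$ is a discrete subset of the compactum $K$, ``hence finite''; points outside $U(\textsf{0})$ need not lie in $\operatorname{cl}(U(\textsf{0}))$ at all, so this proves nothing. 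The paper instead uses (vii) to locate indices $i_0,j_0$ with $\mathbb{Z}_{i_0,j_0}\setminus U(\textsf{0})$ infinite but $\mathbb{Z}_{i_0,j_0+1}\setminus U(\textsf{0})$ finite, and pulls back along the shift $x\mapsto x\cdot(1,0_{\mathbb{Z}_{+}},0)$ to manufacture infinitely many witnesses \emph{inside} $U(\textsf{0})\setminus V(\textsf{0})\subseteq K$; some device of this kind is indispensable and is absent from your plan. The same slippage between ``missing points'' and witnesses lying in $U(\textsf{0})\setminus V(\textsf{0})$ already blurs your sketches of (iii) and (iv).

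Two further reductions do not work as stated. For (vi): item (iv) controls the row-zero classes $(\mathbb{Z}_{+})_{0,j}$ indexed by the column $j$, whereas (vi) concerns the column-$j_0$ classes $(\mathbb{Z}_{+})_{i,j_0}$ indexed by the row $i$; no single fixed translation (in particular not $\phi^{i,0}_{i,j_0}$, which needs \emph{column-zero} control as input) converts the one into the other, since the multiplier required to reach $(\mathbb{Z}_{+})_{i,j_0}$ from a row-zero class varies with the class. One must run the left--right symmetric analogues of (ii)--(v), which is what the paper's ``the proof of (vi) is similar to (v)'' conceals. For (iv): right multiplication by $(0,0_{\mathbb{Z}_{+}},1)$ never applies $\theta$, so Lemma~\ref{lemma-4.20} is irrelevant there; the actual step (as in the paper) multiplies on the left by the group elements $(0,k,0)$, $k\in\mathbb{Z}_{+}$, and then still requires a compactness argument to pass from ``for each $k$, almost all $j$'' to ``for almost all $j$, all $k$'', an interchange of quantifiers your outline does not address. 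Relatedly, your claim that non-annihilation is genuinely needed only at the step (iii)$\to$(iv) is inaccurate: it enters through Proposition~\ref{proposition-4.21} and Theorem~\ref{theorem-4.22} (equivalently Corollary~\ref{corollary-4.25}), i.e.\ in making $\textbf{B}(\mathbb{Z}_{+},\theta)$ discrete, on which every item rests, while Lemma~\ref{lemma-4.20} plays no separate role in (iii)--(vii). Finally, in (i) your multiplier $(1,0_{\mathbb{Z}_{+}},1)$ acts as the identity on every class with positive column index, so it cannot drive the contradiction; an element shifting every class, such as the paper's $(1,0_{\mathbb{Z}_{+}},0)$ or $(0,0_{\mathbb{Z}_{+}},1)$, is needed.
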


\begin{proof}
$(i)$ Suppose to the contrary that there exists an open neighbourhood $U_\textsf{0}$ of zero the compact closure in $\left(\textbf{B}(\mathbb{Z}_{+},\theta)^0,\tau\right)$ which intersects finitely many sets of the form $\mathbb{Z}_{i,j}$.  By the separate continuity of the semigroup operation of $\left(\textbf{B}(\mathbb{Z}_{+},\theta)^0,\tau\right)$ there exists an open neighbourhood $V_\textsf{0}\subseteq U_\textsf{0}$ of zero in $\left(\textbf{B}(\mathbb{Z}_{+},\theta)^0,\tau\right)$ such that  $V_\textsf{0}\cdot (1,0_{\mathbb{Z}_{+}},0)\subseteq U_\textsf{0}$. Then the equality
\begin{equation*}
  (i,z,j)\cdot(1,0_{\mathbb{Z}_{+}},0)=
\left\{
  \begin{array}{cl}
    (i+1,0_{\mathbb{Z}_{+}},0), & \hbox{if~} j=0;\\
    (i,z,j-1),                  & \hbox{if~} j\geqslant1,
  \end{array}
\right. \qquad i,j\in\mathbb{N}_0,
\end{equation*}
implies that $U_\textsf{0}\setminus V_\textsf{0}$ is an infinite subset of the compactum $\operatorname{cl}_{\left(\textbf{B}(\mathbb{Z}_{+},\theta)^0,\tau\right)}(U_\textsf{0})$ and by Theorem~\ref{theorem-4.22}, $\textbf{B}(\mathbb{Z}_{+},\theta)$ is a discrete subspace of $\left(\textbf{B}(\mathbb{Z}_{+},\theta)^0,\tau\right)$. This implies that the set $U_\textsf{0}\setminus V_\textsf{0}$ does not have an accumulation point in $\operatorname{cl}_{\left(\textbf{B}(\mathbb{Z}_{+},\theta)^0,\tau\right)}(U_\textsf{0})$, which contradicts the compactness of $\operatorname{cl}_{\left(\textbf{B}(\mathbb{Z}_{+},\theta)^0,\tau\right)}(U_\textsf{0})$.

$(ii)$ We claim that for every open neighbourhood $U_\textsf{0}$ of zero with the compact closure in $\left(\textbf{B}(\mathbb{Z}_{+},\theta)^0,\tau\right)$ there exists a non-negative integer $i_0$ such that $U_\textsf{0}$ intersects infinitely many sets of the form $\mathbb{Z}_{i_0,j}$. If we assume to the contrary then by $(i)$ there exists an increasing sequence $\left\{i_n\right\}_{n\in\mathbb{N}}$ of positive integers such that $\mathbb{Z}_{i_n,j_n}\cap U_\textsf{0}\neq\varnothing$ for some sequence of non-negative integers $\left\{j_n\right\}_{n\in\mathbb{N}}$. Then for every element $i_p$ of the sequence $\left\{i_n\right\}_{n\in\mathbb{N}}$ there exits a maximum non-negative integer $j_{i_p}$ such that $\mathbb{Z}_{i_p,j_{i_p}}\cap U_\textsf{0}\neq\varnothing$ and $\mathbb{Z}_{i_p,j_{i_p}+k}\cap U_\textsf{0}=\varnothing$ for any positive integer $k$. By the separate continuity of the semigroup operation of $\left(\textbf{B}(\mathbb{Z}_{+},\theta)^0,\tau\right)$ there exists an open neighbourhood $V_\textsf{0}\subseteq U_\textsf{0}$ of zero in $\left(\textbf{B}(\mathbb{Z}_{+},\theta)^0,\tau\right)$ such that $V_\textsf{0}\cdot (0,0_{\mathbb{Z}_{+}},1)\subseteq U_\textsf{0}$. Then the equality
\begin{equation*}
  (i,z,j)\cdot(0,0_{\mathbb{Z}_{+}},1)=(i,z,j+1), \qquad j\in\mathbb{N}_0, \; z\in \mathbb{Z}_{+},
\end{equation*}
implies that there exists a sequence of distinct points $\left\{\left(i_n,z_n,j_{i_n}\right)\right\}_{n\in\mathbb{N}}\subseteq U_\textsf{0}\setminus V_\textsf{0}$ which is a subset of the compactum $\operatorname{cl}_{\left(\textbf{B}(\mathbb{Z}_{+},\theta)^0,\tau\right)}(U_\textsf{0})$. By Corollary~\ref{corollary-4.23}, $\textbf{B}(\mathbb{Z}_{+},\theta)$ is a discrete subspace of $\left(\textbf{B}(\mathbb{Z}_{+},\theta)^0,\tau\right)$, and hence this sequence has not an accumulation point in $\operatorname{cl}_{\left(\textbf{B}(\mathbb{Z}_{+},\theta)^0,\tau\right)}(U_\textsf{0})$ which contradicts the compactness of $\operatorname{cl}_{\left(\textbf{B}(\mathbb{Z}_{+},\theta)^0,\tau\right)}(U_\textsf{0})$.

By the previous part of the proof for every open neighbourhood $U_\textsf{0}$ of zero in $\left(\textbf{B}(\mathbb{Z}_{+},\theta)^0,\tau\right)$ there exists a non-negative integer $i_0$ such that $U_\textsf{0}\cap\mathbb{Z}_{i_0,j}\neq\varnothing$ for infinitely many sets of the form $\mathbb{Z}_{i_0,j}$, $j\in\mathbb{N}\cup\{0\}$. Since the semigroup operation in $\left(\textbf{B}(\mathbb{Z}_{+},\theta)^0,\tau\right)$ is separately continuous, for any non-negative integer $i$ there exists an open neighbourhood $V_\textsf{0}\subseteq U_\textsf{0}$ of zero in $\left(\textbf{B}(\mathbb{Z}_{+},\theta)^0,\tau\right)$ such that $(i_1,0_{\mathbb{Z}_{+}},i_0)\cdot V_\textsf{0}\subseteq U_\textsf{0}$. The last inclusion implies that for every open neighbourhood $U_\textsf{0}$ of zero in $\left(\textbf{B}(\mathbb{Z}_{+},\theta)^0,\tau\right)$ and any non-negative integer $i_1$ we have that $U_\textsf{0}\cap\mathbb{Z}_{i,j}\neq\varnothing$ for infinitely many sets of the form $\mathbb{Z}_{i_1,j}$, $j\in \mathbb{N}_0$. Hence we get that $U_\textsf{0}\cap\mathbb{Z}_{0,j}\neq\varnothing$ for infinitely many sets of the form $\mathbb{Z}_{0,j}$, $j\in \mathbb{N}_0$.

Suppose that there exists an infinite increasing sequence $\left\{j_n\right\}_{n\in\mathbb{N}}$ of non-negative integers such that $U_\textsf{0}\cap\mathbb{Z}_{0,j_n}=\varnothing$ for any element $j_n$ of $\left\{j_n\right\}_{n\in\mathbb{N}}$. Without loss of generality we may assume that the sequence $\left\{j_n\right\}_{n\in\mathbb{N}}$ is maximal, i.e., $U_\textsf{0}\cap\mathbb{Z}_{0,j}\neq\varnothing$ for any non-negative integer $j\notin\left\{j_n\right\}_{n\in\mathbb{N}}$. Then there exists a subsequence $\left\{j_{n_k}\right\}_{k\in\mathbb{N}}$ in $\left\{j_n\right\}_{n\in\mathbb{N}}$ such that $U_\textsf{0}\cap\mathbb{Z}_{0,j_{n_k}}=\varnothing$ and $U_\textsf{0}\cap\mathbb{Z}_{0,j_{n_k}+1}\neq\varnothing$. The separate continuity of the semigroup operation in $\left(\textbf{B}(\mathbb{Z}_{+},\theta)^0,\tau\right)$ implies that there exists an open neighbourhood $V_\textsf{0}\subseteq U_\textsf{0}$ of zero in $\left(\textbf{B}(\mathbb{Z}_{+},\theta)^0,\tau\right)$ such that $(1,0_{\mathbb{Z}_{+}},0)\cdot V_\textsf{0}\subseteq U_\textsf{0}$. Then the equality
\begin{equation*}
(1,0_{\mathbb{Z}_{+}},0)\cdot(i,z,j)=(i+1,z,j), \qquad j\in\mathbb{N}_0, \; z\in \mathbb{Z}_{+},
\end{equation*}
implies that there exists a sequence of distinct points $\left\{\left(0,z_{n_k},j_{n_k}\right)\right\}_{n\in\mathbb{N}}\subseteq U_\textsf{0}\setminus V_\textsf{0}$ which is a subset of the compactum $\operatorname{cl}_{\left(\textbf{B}(\mathbb{Z}_{+},\theta)^0,\tau\right)}(U_\textsf{0})$. By Corollary~\ref{corollary-4.23}, $\textbf{B}(\mathbb{Z}_{+},\theta)$ is a discrete subspace of $\left(\textbf{B}(\mathbb{Z}_{+},\theta)^0,\tau\right)$ and hence this sequence has not an accumulation point in $\operatorname{cl}_{\left(\textbf{B}(\mathbb{Z}_{+},\theta)^0,\tau\right)}(U_\textsf{0})$ which contradicts the compactness of $\operatorname{cl}_{\left(\textbf{B}(\mathbb{Z}_{+},\theta)^0,\tau\right)}(U_\textsf{0})$. This implies that every open neighbourhood $U_\textsf{0}$ of zero in \emph{$\left(\textbf{B}(\mathbb{Z}_{+},\theta)^0,\tau\right)$} intersects almost all sets of the form $\mathbb{Z}_{i_0,j}$, $j\in \mathbb{N}_0$. Again, since the semigroup operation in $\left(\textbf{B}(\mathbb{Z}_{+},\theta)^0,\tau\right)$ is separately continuous, for any non-negative integer $i$ there exists an open neighbourhood $W_\textsf{0}\subseteq U_\textsf{0}$ of zero in $\left(\textbf{B}(\mathbb{Z}_{+},\theta)^0,\tau\right)$ such that $(i,0_{\mathbb{Z}_{+}},i_0)\cdot W_\textsf{0}\subseteq U_\textsf{0}$. The last inclusion implies  assertion $(ii)$.

$(iii)$ Fix an arbitrary open neighbourhood $U_\textsf{0}$ of zero  in $\left(\textbf{B}(\mathbb{Z}_{+},\theta)^0,\tau\right)$. Then by $(ii)$ the set $U_\textsf{0}$ intersects almost all sets of the form $\mathbb{Z}_{i_0,j}$, $j\in \mathbb{N}_0$. By the separate continuity of the semigroup operation of $\left(\textbf{B}(\mathbb{Z}_{+},\theta)^0,\tau\right)$ exists an open neighbourhood $V_\textsf{0}\subseteq U_\textsf{0}$ of zero in $\left(\textbf{B}(\mathbb{Z}_{+},\theta)^0,\tau\right)$ such that $(1,0_{\mathbb{Z}_{+}},1)\cdot V_\textsf{0}\subseteq U_\textsf{0}$. Since $(1,0_{\mathbb{Z}_{+}},1)\cdot(0,z,k)=(0,0_{\mathbb{Z}_{+}},k)$ the inclusion $(1,0_{\mathbb{Z}_{+}},1)\cdot V(\textsf{0})\subseteq U_\textsf{0}$ implies our assertion.

$(iv)$ Fix an arbitrary open neighbourhood $U_\textsf{0}$ of zero in $\left(\textbf{B}(\mathbb{Z}_{+},\theta)^0,\tau\right)$. The separate continuity of the semigroup operation of $\left(\textbf{B}(\mathbb{Z}_{+},\theta)^0,\tau\right)$ implies that for every element $k$ of the additive group of integers $\mathbb{Z}_{+}$ there exists an open neighbourhood $V_\textsf{0}\subseteq U_\textsf{0}$ of zero in $\left(\textbf{B}(\mathbb{Z}_{+},\theta)^0,\tau\right)$ such that  $(0,k,0)\cdot V_\textsf{0}\subseteq U_\textsf{0}$. By statement $(iii)$ the neighbourhood $U_\textsf{0}$ contains almost all elements of the form $(0,0_{\mathbb{Z}_{+}},j)$ and hence the inclusion $(0,k,0)\cdot V_\textsf{0}\subseteq U_\textsf{0}$ implies the statement.

$(v)$ Fix an arbitrary open neighbourhood $U_\textsf{0}$ of zero in $\left(\textbf{B}(\mathbb{Z}_{+},\theta)^0,\tau\right)$. By $(iv)$ the neighbourhood $U_\textsf{0}$  contains almost all subsets of the form $(\mathbb{Z}_{+})_{0,j}$, $j\in \mathbb{N}_0$. Since the semigroup operation in $\left(\textbf{B}(\mathbb{Z}_{+},\theta)^0,\tau\right)$ is separately continuous, for any non-negative integer $i$ there exists an open neighbourhood $V_\textsf{0}\subseteq U_\textsf{0}$ of zero in $\left(\textbf{B}(\mathbb{Z}_{+},\theta)^0,\tau\right)$ such that $(i_0,0_{\mathbb{Z}_{+}},0)\cdot V_\textsf{0}\subseteq U_\textsf{0}$, which implies the assertion.

The proof of assertion $(vi)$ is similar to $(v)$.

$(vii)$ If we assume to the contrary then by items $(v)$ and $(vi)$ there exist an open neighbourhood $U_\textsf{0}$ of zero with the compact closure in $\left(\textbf{B}(\mathbb{Z}_{+},\theta)^0,\tau\right)$ and an increasing sequence $\left\{(i_n,i_n)\right\}_{n\in\mathbb{N}}$ of ordered pairs of positive integers such that $\mathbb{Z}_{i_n,j_n}\cap U_\textsf{0}=\varnothing$ and $\mathbb{Z}_{i_n,j_n+k}\cap U_\textsf{0}=\varnothing$ for any positive integer $k$. The separate continuity of the semigroup operation in $\left(\textbf{B}(\mathbb{Z}_{+},\theta)^0,\tau\right)$ implies that there exists an open neighbourhood $V_\textsf{0}\subseteq U_\textsf{0}$ of zero in $\left(\textbf{B}(\mathbb{Z}_{+},\theta)^0,\tau\right)$ such that $(1,0_{\mathbb{Z}_{+}},0)\cdot V_\textsf{0}\subseteq U_\textsf{0}$. Then the equality
\begin{equation*}
(1,0_{\mathbb{Z}_{+}},0)\cdot(i,z,j)=(i+1,z,j), \qquad j\in\mathbb{N}_0, \; z\in \mathbb{Z}_{+},
\end{equation*}
implies that there exists a sequence of distinct points $\left\{\left(0,z_{n_k},j_{n_k}\right)\right\}_{n\in\mathbb{N}}\subseteq U_\textsf{0}\setminus V_\textsf{0}$ which is a subset of the compactum $\operatorname{cl}_{\left(\textbf{B}(\mathbb{Z}_{+},\theta)^0,\tau\right)}(U_\textsf{0})$. By Corollary~\ref{corollary-4.23}, $\textbf{B}(\mathbb{Z}_{+},\theta)$ is a discrete subspace of $\left(\textbf{B}(\mathbb{Z}_{+},\theta)^0,\tau\right)$ and hence this sequence has not an accumulation point in $\operatorname{cl}_{\left(\textbf{B}(\mathbb{Z}_{+},\theta)^0,\tau\right)}(U_\textsf{0})$ which contradicts the compactness of $\operatorname{cl}_{\left(\textbf{B}(\mathbb{Z}_{+},\theta)^0,\tau\right)}(U_\textsf{0})$.

$(viii)$ Suppose to the contrary that there exists an open neighbourhood $U_\textsf{0}$ of zero with the compact closure in $\left(\textbf{B}(\mathbb{Z}_{+},\theta)^0,\tau\right)$ such that the set $\textbf{B}(\mathbb{Z}_{+},\theta)^0\setminus U_\textsf{0}$ is infinite. By assertion $(vii)$ there exist non-negative integers $i_0$ and $j_0$ such that the set $\mathbb{Z}_{i_0,j_0}\setminus U_\textsf{0}$ is infinite and the set $\mathbb{Z}_{i_0,j_0+1}\setminus U_\textsf{0}$ is finite. The separate continuity of the semigroup operation in $\left(\textbf{B}(\mathbb{Z}_{+},\theta)^0,\tau\right)$ implies that there exists an open neighbourhood $V_\textsf{0}\subseteq U_\textsf{0}$ of zero in $\left(\textbf{B}(\mathbb{Z}_{+},\theta)^0,\tau\right)$ such that $(1,0_{\mathbb{Z}_{+}},0)\cdot V_\textsf{0}\subseteq U_\textsf{0}$. Then the equality
\begin{equation*}
(1,0_{\mathbb{Z}_{+}},0)\cdot(i,z,j)=(i+1,z,j), \qquad j\in\mathbb{N}_0, \; z\in \mathbb{Z}_{+},
\end{equation*}
implies that there exists a sequence of distinct points $\left\{\left(i_0,z_{n},j_0+1\right)\right\}_{n\in\mathbb{N}}\subseteq \mathbb{Z}_{i_0,j_0+1}\setminus V_\textsf{0}\subseteq U_\textsf{0}\setminus V_\textsf{0}$ which is a subset of the compactum $\operatorname{cl}_{\left(\textbf{B}(\mathbb{Z}_{+},\theta)^0,\tau\right)}(U_\textsf{0})$. By Corollary~\ref{corollary-4.23}, $\textbf{B}(\mathbb{Z}_{+},\theta)$ is a discrete subspace of $\left(\textbf{B}(\mathbb{Z}_{+},\theta)^0,\tau\right)$ and hence this sequence has not an accumulation point in $\operatorname{cl}_{\left(\textbf{B}(\mathbb{Z}_{+},\theta)^0,\tau\right)}(U_\textsf{0})$ which contradicts the compactness of $\operatorname{cl}_{\left(\textbf{B}(\mathbb{Z}_{+},\theta)^0,\tau\right)}(U_\textsf{0})$.
\end{proof}

The following example shows that the Reilly semigroup $\textbf{B}(\mathbb{Z}_{+},\theta)^0$ with a non-annihilating homomorphism $\theta$ admits the structure of a Hausdorff compact semitopological inverse semigroup with continuous inversion.

\begin{example}\label{example-4.27}
Let $\theta\colon \mathbb{Z}_{+}\to \mathbb{Z}_{+}$ be an arbitrary non-annihilating homomorphism. We define a topology $\tau_{\textsf{AC}}$ on the semigroup $\textbf{B}(\mathbb{Z}_{+},\theta)^0$ in the following way:
\begin{itemize}
  \item[$(i)$] all non-zero elements of the semigroup $\textbf{B}(\mathbb{Z}_{+},\theta)^0$ are isolated points in $\left(\textbf{B}(\mathbb{Z}_{+},\theta)^0,\tau_{\textsf{AC}}\right)$;
  \item[$(ii)$] the family $\mathscr{B}_{\textsf{AC}}(\textsf{0})=\left\{U\colon \textsf{0}\in U \hbox{~and~} \textbf{B}(\mathbb{Z}_{+},\theta)^0\setminus U \hbox{~is finite}\right\}$ is a base of the topology $\tau_{\textsf{AC}}$ at zero $\textsf{0}\in\textbf{B}(\mathbb{Z}_{+},\theta)^0$.
\end{itemize}
It is obvious that the space $\left(\textbf{B}(\mathbb{Z}_{+},\theta)^0,\tau_{\textsf{AC}}\right)$ is the one-point Alexandroff compactification of the discrete space $\textbf{B}(\mathbb{Z}_{+},\theta)$ with the remainder $\{\textsf{0}\}$. Proposition~\ref{proposition-4.21} implies that both equations $\alpha\cdot \chi=\beta$ and $\chi\cdot\gamma=\delta$ have finitely many solutions in $\textbf{B}(\mathbb{Z}_{+},\theta)$. Then for every open neighbourhood $U_\textsf{0}$ of zero in $\left(\textbf{B}(\mathbb{Z}_{+},\theta)^0,\tau_{\textsf{AC}}\right)$ and any non-zero element $(i,z,j)\in\textbf{B}(\mathbb{Z}_{+},\theta)^0$ the sets
\begin{equation*}
  \left\{x\in\textbf{B}(\mathbb{Z}_{+},\theta)^0\colon x\cdot(i,z,j)\in\textbf{B}(\mathbb{Z}_{+},\theta)^0\setminus U_\textsf{0} \right\} \quad \hbox{and}\quad \left\{x\in\textbf{B}(\mathbb{Z}_{+},\theta)^0\colon(i,z,j)\cdot x\in\textbf{B}(\mathbb{Z}_{+},\theta)^0\setminus U_\textsf{0} \right\}
\end{equation*}
are finite. Hence for any open neighbourhood $U_\textsf{0}$ of zero in $\left(\textbf{B}(\mathbb{Z}_{+},\theta)^0,\tau_{\textsf{AC}}\right)$ and any non-zero element $(i,z,j)\in\textbf{B}(\mathbb{Z}_{+},\theta)^0$ there exists a neighbourhood $V_\textsf{0}$  of zero such that
\begin{equation*}
  (i,z,j)\cdot V_\textsf{0}\subseteq U_\textsf{0} \qquad \hbox{and} \qquad V_\textsf{0}\cdot(i,z,j)\subseteq U_\textsf{0},
\end{equation*}
which implies that the semigroup operation in $\left(\textbf{B}(\mathbb{Z}_{+},\theta)^0,\tau_{\textsf{AC}}\right)$ is separately continuous. It is easy to see that the space $\left(\textbf{B}(\mathbb{Z}_{+},\theta)^0,\tau_{\textsf{AC}}\right)$ is Hausdorff and inversion in $\left(\textbf{B}(\mathbb{Z}_{+},\theta)^0,\tau_{\textsf{AC}}\right)$ is continuous.
\end{example}

Corollary~\ref{corollary-4.25}, Proposition~\ref{proposition-4.26} and Example~\ref{example-4.27} imply the following dichotomy for a locally compact $T_1$-semitopological semigroup $\left(\textbf{B}(\mathbb{Z}_{+},\theta)^0,\tau\right)$ with a non-an\-ni\-hi\-la\-ting homomorphism $\theta$.

\begin{theorem}\label{theorem-4.28}
Let $\theta$ be an arbitrary non-annihilating homomorphism and  \emph{$\left(\textbf{B}(\mathbb{Z}_{+},\theta)^0,\tau\right)$} be a locally compact $T_1$-semitopological semigroup. Then either \emph{$\left(\textbf{B}(\mathbb{Z}_{+},\theta)^0,\tau\right)$} is topologically isomorphic to $\left(\textbf{B}(\mathbb{Z}_{+},\theta)^0,\tau_{\textsf{AC}}\right)$ or $\tau$ is discrete.
\end{theorem}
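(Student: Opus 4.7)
The plan is to split into two cases according to whether the zero of $\textbf{B}(\mathbb{Z}_{+},\theta)^0$ is an isolated point of $\left(\textbf{B}(\mathbb{Z}_{+},\theta)^0,\tau\right)$, since this is the natural dividing line: the first alternative of the dichotomy gives a compact topology with a single non-isolated point, the second gives a discrete topology. By Corollary~\ref{corollary-4.25} every non-zero element of $\textbf{B}(\mathbb{Z}_{+},\theta)^0$ is already isolated in $\left(\textbf{B}(\mathbb{Z}_{+},\theta)^0,\tau\right)$, so the only question about the topology is the behaviour at $\textsf{0}$.

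First I would dispose of the easy case: if $\textsf{0}$ is isolated in $\left(\textbf{B}(\mathbb{Z}_{+},\theta)^0,\tau\right)$, then combining this with Corollary~\ref{corollary-4.25} yields that every singleton is open, so $\tau$ is the discrete topology and the second alternative holds.

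In the remaining case $\textsf{0}$ is a non-isolated point of $\left(\textbf{B}(\mathbb{Z}_{+},\theta)^0,\tau\right)$, and the plan is to show that $\tau$ coincides with the Alexandroff-type topology $\tau_{\textsf{AC}}$ of Example~\ref{example-4.27} (so the identity map is the required topological isomorphism). For the inclusion $\tau\subseteq\tau_{\textsf{AC}}$ I would invoke Proposition~\ref{proposition-4.26}$(viii)$: every open neighbourhood $U(\textsf{0})$ of zero in $\left(\textbf{B}(\mathbb{Z}_{+},\theta)^0,\tau\right)$ has finite complement in $\textbf{B}(\mathbb{Z}_{+},\theta)^0$, which together with the fact that all non-zero points are $\tau$-isolated means that every $\tau$-open set is also open in $\tau_{\textsf{AC}}$. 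For the reverse inclusion $\tau_{\textsf{AC}}\subseteq\tau$, note that Corollary~\ref{corollary-4.25} makes every finite subset of $\textbf{B}(\mathbb{Z}_{+},\theta)=\textbf{B}(\mathbb{Z}_{+},\theta)^0\setminus\{\textsf{0}\}$ closed in $\tau$; hence every cofinite set of the form $\textbf{B}(\mathbb{Z}_{+},\theta)^0\setminus F$, with $F\subseteq\textbf{B}(\mathbb{Z}_{+},\theta)$ finite, is $\tau$-open, and these together with the singletons of non-zero elements are precisely a base of $\tau_{\textsf{AC}}$.

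There is no real obstacle here, as Corollary~\ref{corollary-4.25} and Proposition~\ref{proposition-4.26} have already done all the work: the former forces the non-zero part of the semigroup to be discrete, and the latter forces the zero, when non-isolated, to have a cofinite neighbourhood base. The only thing to be a little careful about is the symmetric bookkeeping that turns the equality of the two topologies (as collections of open sets) into the statement that $\left(\textbf{B}(\mathbb{Z}_{+},\theta)^0,\tau\right)$ is topologically isomorphic to $\left(\textbf{B}(\mathbb{Z}_{+},\theta)^0,\tau_{\textsf{AC}}\right)$; here the identity map on $\textbf{B}(\mathbb{Z}_{+},\theta)^0$ is trivially a semigroup isomorphism, and the argument above shows it is a homeomorphism, which completes the proof.
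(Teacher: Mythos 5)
Your proposal is correct and follows essentially the same route as the paper, which derives Theorem~\ref{theorem-4.28} directly from Corollary~\ref{corollary-4.25}, Proposition~\ref{proposition-4.26} and Example~\ref{example-4.27}; you merely spell out the case split at $\textsf{0}$ and the two inclusions $\tau\subseteq\tau_{\textsf{AC}}$ and $\tau_{\textsf{AC}}\subseteq\tau$ that the paper leaves implicit. The only points worth tightening are that the closedness of finite sets of non-zero elements comes from the $T_1$ hypothesis rather than from Corollary~\ref{corollary-4.25} itself, and that Proposition~\ref{proposition-4.26} is stated for Hausdorff spaces, but this is harmless since $T_1$ together with the isolation of all non-zero points (Corollary~\ref{corollary-4.25}) already forces the space to be Hausdorff.
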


Theorem~\ref{theorem-4.28} implies the following corollary.

\begin{corollary}\label{corollary-4.29}
Every locally compact $T_1$-topological semigroup \emph{$\left(\textbf{B}(\mathbb{Z}_{+},\theta)^0,\tau\right)$} with a non-annihilating homomorphism $\theta$ is the  discrete space.
\end{corollary}


\section{On the closure of the discrete semigroup $\textbf{B}(\mathbb{Z}_{+},\theta)$ with a non-an\-ni\-hi\-la\-ting homomorphism $\theta$}\label{section-5}

The following proposition extends Theorem~I.3 from \cite{Eberhart-Selden-1969}.

\begin{proposition}\label{proposition-5.1}
Let $\theta$ be an arbitrary non-annihilating homomorphism and \emph{$\textbf{B}(\mathbb{Z}_{+},\theta)$} is a discrete dense subsemigroup of a semitopological monoid $S$ such that  \emph{$I=S\setminus\textbf{B}(\mathbb{Z}_{+},\theta)\neq\varnothing$}. Then $I$ is a two-sided ideal of $S$.
\end{proposition}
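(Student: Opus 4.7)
I argue by contradiction, aiming to show $s\cdot t\in I$ for every $s\in I$ and $t\in S$ (the symmetric inclusion $t\cdot s\in I$ follows by the same method with left and right shifts exchanged). Suppose to the contrary that there are $s\in I$ and $t\in S$ with $b:=s\cdot t\in\textbf{B}(\mathbb{Z}_{+},\theta)$. Since $\textbf{B}(\mathbb{Z}_{+},\theta)$ sits as a discrete subspace of $S$, I fix an open neighbourhood $W$ of $b$ with $W\cap\textbf{B}(\mathbb{Z}_{+},\theta)=\{b\}$.

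I first handle the case $t\in\textbf{B}(\mathbb{Z}_{+},\theta)$. By separate continuity the right shift $\rho_t$ is continuous, so $U:=\rho_t^{-1}(W)$ is an open neighbourhood of $s$ with $U\cdot t\subseteq W$. Since $s\in I$ and $\textbf{B}(\mathbb{Z}_{+},\theta)$ is dense, $U\cap\textbf{B}(\mathbb{Z}_{+},\theta)$ must be infinite: if it consisted of finitely many points $x_1,\ldots,x_n$, then $U\setminus\{x_1,\ldots,x_n\}$ would (by Hausdorffness) be an open neighbourhood of $s$ disjoint from $\textbf{B}(\mathbb{Z}_{+},\theta)$, contradicting density. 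For every $u$ in this infinite set, $u\cdot t\in W\cap\textbf{B}(\mathbb{Z}_{+},\theta)=\{b\}$, producing infinitely many solutions in $\textbf{B}(\mathbb{Z}_{+},\theta)$ to the equation $x\cdot t=b$ and contradicting Proposition~\ref{proposition-4.21}. This yields $I\cdot\textbf{B}(\mathbb{Z}_{+},\theta)\subseteq I$; a symmetric argument yields $\textbf{B}(\mathbb{Z}_{+},\theta)\cdot I\subseteq I$.

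The remaining case $t\in I$ is reduced to the preceding one through the inverse semigroup structure of $\textbf{B}(\mathbb{Z}_{+},\theta)$. From $b=b\cdot b^{-1}\cdot b$ I obtain $s\cdot(t\cdot b^{-1})=b\cdot b^{-1}=e$, where $e$ is an idempotent of $\textbf{B}(\mathbb{Z}_{+},\theta)$ and $t':=t\cdot b^{-1}$ lies in $I$ by the inclusion just proved; hence it suffices to contradict the existence of $s,t'\in I$ with $s\cdot t'=e$. Choose $W_e$ with $W_e\cap\textbf{B}(\mathbb{Z}_{+},\theta)=\{e\}$, and use continuity of $\lambda_s$ to find an open $V\ni t'$ with $s\cdot V\subseteq W_e$. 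By density the intersection $V\cap\textbf{B}(\mathbb{Z}_{+},\theta)$ is infinite, and for each $y$ in it the product $s\cdot y$ lies in $W_e\cap I$ (the alternative $s\cdot y=e$ would force $s\cdot y\in\textbf{B}(\mathbb{Z}_{+},\theta)$, already contradicting the first part). Right-multiplying each such relation by $y^{-1}\in\textbf{B}(\mathbb{Z}_{+},\theta)$ and using the associativity identity $(s\cdot y)\cdot y^{-1}=s\cdot(y\cdot y^{-1})=s\cdot f_y$, where $f_y=yy^{-1}$ is an idempotent of $\textbf{B}(\mathbb{Z}_{+},\theta)$, together with continuity of $\rho_{y^{-1}}$, transfers the infinite family of constraints into a one-variable equation of the form $s'\cdot x=c$ in $\textbf{B}(\mathbb{Z}_{+},\theta)$; its infinitely many solutions contradict Proposition~\ref{proposition-4.21}, completing the argument.

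The main obstacle lies in this last transfer step. Without joint continuity one cannot shrink neighbourhoods of $s$ and $t'$ simultaneously, so the passage from the two-variable information $\{s\cdot y\in W_e\cap I:y\in V\cap\textbf{B}(\mathbb{Z}_{+},\theta)\}$ to a genuine one-variable equation with infinitely many solutions in $\textbf{B}(\mathbb{Z}_{+},\theta)$ must be carried out iteratively, feeding $\lambda_s$-information into $\rho_{y^{-1}}$ one element at a time and exploiting the idempotent and inverse identities of $\textbf{B}(\mathbb{Z}_{+},\theta)$ together with the solution-counting bound of Proposition~\ref{proposition-4.21}.
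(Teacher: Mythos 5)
Your first case (one factor in $\textbf{B}(\mathbb{Z}_{+},\theta)$, the other in $I$) is correct and is essentially the paper's argument: separate continuity plus discreteness plus density produce infinitely many solutions in $\textbf{B}(\mathbb{Z}_{+},\theta)$ of a one-variable equation, contradicting Proposition~\ref{proposition-4.21}. The second case ($s,t\in I$ with $s\cdot t\in\textbf{B}(\mathbb{Z}_{+},\theta)$), however, has a genuine gap, which you yourself flag as ``the main obstacle.'' You only use that $b$ (or $e$) is isolated in the \emph{subspace} $\textbf{B}(\mathbb{Z}_{+},\theta)$, so your neighbourhood $W_e$ may contain points of $I$; consequently $s\cdot y\in W_e\cap I$ for $y\in V\cap\textbf{B}(\mathbb{Z}_{+},\theta)$ yields no contradiction, and the proposed ``transfer'' does not repair this: right-multiplying by $y^{-1}$ gives $s\cdot f_y$ with $f_y=yy^{-1}$, which is still an element of $I$ depending on $y$, so no equation $s'\cdot x=c$ with data in $\textbf{B}(\mathbb{Z}_{+},\theta)$ ever appears, and no bound from Proposition~\ref{proposition-4.21} can be invoked. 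The reduction via $b^{-1}$ and idempotents is also unnecessary machinery; it does not change the difficulty.

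The missing observation is that a discrete dense subspace of a Hausdorff space consists of points that are isolated in the \emph{ambient} space: if $W\cap\textbf{B}(\mathbb{Z}_{+},\theta)=\{w\}$ and $W\neq\{w\}$, then $W\setminus\{w\}$ is a non-empty open set missing the dense set, a contradiction; hence $\{w\}$ is open in $S$. This is exactly what the paper uses. With it, separate continuity gives an open $V\ni t$ with $s\cdot V=\{w\}$ \emph{exactly}, so any single $y\in V\cap\textbf{B}(\mathbb{Z}_{+},\theta)$ (density) satisfies $s\cdot y=w\in\textbf{B}(\mathbb{Z}_{+},\theta)$, contradicting the inclusion $I\cdot\textbf{B}(\mathbb{Z}_{+},\theta)\subseteq I$ you already proved (or, as in the paper, contradicting Proposition~\ref{proposition-4.21} after one further application of separate continuity at such a $y$). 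Incorporating this isolation fact both closes your second case and simplifies your first one.
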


\begin{proof}
Fix an arbitrary element $y\in I$. If $xy=z\notin I$ for some $x\in\textbf{B}(\mathbb{Z}_{+},\theta)$, then there exists an open neighbourhood $U(y)$ of the point $y$ in the space $S$ such that $\{x\}\cdot U(y)=\{z\}\subset\textbf{B}(\mathbb{Z}_{+},\theta)$. The neighbourhood $U(y)$ contains infinitely many elements of the semigroup $\textbf{B}(\mathbb{Z}_{+},\theta)$. This contradicts Proposition~\ref{proposition-4.21}. The obtained contradiction implies that $xy\in I$ for all $x\in \textbf{B}(\mathbb{Z}_{+},\theta)$ and $y\in I$. The proof of the statement that $yx\in I$ for all $x\in \textbf{B}(\mathbb{Z}_{+},\theta)$ and $y\in I$ is similar.

Suppose to the contrary that $xy=w\notin I$ for some $x,y\in I$. Then $w\in\textbf{B}(\mathbb{Z}_{+},\theta)$ and the separate continuity of the semigroup operation in $S$ implies that there exist open neighbourhoods $U(x)$ and $U(y)$ of the points $x$ and $y$ in $S$, respectively, such that $\{x\}\cdot U(y)=\{w\}$ and $U(x)\cdot \{y\}=\{w\}$. Since both neighbourhoods $U(x)$ and $U(y)$ contain infinitely many elements of the semigroup $\textbf{B}(\mathbb{Z}_{+},\theta)$, both equalities $\{x\}\cdot U(y)=\{w\}$ and $U(x)\cdot \{y\}=\{w\}$ contradict mentioned above Proposition~\ref{proposition-4.21}. The obtained contradiction implies that $xy\in I$.
\end{proof}

Later we need the following trivial lemma, which follows from separate continuity of the semigroup operation in semitopological semigroups.

\begin{lemma}\label{lemma-5.2}
Let $S$ be a semitopological semigroup and $I$ be a compact ideal in $S$. Then the Rees-quotient semigroup $S/I$ with the quotient topology is a Hausdorff semitopological semigroup.
\end{lemma}

\begin{theorem}\label{theorem-5.3}
Let $\theta$ be an arbitrary non-annihilating homomorphism and \emph{$(\textbf{B}(\mathbb{Z}_{+},\theta)_I,\tau)$} be a locally compact semitopological semigroup, where \emph{$\textbf{B}(\mathbb{Z}_{+},\theta)_I=\textbf{B}(\mathbb{Z}_{+},\theta)\sqcup I$} and $I$ is a compact ideal of \emph{$\textbf{B}(\mathbb{Z}_{+},\theta)_I$}. Then either \emph{$(\textbf{B}(\mathbb{Z}_{+})_I,\tau)$} is compact or $I$ is an open subset of \emph{$(\textbf{B}(\mathbb{Z}_{+})_I,\tau)$}.
\end{theorem}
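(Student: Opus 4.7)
The plan is to mirror the proof of Theorem~\ref{theorem-3.16}, replacing the dichotomy of Theorem~\ref{theorem-3.13} by the $T_1$-dichotomy of Theorem~\ref{theorem-4.28} at the crucial step. Assume toward a contradiction that the ideal $I$ is not open in $(\textbf{B}(\mathbb{Z}_+,\theta)_I,\tau)$. Because $I$ is compact and the ambient space is Hausdorff, $I$ is closed and $\textbf{B}(\mathbb{Z}_+,\theta)$ is an open (and therefore locally compact and Baire) subspace; by Theorem~\ref{theorem-4.22} the induced topology on $\textbf{B}(\mathbb{Z}_+,\theta)$ is discrete.

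Next I would form the Rees quotient $\textbf{B}(\mathbb{Z}_+,\theta)_I/I$, which is algebraically isomorphic to the Reilly semigroup $\textbf{B}(\mathbb{Z}_+,\theta)^0$ with adjoined zero $\textsf{0}=\pi(I)$. Let $\pi\colon \textbf{B}(\mathbb{Z}_+,\theta)_I\to \textbf{B}(\mathbb{Z}_+,\theta)^0$ denote the natural projection and let $\tau_{\operatorname{\textsf{q}}}$ denote the quotient topology; by Lemma~\ref{lemma-5.2} the pair $(\textbf{B}(\mathbb{Z}_+,\theta)^0,\tau_{\operatorname{\textsf{q}}})$ is a Hausdorff semitopological semigroup. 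The principal technical step is to verify that $\pi$ is a hereditarily quotient map. Since $\pi$ is bijective on $\textbf{B}(\mathbb{Z}_+,\theta)$, for every open $U\supseteq I$ we have $\pi^{-1}(\pi(U))=U$, hence $\pi(U)$ is open in $\tau_{\operatorname{\textsf{q}}}$; openness of $\pi$ at points of the discrete part $\textbf{B}(\mathbb{Z}_+,\theta)$ is automatic. This gives the criterion of \cite[2.4.F]{Engelking-1989}. Each fiber $\pi^{-1}(y)$ is compact ($\pi^{-1}(\textsf{0})=I$, singletons elsewhere), so by Din' N'e T'ong's theorem \cite{Din'-N'e-T'ong-1963} (cf.~\cite[3.7.E]{Engelking-1989}) the quotient $(\textbf{B}(\mathbb{Z}_+,\theta)^0,\tau_{\operatorname{\textsf{q}}})$ is Hausdorff and locally compact.

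Since $I$ is not open the zero $\textsf{0}$ is not isolated in $(\textbf{B}(\mathbb{Z}_+,\theta)^0,\tau_{\operatorname{\textsf{q}}})$, so $\tau_{\operatorname{\textsf{q}}}$ is not discrete, and Theorem~\ref{theorem-4.28} forces $(\textbf{B}(\mathbb{Z}_+,\theta)^0,\tau_{\operatorname{\textsf{q}}})$ to be topologically isomorphic to $(\textbf{B}(\mathbb{Z}_+,\theta)^0,\tau_{\textsf{AC}})$, which is compact by Example~\ref{example-4.27}. To transfer compactness back to the original space, I would take an arbitrary open cover $\{U_\alpha\}_{\alpha\in\mathscr{I}}$ of $(\textbf{B}(\mathbb{Z}_+,\theta)_I,\tau)$, extract a finite subfamily $U_{\alpha_1},\ldots,U_{\alpha_n}$ covering the compact $I$, and set $U=U_{\alpha_1}\cup\cdots\cup U_{\alpha_n}$. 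Then $\textbf{B}(\mathbb{Z}_+,\theta)_I\setminus U$ is closed and contained in the open set $\textbf{B}(\mathbb{Z}_+,\theta)$. Because $\pi$ restricted to $\textbf{B}(\mathbb{Z}_+,\theta)$ is a continuous open bijection, hence a homeomorphism onto its image, $\pi(\textbf{B}(\mathbb{Z}_+,\theta)_I\setminus U)$ is a closed subset of the compact space $(\textbf{B}(\mathbb{Z}_+,\theta)^0,\tau_{\textsf{AC}})$ avoiding $\textsf{0}$, hence a compact subset of the discrete part of $\tau_{\textsf{AC}}$, hence finite. Therefore $\textbf{B}(\mathbb{Z}_+,\theta)_I\setminus U$ itself is finite and is covered by finitely many $U_\alpha$, yielding the desired finite subcover.

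I expect the main obstacle to be the careful verification that $\pi$ is open (equivalently, hereditarily quotient): one must simultaneously exploit that $I$ is closed, that $\pi$ is one-to-one on the complement of $I$, and that the induced topology on $\textbf{B}(\mathbb{Z}_+,\theta)$ is discrete. Once openness is established, the remainder is essentially a direct transcription of the end of the proof of Theorem~\ref{theorem-3.16}, with Theorem~\ref{theorem-4.28} playing the role of Theorem~\ref{theorem-3.13} and Example~\ref{example-4.27} replacing the topology $\tau^{\operatorname{\textsf{Ac}}}_{\textbf{B}}$.
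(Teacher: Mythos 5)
Your proposal is correct and follows essentially the same route as the paper's proof: discreteness of $\textbf{B}(\mathbb{Z}_{+},\theta)$ via Theorem~\ref{theorem-4.22} (the paper cites Corollary~\ref{corollary-4.23}), the Rees quotient with Lemma~\ref{lemma-5.2}, openness/hereditary quotientness of $\pi$, Din' N'e T'ong's theorem, the dichotomy of Theorem~\ref{theorem-4.28} to identify the quotient with the compact semigroup of Example~\ref{example-4.27}, and then lifting compactness back by showing the complement of a finite subcover of $I$ has finite image. The only cosmetic difference is your justification that $\pi(\textbf{B}(\mathbb{Z}_{+},\theta)_I\setminus U)$ is closed via the homeomorphism onto the image; strictly one should note that this set is closed and saturated (so its image is closed because $\pi$ is a quotient map), which is exactly how the paper argues.
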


\begin{proof}
Since $I$ is a compact ideal of $\textbf{B}(\mathbb{Z}_{+},\theta)_I$, Corollary~3.3.10 of \cite{Engelking-1989} implies that $\textbf{B}(\mathbb{Z}_{+},\theta)$ is a locally compact subspace of $(\textbf{B}(\mathbb{Z}_{+},\theta)_I,\tau)$, and hence by Corollary~\ref{corollary-4.23}, $\textbf{B}(\mathbb{Z}_{+},\theta)$ is the discrete space.

Suppose that $I$ is not open. By Lemma~\ref{lemma-5.2} the Rees-quotient semigroup $\textbf{B}(\mathbb{Z}_{+},\theta)_I/I$ with the quotient topology $\tau_{\operatorname{\textsf{q}}}$ is a semitopological semigroup. Let $\pi\colon \textbf{B}(\mathbb{Z}_{+},\theta)_I\to \textbf{B}(\mathbb{Z}_{+},\theta)_I/I$ be the natural homomorphism which is a quotient map. It is obvious that the Rees-quotient semigroup $\textbf{B}(\mathbb{Z}_{+},\theta)_I/I$ is isomorphic to the semigroup $\textbf{B}(\mathbb{Z}_{+},\theta)^0$ and the image $\pi(I)$ corresponds zero $\textsf{0}$ of $\textbf{B}(\mathbb{Z}_{+},\theta)^0$. Now we shall show that the natural homomorphism $\pi\colon \textbf{B}(\mathbb{Z}_{+},\theta)_I\to \textbf{B}(\mathbb{Z}_{+},\theta)_I/I$ is a hereditarily quotient map. Since $\pi(\textbf{B}(\mathbb{Z}_{+},\theta))$ is a discrete subspace of $(\textbf{B}(\mathbb{Z}_{+},\theta)_I/I,\tau_{\operatorname{\textsf{q}}})$, it is sufficient to show that for every open neighbourhood $U(I)$ of the ideal $I$ in the space $(\textbf{B}(\mathbb{Z}_{+},\theta)_I,\tau)$ we have that the image  $\pi(U(I))$ is an open neighbourhood of the zero $\textsf{0}$ in the space $(\textbf{B}(\mathbb{Z}_{+},\theta)_I/I,\tau_{\operatorname{\textsf{q}}})$. Indeed, $\textbf{B}(\mathbb{Z}_{+},\theta)_I\setminus U(I)$ is an open-and-closed subset of $(\textbf{B}(\mathbb{Z}_{+},\theta)_I,\tau)$, because by Corollary~\ref{corollary-4.23} the elements of the semigroup $\textbf{B}(\mathbb{Z}_{+},\theta)$ are isolated point of $(\textbf{B}(\mathbb{Z}_{+},\theta)_I,\tau)$. Also, since the restriction $\pi|_{\textbf{B}(\mathbb{Z}_{+},\theta)}\colon \textbf{B}(\mathbb{Z}_{+},\theta)\to \pi(\textbf{B}(\mathbb{Z}_{+},\theta))$ of the natural homomorphism $\pi\colon \textbf{B}(\mathbb{Z}_{+},\theta)_I\to \textbf{B}(\mathbb{Z}_{+},\theta)_I/I$ is one-to-one,  $\pi(\textbf{B}(\mathbb{Z}_{+},\theta)_I\setminus U(I))$ is an open-and-closed subset of $(\textbf{B}(\mathbb{Z}_{+},\theta)_I/I,\tau_{\operatorname{\textsf{q}}})$. So $\pi(U(I))$ is an open neighbourhood of the zero $\textsf{0}$ of the semigroup $(\textbf{B}(\mathbb{Z}_{+},\theta)_I/I,\tau_{\operatorname{\textsf{q}}})$, and hence the natural homomorphism $\pi\colon \textbf{B}(\mathbb{Z}_{+},\theta)_I\to \textbf{B}(\mathbb{Z}_{+},\theta)_I/I$ is a hereditarily quotient map. Since $I$ is a compact ideal of the semitopological semigroup $(\textbf{B}(\mathbb{Z}_{+},\theta)_I,\tau)$, $\pi^{-1}(y)$ is a compact subset of $(\textbf{B}(\mathbb{Z}_{+},\theta)_I,\tau)$ for every $y\in \textbf{B}(\mathbb{Z}_{+},\theta)_I/I$. By Din' N'e T'ong's Theorem (see \cite{Din'-N'e-T'ong-1963} or \cite[3.7.E]{Engelking-1989}), $(\textbf{B}(\mathbb{Z}_{+},\theta)_I/I,\tau_{\operatorname{\textsf{q}}})$ is a Hausdorff locally compact space. If $I$ is not open then by Theorem~\ref{theorem-4.28} the semitopological semigroup $(\textbf{B}(\mathbb{Z}_{+},\theta)_I/I,\tau_{\operatorname{\textsf{q}}})$ is topologically isomorphic to $(\textbf{B}(\mathbb{Z}_{+},\theta)^0,\tau_{\operatorname{\textsf{Ac}}})$ and hence it is compact.
Next we shall prove that the space $(\textbf{B}(\mathbb{Z}_{+},\theta)_I,\tau)$ is compact. Let $\mathscr{U}=\left\{U_\alpha\colon\alpha\in\mathscr{I}\right\}$ be an arbitrary open cover of $(\textbf{B}(\mathbb{Z}_{+},\theta)_I,\tau)$. Since $I$ is compact, there exist $U_{\alpha_1},\ldots,U_{\alpha_n}\in\mathscr{U}$ such that $I\subseteq U_{\alpha_1}\cup\cdots\cup U_{\alpha_n}$. Put $U=U_{\alpha_1}\cup\cdots\cup U_{\alpha_n}$. Then $\textbf{B}(\mathbb{Z}_{+},\theta)_I\setminus U$ is an open-and-closed subset of $(\textbf{B}(\mathbb{Z}_{+},\theta)_I,\tau)$. Also, since the restriction $\pi|_{\textbf{B}(\mathbb{Z}_{+},\theta)}\colon \textbf{B}(\mathbb{Z}_{+},\theta)\to \pi(\textbf{B}(\mathbb{Z}_{+},\theta))$ of the natural homomorphism $\pi\colon \textbf{B}(\mathbb{Z}_{+},\theta)_I\to \textbf{B}(\mathbb{Z}_{+},\theta)_I/I$ is one-to-one, $\pi(\textbf{B}(\mathbb{Z}_{+},\theta)_I\setminus U(I))$ is an open-and-closed subset of $(\textbf{B}(\mathbb{Z}_{+},\theta)_I/I,\tau_{\operatorname{\textsf{q}}})$, and hence the image $\pi(\textbf{B}(\mathbb{Z}_{+},\theta)_I\setminus U(I))$ is finite, because the semigroup $(\textbf{B}(\mathbb{Z}_{+},\theta)_I/I,\tau_{\operatorname{\textsf{q}}})$ is compact. Thus, the set $\textbf{B}(\mathbb{Z}_{+},\theta)_I\setminus U$ is finite and hence the space $(\textbf{B}(\mathbb{Z}_{+},\theta)_I,\tau)$ is compact as well.
\end{proof}

Theorem~\ref{theorem-5.3} implies the following corollary.

\begin{corollary}\label{corollary-5.4}
Let $\theta$ be an arbitrary non-annihilating homomorphism and \emph{$(\textbf{B}(\mathbb{Z}_{+},\theta)_I,\tau)$} be a locally compact topological semigroup, where \emph{$\textbf{B}(\mathbb{Z}_{+},\theta)_I=\textbf{B}(\mathbb{Z}_{+},\theta)\sqcup I$} and $I$ is a compact ideal of $\textbf{B}(\mathbb{Z}_{+},\theta)_I$. Then $I$ is an open subset of \emph{$(\textbf{B}(\mathbb{Z}_{+})_I,\tau)$}.
\end{corollary}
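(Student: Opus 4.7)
The plan is to deduce Corollary~\ref{corollary-5.4} from Theorem~\ref{theorem-5.3} along the same lines used to pass from Theorem~\ref{theorem-3.16} to Corollary~\ref{corollary-3.17}. Theorem~\ref{theorem-5.3} supplies the dichotomy that $(\textbf{B}(\mathbb{Z}_{+},\theta)_I,\tau)$ is either compact as a semitopological semigroup or has $I$ open. The entire content of Corollary~\ref{corollary-5.4} is therefore the elimination of the compact alternative.

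To eliminate compactness I would exploit the copy of the bicyclic monoid living inside $\textbf{B}(\mathbb{Z}_{+},\theta)$. Since $\theta$ is a group homomorphism, $(0_{\mathbb{Z}_{+}})\theta^{k}=0_{\mathbb{Z}_{+}}$ for every non-negative integer $k$, so the subset $\{(i,0_{\mathbb{Z}_{+}},j):i,j\in N\}$ is closed under the Bruck--Reilly multiplication and the assignment $(i,0_{\mathbb{Z}_{+}},j)\mapsto q^{i}p^{j}$ is an algebraic isomorphism onto $\mathscr{C}(p,q)$. Consequently $(\textbf{B}(\mathbb{Z}_{+},\theta)_I,\tau)$ contains the bicyclic monoid as a subsemigroup, and the very same bicyclic-monoid obstruction that forced the compact alternative to fail in the passage from Theorem~\ref{theorem-3.16} to Corollary~\ref{corollary-3.17} should apply here, thereby forcing $I$ to be open.

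The main obstacle I expect is carrying this obstruction across from the topological to the semitopological setting: the classical Andersen/Anderson--Hunter--Koch theorem is formulated under joint continuity of multiplication, whereas here only separate continuity is assumed. I would address this by combining two pieces of structure already available in the proof of Theorem~\ref{theorem-5.3}. First, the discreteness of $\textbf{B}(\mathbb{Z}_{+},\theta)$ inside $(\textbf{B}(\mathbb{Z}_{+},\theta)_I,\tau)$, which is obtained by applying Corollary~\ref{corollary-4.23} to the open subspace $\textbf{B}(\mathbb{Z}_{+},\theta)=\textbf{B}(\mathbb{Z}_{+},\theta)_I\setminus I$. Second, the finite-fiber property of left and right multiplication in $\textbf{B}(\mathbb{Z}_{+},\theta)$ coming from Proposition~\ref{proposition-4.21}. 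Together these provide enough local control on the separately-continuous multiplication to mimic, in this specific Reilly setting, the contradiction that Anderson--Hunter--Koch produces in the topological category: taking an accumulation point in $I$ of a suitable sequence of idempotents $(n,0_{\mathbb{Z}_{+}},n)$ and using Proposition~\ref{proposition-4.21} to turn separate continuity into a usable finite-preimage condition, I would produce an open cover of a compact closure of a neighbourhood of $I$ with no finite subcover, exactly as in Lemma~\ref{lemma-3.10}. This would rule out the compact alternative in Theorem~\ref{theorem-5.3} and complete the proof that $I$ is open.
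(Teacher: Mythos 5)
Your reduction to Theorem~\ref{theorem-5.3} and your identification of the bicyclic submonoid $\left\{(i,0_{\mathbb{Z}_{+}},j)\colon i,j\in N\right\}$ are the right ingredients, but the step in which you propose to eliminate the compact alternative \emph{within the semitopological category} cannot be carried out, and this is a genuine gap rather than a technical difficulty to be worked around. The compact alternative of Theorem~\ref{theorem-5.3} is actually realized: Example~\ref{example-4.27} exhibits $\left(\textbf{B}(\mathbb{Z}_{+},\theta)^0,\tau_{\textsf{AC}}\right)$, the one-point Alexandroff compactification of the discrete semigroup $\textbf{B}(\mathbb{Z}_{+},\theta)$, as a Hausdorff compact (hence locally compact) semitopological semigroup in which $I=\{\textsf{0}\}$ is a compact, non-open ideal; Theorem~\ref{theorem-4.28} records precisely this dichotomy. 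Consequently no covering argument of the kind you sketch can succeed: in $\tau_{\textsf{AC}}$ every neighbourhood of $\textsf{0}$ is cofinite, so every open cover of the compact closure of such a neighbourhood trivially admits a finite subcover, the idempotents $(n,0_{\mathbb{Z}_{+}},n)$ converge to $\textsf{0}$, and Proposition~\ref{proposition-4.21} together with the discreteness of $\textbf{B}(\mathbb{Z}_{+},\theta)$ produces no contradiction --- indeed Proposition~\ref{proposition-4.21} is exactly the tool the paper uses to \emph{verify} separate continuity of this compact example, so it cannot simultaneously be the tool that rules it out.

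What this reveals is that the corollary, read literally with ``semitopological'', is contradicted by Example~\ref{example-4.27}; it is evidently intended to parallel Corollary~\ref{corollary-3.17} and Corollary~\ref{corollary-4.29} and should be read for \emph{topological} semigroups, i.e.\ with jointly continuous multiplication. Under that hypothesis the elimination of the compact horn is immediate and needs none of your machinery: $\textbf{B}(\mathbb{Z}_{+},\theta)$ contains a copy of $\mathscr{C}(p,q)$, and a compact topological semigroup contains no copy of the bicyclic monoid \cite{Anderson-Hunter-Koch-1965}, so the first alternative of Theorem~\ref{theorem-5.3} is impossible and $I$ must be open. Your instinct that the Anderson--Hunter--Koch obstruction does not transfer to separate continuity was correct; the conclusion to draw from it is not that one must work harder to transfer it, but that the compact alternative genuinely survives in the semitopological setting and the corollary only holds in the topological one.
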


\section*{Acknowledgements}

The author acknowledges Alex Ravsky and the referee for their important comments and suggestions.


\begin{thebibliography}{00}

\bibitem{Andersen-1952}
O.~Andersen,
{\em Ein Bericht \"{u}ber die Struktur abstrakter Halbgruppen},
PhD Thesis, Hamburg, 1952.

\bibitem{Anderson-Hunter-Koch-1965}
L.~W.~Anderson, R.~P.~Hunter, and R.~J.~Koch,
\emph{Some results on stability in semigroups}.
Trans. Amer. Math. Soc. {\bf 117} (1965), 521--529.

\bibitem{Arkhangelskii-1963}
A.~V.~Arkhangel'ski\v{\i},
\emph{Bicompact sets and the topology of spaces},
Dokl. Akad. Nauk SSSR \textbf{150} (1963), 9--12 (in Russian); English version in: Soviet Math. Dokl. \textbf{4} (1963), 561--564.

\bibitem{Banakh-Dimitrova-Gutik-2009}
T.~O. Banakh, S.~Dimitrova, and O.~V. Gutik,
\emph{The Rees-Suschkewitsch Theorem for simple topological semigroups},
Mat. Stud. \textbf{31}:2 (2009), 211--218.

\bibitem{Banakh-Dimitrova-Gutik-2010}
T.~Banakh, S.~Dimitrova, and O.~Gutik,
\emph{Embedding the bicyclic semigroup into countably compact topological semigroups},
Topology Appl. \textbf{157}:18 (2010), 2803--2814.

\bibitem{Bardyla-2016}
S. Bardyla,
\emph{Classifying locally compact semitopological polycyclic monoids},
Math. Bull. Shevchenko Sci. Soc. \textbf{13} (2016), 31--38.

\bibitem{Bardyla-Gutik-2016}
S. Bardyla and O. Gutik,
\emph{On a semitopological polycyclic monoid},
Algebra Discrete Math. \textbf{21}:2 (2016) 163--183.

\bibitem{Bertman-West-1976}
M.~O.~Bertman and T.~T.~West,
{\it Conditionally compact bicyclic
semitopological semigroups},
Proc. Roy. Irish Acad. {\bf A76}:21--23 (1976), 219--226.

\bibitem{Bruck-1958}
R. H. Bruck,
\emph{A survey of binary systems}, Springer, Berlin-G\"{o}ttingen-Heidelberg, VII,
Ergebn. Math. \textbf{20} (1958), 185S.

\bibitem{Carruth-Hildebrant-Koch-1983-1986}
J.~H.~Carruth, J.~A.~Hildebrant, and R.~J.~Koch,
\emph{The Theory of Topological Semigroups}, Vols I and
II, Marcell Dekker, Inc., New York and Basel, 1983 and 1986.

\bibitem{Chuchman-Gutik-2010}
I.~Chuchman and O.~Gutik,
\emph{Topological monoids of almost monotone injective co-finite partial selfmaps of the set of positive integers},
Carpathian Math. Publ. \textbf{2}:1 (2010), 119--132.

\bibitem{Chuchman-Gutik-2011}
I.~Chuchman and O.~Gutik,
\emph{On monoids of injective partial selfmaps almost everywhere the identity},
Demonstr. Math. \textbf{44}:4 (2011), 699--722.


\bibitem{Clifford-Preston-1961-1967}
A.~H.~Clifford and G.~B.~Preston,
\emph{The Algebraic Theory of Semigroups}, Vols. I and II,
Amer. Math. Soc. Surveys {\bf 7}, Providence, R.I.,  1961 and  1967.

\bibitem{Din'-N'e-T'ong-1963}
Din' N'e T'ong,
\emph{Preclosed mappings and A.~D.~Ta{\u{\i}}manov's theorem},
Dokl. Akad. Nauk SSSR \textbf{152} (1963), 525--528 (in Russian); English version in: Soviet Math. Dokl. \textbf{4} (1963), 1335--1338.


\bibitem{Eberhart-Selden-1969}
C.~Eberhart and J.~Selden,
{\em On the closure of the bicyclic semigroup},
Trans. Amer. Math. Soc. {\bf 144} (1969), 115--126.


\bibitem{Ellis-1957}
R. Ellis,
\emph{Locally compact transformation groups},
Duke Math. J. \textbf{24}:2 (1957), 119--125.


\bibitem{Engelking-1989}
R.~Engelking,
\emph{General Topology},
2nd ed., Heldermann, Berlin, 1989.

\bibitem{Fihel-Gutik-2011}
I. Fihel and O. Gutik,
\emph{On the closure of the extended bicyclic semigroup},
Carpathian Math. Publ. \textbf{3}:2 (2011) 131--157.

\bibitem{Gutik-1994}
O. V. Gutik,
\emph{Embedding of topological semigroups into simple semigroups},
Mat. Stud. \textbf{3} (1994), 10--14  (in Russian).

\bibitem{Gutik-1996}
O. V. Gutik,
\emph{Any topological semigroup topologically isomorphically embeds into a simple path-connected topological semigroup},
Algebra and Topology, Lviv Univ. Press (1996), 65--73 (in Ukrainian).

\bibitem{Gutik-1997}
O. V. Gutik,
\emph{On a coarsening of a direct sum topology on the Bruck semigroup},
Visn. L'viv. Univ., Ser. Mekh.-Mat. \textbf{47} (1997), 17--21 (in Ukrainian).

\bibitem{Gutik-2015}
O. Gutik,
\emph{On the dichotomy of a locally compact semitopological bicyclic monoid with adjoined zero},
Visn. L'viv. Univ., Ser. Mekh.-Mat. \textbf{80} (2015), 33--41.

\bibitem{Gutik-Maksymyk-2016}
O. Gutik and K. Maksymyk,
\emph{On semitopological interassociates of the bicyclic monoid},
Visn. L'viv. Univ., Ser. Mekh.-Mat. \textbf{82 }(2016), 98--108.

\bibitem{Gutik-Maksymyk-2016??}
O. Gutik and K. Maksymyk,
\emph{On semitopological bicyclic extensions of linearly ordered groups},
Mat. Metody Fiz.-Mekh. Polya 59:4 (2016) (to appear) (arXiv:1608.00959).





\bibitem{Gutik-Pavlyk-2009}
O. V. Gutik and K. P. Pavlyk,
\emph{Bruck-Reilly extension of a semitopological semigroups},
Applied Problems of Mech. and Math. \textbf{7} (2009), 66--72.

\bibitem{Gutik-Pozdnyakova-2014}
O.~Gutik and I.~Pozdnyakova,
\emph{On monoids of monotone injective partial selfmaps of $L_n\times_{\operatorname{lex}}\mathbb{Z}$ with co-finite domains and images},
Algebra Discrete Math. \textbf{17}:2 (2014) 256--279.


\bibitem{Gutik-Repovs-2007}
O.~Gutik and D.~Repov\v{s},
\emph{On countably compact $0$-simple topological inverse semigroups},
Semigroup Forum \textbf{75}:2 (2007), 464--469.

\bibitem{Gutik-Repovs-2011}
O.~Gutik, D.~Repov\v{s},
\emph{Topological monoids of monotone, injective partial selfmaps of $\mathbb{N}$ having cofinite domain and image},
Stud. Sci. Math. Hungar. \textbf{48}:3 (2011), 342--353.

\bibitem{Gutik-Repovs-2012}
O.~Gutik, D.~Repov\v{s},
\emph{On monoids of injective partial selfmaps of integers with cofinite domains and images},
Georgian Math. J. \textbf{19}:3 (2012), 511--532.


\bibitem{Haworth-McCoy-1977}
R.~C.~Haworth and R.~A.~McCoy,
\emph{Baire spaces},
Diss. Math.  \textbf{141}  (1977), 73 pp.


\bibitem{Hildebrant-Koch-1988}
J.~A.~Hildebrant and R.~J.~Koch,
{\it Swelling actions of $\Gamma$-compact semigroups}, Semigroup
Forum {\bf 33} (1986), 65--85.

\bibitem{Hofmann-1988}
K.~H.~Hofmann,
\emph{Locally compact semigroups in which a subgroup with compact complement is dense},
Trans. Amer. Math. Soc. \textbf{106}:1 (1963), 19--51.

\bibitem{Lallement-Petrich-1969}
G. Lallement and M. Petrich,
\emph{A generalization of the Rees theorem in semigroups},
Acta Sci. Math. (Szeged)  \textbf{30}:1--2 (1969),  113--132.


\bibitem{McAlister-1974}
D. B. McAlister,
\emph{$0$-bisimple inverse semigroups},
Proc. London Math. Soc. (3) \textbf{28} (1974), 193--221.

\bibitem{McDougle-1958}
P.~McDougle,
\emph{A theorem on quasi-compact mappings},
Proc. Amer. Math. Soc. \textbf{9}:3 (1958), 474--477.

\bibitem{McDougle-1959}
P.~McDougle,
\emph{Mapping and space relations},
Proc. Amer. Math. Soc. \textbf{10}:2 (1959), 320--323.

\bibitem{Mesyan-Mitchell-Morayne-Peresse-2016}
Z. Mesyan, J. D. Mitchell, M. Morayne, and Y. H. P\'{e}resse,
\emph{Topological graph inverse semigroups},
Topology Appl. \textbf{208} (2016), 106--126.


\bibitem{Moore-1925}
R.~L.~Moore,
\emph{Concerning upper semi-continuous collections of continua},
Trans. Amer. Math. Soc. \textbf{27} (1925), 416--428.

\bibitem{Munn-1970}
W. D. Munn,
\emph{$0$-bisimple inverse semigroups},
J. Algebra \textbf{15}:4 (1970), 570--588.

\bibitem{Munn-Reilly-1966}
W. D. Munn and N. R. Reilly,
\emph{Congruences on a bisimple $\omega$-semigroup},
Proc. Glasg. Math. Assoc. \textbf{7} (1966), 184--192.

\bibitem{Petrich-1984}
M.~Petrich,
\emph{Inverse Semigroups},
John Wiley $\&$ Sons, New York, 1984.

\bibitem{Reilly-1966}
N. R. Reilly,
\emph{Bisimple  $\omega$-semigroups},
Proc. Glasgow Math. Assoc. \textbf{7} (1966), 160--169.

\bibitem{Ruppert-1984}
W.~Ruppert,
\emph{Compact Semitopological Semigroups: An Intrinsic Theory},
Lect. Notes Math., \textbf{1079}, Springer, Berlin, 1984.

\bibitem{Selden-1975}
A. A. Selden,
\emph{Bisimple $\omega$-semigroups in the locally compact setting},
Bogazici Univ. J. Sci. Math. \textbf{3} (1975),  15--77.


\bibitem{Selden-1976}
A. A. Selden,
\emph{On the closure of bisimple $\omega$-semigroup},
Semigroup Forum \textbf{12} (1976), 373--379.

\bibitem{Selden-1977}
A. A. Selden,
\emph{The kernel of the determining endomorphism of a bisimple $\omega$-semigroup},
Semigroup Forum \textbf{14}:3 (1977), 265--271.

\bibitem{Vainstein-1947}
I.~A.~Va{\u{\i}}n\v{s}te{\u{\i}}n,
\emph{On closed mappings of metrc spaces},
Dokl. Akad. Nauk SSSR \textbf{57} (1947), 319--321 (in Russian).

\bibitem{Warne-1966}
R. J. Warne,
\emph{A class of bisimple inverse semigroups},
Pacif. J. Math. \textbf{18} (1966), 563--577.

\end{thebibliography}
\end{document}